\let\injlim\varinjlim
\let\projlim\varprojlim
\crefname{equation}{}{}
\crefname{enumi}{}{}
\numberwithin{equation}{section}
\theoremstyle{plain}
\newtheorem{Theorem}{Theorem}
\renewcommand\csname theTheorem\endcsname{\Alph{Theorem}}
\newtheorem{theorem}[equation]{Theorem}
\newtheorem{proposition}[equation]{Proposition}
\newtheorem{lemma}[equation]{Lemma}
\newtheorem{corollary}[equation]{Corollary}
\theoremstyle{definition}
\newtheorem{definition}[equation]{Definition}
\newtheorem{example}[equation]{Example}
\newtheorem{question}[equation]{Question}
\theoremstyle{remark}
\newtheorem{remark}[equation]{Remark}
\newtheorem{slogan}[equation]{Slogan}
\newcommand{\ZZ}{\mathbf{Z}}
\let\SS\relax
\newcommand{\SS}{\mathbf{S}}
\newcommand{\DD}{\mathbb{D}}
\newcommand{\R}{\textnormal{R}}
\newcommand{\fin}{\textnormal{fin}}
\newcommand{\cpt}{\textnormal{cpt}}
\newcommand{\st}{\textnormal{st}}
\newcommand{\Alex}{\operatorname{Alex}}
\newcommand{\Fun}{\operatorname{Fun}}
\newcommand{\Map}{\operatorname{Map}}
\newcommand{\Sh}{\operatorname{Sh}}
\newcommand{\Shv}{\operatorname{Shv}}
\newcommand{\PShv}{\operatorname{PShv}}
\newcommand{\cShv}{\operatorname{cShv}}
\newcommand{\Down}{\operatorname{Down}}
\newcommand{\Sub}{\operatorname{Sub}}
\newcommand{\Pow}{\operatorname{P}}
\newcommand{\D}{\operatorname{D}}
\newcommand{\Ord}{\operatorname{\Delta}}
\newcommand{\cofib}{\operatorname{cofib}}
\newcommand{\fib}{\operatorname{fib}}
\newcommand{\id}{\operatorname{id}}
\newcommand{\op}{\operatorname{op}}
\newcommand{\pr}{\operatorname{pr}}
\newcommand{\X}{\text{--}}
\newcommand{\unit}{\mathbf{1}}
\newcommand{\cat}[1]{\mathcal{#1}}
\newcommand{\Cat}[1]{\mathsf{#1}}
\let\autocite\cite
\title{Posets for which Verdier duality holds}
\author{Ko Aoki}
\address{
  Max Planck Institute for Mathematics,
  Vivatsgasse 7, 53111 Bonn, Germany
}
\email{aoki@mpim-bonn.mpg.de}
\date{\today}
\begin{document}

\begin{abstract}
  We discuss
  two known sheaf-cosheaf duality theorems:
  Curry's for the face posets of finite regular CW complexes
  and
  Lurie's for compact Hausdorff spaces,
  i.e., covariant Verdier duality.
  We provide a uniform formulation for them and prove
  their generalizations.
  Our version of the former works over the sphere spectrum
  and for more general finite posets,
  which we characterize in terms of the Gorenstein* condition.
  Our version of the latter
  says that the stabilization of a proper separated \(\infty\)-topos
  is rigid in the sense of Gaitsgory.
  As an application,
  for stratified topological spaces,
  we clarify the relation between these two duality equivalences.
\end{abstract}

\maketitle
\setcounter{tocdepth}{1}
\tableofcontents

\section{Introduction}\label{s-intro}

For the face poset~\(P\) of a locally finite regular CW complex,
Curry proved
in \autocite[Theorem~7.7]{Curry18} that
a canonical equivalence
\begin{equation}
  \label{e-26eeba67}
  \DD\colon\mathrm{h}{\D^{\textnormal{b}}(\Fun(P,\Cat{Vect}))}
  \longrightarrow\mathrm{h}{\D^{\textnormal{b}}(\Fun(P^{\op},\Cat{Vect}))}
\end{equation}
between triangulated categories\footnote{
  Here \(\mathrm{h}\) denotes the underlying triangulated category
  of a stable \(\infty\)-category.
} exists, where \(\Cat{Vect}\) denotes the category of vector spaces over a field.
Note that the polysimplicial case was proven before by Schneider
in \autocite[Proposition~2]{Schneider98}.
This leads to the following:

\begin{question}\label{a27f3b95f9}
  Under what conditions on~\(P\) do we have such a duality equivalence?
\end{question}

To answer this question,
we must give a definition of ``such a duality equivalence''
so that the desired equivalence is not an extra datum
but a property.

\begin{example}\label{dd32f45da1}
  Let \(P\) be the poset
  generated by the relations \(0\leq1\leq2\)
  and \(0\leq1'\leq2\) on the set \(\{0,1,1',2\}\).
  An isomorphism \(P\simeq P^{\op}\)
  gives an equivalence \(\Fun(P,\Cat{Sp})\simeq\Fun(P^{\op},\Cat{Sp})\),
  but the two isomorphisms give different equivalences.
  We can also use the suspension functor to get many more equivalences.
\end{example}

Curry's proof rules out this example,
but his argument depends
on a somewhat arbitrary choice of a dualizing complex.

To get some insights,
let us look at a similar equivalence in general topology.
Let \(X\) be a locally compact Hausdorff space.
In \autocite[Section~5.5.5]{LurieHA},
Lurie states Verdier duality
as a canonical equivalence
\begin{equation*}
  \DD\colon\Shv_{\Cat{Sp}}(X)\longrightarrow\cShv_{\Cat{Sp}}(X)
\end{equation*}
between the \(\infty\)-categories
of spectrum-valued sheaves and cosheaves.
The original construction is complicated, but as
we see in \cref{s-ch},
a simpler explanation exists:
We assume that \(X\) is compact for simplicity
and write
\(p\colon X\to{*}\) and
\(d\colon X\to X\times X\)
for the projection and the diagonal,
respectively.
Then the composites
\begin{gather}
  \label{e-e91f4124}
  \phantom,
  \Cat{Sp}
  \xrightarrow{p^*}\Shv_{\Cat{Sp}}(X)
  \xrightarrow{d_*}\Shv_{\Cat{Sp}}(X\times X)
  \simeq\Shv_{\Cat{Sp}}(X)\otimes\Shv_{\Cat{Sp}}(X),\\
  \label{e-e94d496b}
  \Shv_{\Cat{Sp}}(X)\otimes\Shv_{\Cat{Sp}}(X)
  \simeq\Shv_{\Cat{Sp}}(X\times X)
  \xrightarrow{d^*}\Shv_{\Cat{Sp}}(X)
  \xrightarrow{p_*}\Cat{Sp}
\end{gather}
constitute a duality datum for the self-duality
in~\(\Cat{Pr}_{\st}\),
the symmetric monoidal \(\infty\)-category
of presentable stable \(\infty\)-categories.\footnote{
  We prove in \cref{ss-ps} that \(\Shv_{\Cat{Sp}}(X)\) is rigid,
  which is stronger than this claim.
}
Now let us get back to our problem and
take a finite poset~\(P\).
It is known (see \cref{ss-alex}) that
when \(X\) is its Alexandroff space \(\Alex(P)\)
(see \cref{e407564b20}),
we have \(\Shv_{\Cat{Sp}}(X)\simeq\Fun(P,\Cat{Sp})\)
and \(\cShv_{\Cat{Sp}}(X)\simeq\Fun(P^{\op},\Cat{Sp})\).
Moreover, in this case,
both \cref{e-e91f4124,e-e94d496b}
are in \(\Cat{Pr}_{\st}\).
However, these do not form a duality datum unless \(P\) is discrete:

\begin{example}\label{3d8480c654}
  Assume \(X=\Alex(P)\) for a finite nondiscrete poset~\(P\).
  Pick an element \(p\in P\) that is minimal among
  nonminimal elements.
  Let \(F\colon P\to\Cat{Sp}\) be the extension
  of \(\SS\) by zero along \(\{p\}\hookrightarrow P\).
  Then the value at~\(p\) of the image of~\(F\) under
  \begin{equation*}
    \Shv_{\Cat{Sp}}(X)
    \xrightarrow{\id\otimes\text{\cref{e-e91f4124}}}
    \Shv_{\Cat{Sp}}(X)
    \otimes\Shv_{\Cat{Sp}}(X)
    \otimes\Shv_{\Cat{Sp}}(X)
    \xrightarrow{\cref{e-e94d496b}\otimes{\id}}
    \Shv_{\Cat{Sp}}(X)
  \end{equation*}
  is given by the limit \(\projlim(F)\),
  which is a coproduct of \((\#(P_{<p})-1)\)
  copies of \(\Sigma^{-1}\SS\) and
  never equivalent to~\(F(p)=\SS\).
\end{example}

Therefore we give up to use \cref{e-e91f4124}
and consider if the composite
\begin{equation}
  \label{e-6dded2c2}
  \phantom,
  \Shv_{\Cat{Sp}}(X)
  \xrightarrow{\X\otimes\Shv_{\Cat{Sp}}(X)}
  [\Shv_{\Cat{Sp}}(X),\Shv_{\Cat{Sp}}(X)\otimes\Shv_{\Cat{Sp}}(X)]
  \xrightarrow{[\id,\text{\cref{e-e94d496b}}]}
  [\Shv_{\Cat{Sp}}(X),\Cat{Sp}],
\end{equation}
where \([\X,\X]\) denotes the internal mapping object in \(\Cat{Pr}_{\st}\),
is an equivalence.
Thus we reach the following:

\begin{definition}\label{89fa6d69dd}
  We call a finite poset \(P\) \emph{Verdier}
  if the pair \((\Fun(P,\Cat{Sp}),\Gamma)\)
  is a commutative Frobenius algebra
  (cf. \cref{ss-dualizable}) in \(\Cat{Pr}_{\st}\).\footnote{
    This condition is a~priori different
    from the requirement that
    \cref{e-6dded2c2} is an equivalence,
    but it turns out to be equivalent by \cref{a3ad5064a0}
    as \(\Shv_{\Cat{Sp}}(X)\simeq\Fun(P,\Cat{Sp})\)
    is compactly generated in this case.
  }
\end{definition}

In other words, \(P\) is Verdier
if and only if
\cref{e-e94d496b} is a ``perfect pairing''.
Our main result rephrases the Verdier property
in terms of the Gorenstein* property,
a concept used in combinatorial commutative algebra:

\begin{Theorem}\label{main}
  For a finite poset~\(P\),
  the following are equivalent:
  \begin{enumerate}[label=(\roman*)]
    \item
      \label{i-verdier}
      The poset \(P\) is Verdier.
    \item
      \label{i-gorenstein}
      For each \(p\in P\),
      the full subposet \(P_{<p}\) is Gorenstein* (over~\(\ZZ\)),
      i.e., its geometric realization
      is a generalized homology sphere
      (see \cref{ss-g}).
    \item
      \label{i-vanishing}
      For each \(p<q\) in~\(P\),
      the limit of \(\ZZ_{[p,q]}\),
      the extension of the constant functor~\(\ZZ\)
      by zero along \([p,q]\hookrightarrow P\)
      vanishes.
      (Or equivalently, the limit of \(\SS_{[p,q]}\)
      vanishes; see \cref{sz}.)
  \end{enumerate}
\end{Theorem}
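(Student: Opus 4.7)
The plan is to establish (iii) $\Leftrightarrow$ (ii) and (iii) $\Leftrightarrow$ (i) separately, treating (iii) as the combinatorial hub connecting the topological and categorical conditions.

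For (iii) $\Leftrightarrow$ (ii), I would identify $\projlim_P \SS_{[p,q]}$ (or equivalently $\projlim_P \ZZ_{[p,q]}$ via \cref{sz}) with a shift of the reduced cohomology of the order complex of the open interval $(p, q) := \{r \in P : p < r < q\}$. The identification is obtained by open-closed cofiber sequences such as $\SS_{\{q\}} \to \SS_{[p,q]} \to \SS_{[p,q)}$, which split off the extremal strata, combined with induction on the length of $[p, q]$. The Gorenstein* condition in (ii) states that $|N(P_{<p})|$ is a generalized $\ZZ$-homology sphere for each $p$; by the classical local (link-based) characterization of Gorenstein* in combinatorial commutative algebra, this is equivalent to $|N((p, q))|$ being a homology sphere of the expected dimension for each $p < q$, which via the identification is precisely (iii).

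For (iii) $\Leftrightarrow$ (i), I would unpack the commutative Frobenius algebra condition: $(\Fun(P, \Cat{Sp}), \Gamma)$ is Frobenius iff the pairing map $\phi\colon \Fun(P, \Cat{Sp}) \to \Fun(P, \Cat{Sp})^\vee$, $G \mapsto \Gamma(G \otimes -)$, is an equivalence in $\Cat{Pr}_{\st}$. Since $\Fun(P, \Cat{Sp})$ is compactly generated (for finite $P$, e.g.\ by the representables $\SS_{\geq p}$), this equivalence is detected on compact objects, and amounts to producing a Frobenius-dual family: compact objects $\{E_p\}$ with $\Gamma(\SS_{\geq p} \otimes E_q) \simeq \delta_{pq}\SS$. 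One can build such a family inductively from the extension-by-zero functors attached to subposets of $P$; the obstructions to these candidates constituting a genuine dual family, expressed through cofiber sequences whose associated graded pieces are the interval functors $\SS_{[p', q']}$, vanish precisely when $\Gamma(\SS_{[p, q]}) = 0$ for each $p < q$, which is exactly (iii).

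The main obstacle, I expect, is the direction (iii) $\Rightarrow$ (i): turning the list of local vanishing statements into a global equivalence of presentable stable $\infty$-categories. The inductive construction of the dual family must be made coherent across all intervals, so that the vanishing supplied by (iii) kills the obstructions at each stage and the resulting compact objects genuinely span $\Fun(P, \Cat{Sp})^\omega$; only then does the Frobenius map $\phi$ become an equivalence after passing to Ind-completions.
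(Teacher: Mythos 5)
Your central step, identifying \(\projlim_P\SS_{[p,q]}\) with a shift of the reduced cohomology of the open interval \((p,q)\), is incorrect. Since \(\SS_{[p,q]}\) is right Kan extended from \(P_{\leq q}\) and there is the extension by zero along the cosieve \([p,q]\hookrightarrow P_{\leq q}\), a recollement fiber sequence gives
\begin{equation*}
  \Gamma(P;\SS_{[p,q]})\simeq\fib\bigl(\Gamma(P_{\leq q};\SS)\to\Gamma(P_{\leq q}\setminus[p,q];\SS)\bigr)\simeq\Sigma^{-1}\widetilde{\Gamma}\bigl(P_{<q}\setminus P_{\geq p};\SS\bigr),
\end{equation*}
where \(\widetilde{\Gamma}\) denotes reduced cohomology. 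The poset \(P_{<q}\setminus P_{\geq p}\) (the complement of an "open star") is not \((p,q)=P_{<q}\cap P_{>p}\) (the "link"), and they need not be cohomologically equivalent even in the Gorenstein* case. For instance, for the face poset of the \(1\)-simplex with \(p\) a vertex and \(q\) the edge, \((p,q)=\emptyset\) has \(\widetilde{\Gamma}=\Sigma^{-1}\ZZ\neq0\), while \(P_{<q}\setminus P_{\geq p}\) is a point, so \(\Gamma(P;\ZZ_{[p,q]})=0\) as condition~\cref{i-vanishing} requires. Thus your claimed combinatorial equivalence \(\text{\cref{i-vanishing}}\Leftrightarrow\text{\cref{i-gorenstein}}\) does not reduce to the local characterization of Gorenstein* posets by links. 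What is actually needed is a Poincaré/Lefschetz duality argument relating the complement of a star to the ambient homology sphere, which is a genuine topological input. The paper proves \(\text{\cref{i-gorenstein}}\Rightarrow\text{\cref{i-vanishing}}\) precisely by such a Lefschetz duality argument (\cref{271a46a032}), and it does \emph{not} prove the converse \(\text{\cref{i-vanishing}}\Rightarrow\text{\cref{i-gorenstein}}\) directly: that direction passes through \cref{i-verdier}, using a nontrivial "localization of the Verdier property" (\cref{b644f9030c}, \(P\) Verdier \(\Rightarrow\) \(P_{>p}\) Verdier) to reduce to the cohomology of \(P_{<q}\).

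On \(\text{\cref{i-vanishing}}\Leftrightarrow\text{\cref{i-verdier}}\), your idea of detecting the equivalence on compact objects by exhibiting a "Frobenius-dual family" is in the right spirit, but as stated it has no mechanism to guarantee that the constructed objects generate the target, nor to organize the coherence you flag as the "main obstacle." The paper resolves both issues at once by peeling off a maximal element \(m\in P\) and using the two recollements \((i_*,j_*)\) and \((j_!,i_*)\) from \cref{a9ca3865c3} together with \cref{2953dbadf2}: the inductive step reduces to a single computation, that \(\Gamma(P;\SS_{[p,m]})\) vanishes for \(p<m\) and equals a desuspension of \(\SS\) for \(p=m\), which is precisely where \cref{i-vanishing} enters. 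You would need to recover this inductive recollement structure, or an equivalent device, to make the dual-family argument rigorous.
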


The existence
of an equivalence \(\Fun(P,\Cat{Sp})\simeq\Fun(P^{\op},\Cat{Sp})\)
for~\(P\) satisfying \cref{i-gorenstein}
or its variant where \(\Cat{Sp}\) is replaced by \(\D(\ZZ)\)
may not surprise experts.
What is novel is the formulation itself,
with which an if-and-only-if statement becomes possible.

\begin{corollary}\label{cc3fc211f5}
  A finite poset~\(P\) is Gorenstein* if and only if
  \(P^{\triangleright}\) is Verdier.
\end{corollary}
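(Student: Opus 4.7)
The plan is to apply \cref{main} to the augmented poset $P^{\triangleright}$ and then match the resulting condition with the bare Gorenstein* property of $P$. Write $\infty$ for the added terminal element. Unpacking condition \cref{i-gorenstein} of \cref{main} for $P^{\triangleright}$ amounts to asking that $(P^{\triangleright})_{<q}$ be Gorenstein* for every $q\in P^{\triangleright}$. Splitting on $q$: for $q=\infty$ we get $(P^{\triangleright})_{<\infty}=P$, while for $q\in P$ we get $(P^{\triangleright})_{<q}=P_{<q}$. Thus $P^{\triangleright}$ is Verdier if and only if both $P$ itself and every $P_{<q}$ (for $q\in P$) are Gorenstein*.

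The reverse direction of the corollary is then immediate by specializing to $q=\infty$. For the forward direction, it remains to verify that if $P$ is Gorenstein* then so is each $P_{<q}$. Geometrically, the link of the vertex $q$ in the order complex $\Delta(P)$ is the join $|P_{<q}|\ast|P_{>q}|$, so the claim reduces to two standard facts about generalized homology spheres: the link of a vertex in such a sphere is again a generalized homology sphere, and a join $X\ast Y$ is a generalized homology sphere only if both factors are, by the Künneth formula for joins. I expect both ingredients to be developed as part of the basic material of \cref{ss-g}.

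The main theorem thus carries all the real content; the sole step beyond its mechanical application is the link–factor inheritance of the Gorenstein* property under $P\rightsquigarrow P_{<q}$. This is classical combinatorial topology, and once \cref{ss-g} has fixed conventions (in particular the convention $S^{-1}=\emptyset$ that handles minimal or maximal $q$, where one of $P_{<q}$, $P_{>q}$ is empty), it should be routine. The only mild subtlety I foresee is keeping track of these boundary dimensions so that the join decomposition really implies each factor is a sphere of the expected dimension.
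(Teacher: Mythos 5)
Your proof is correct in outline, and the reduction you set up is exactly right: by the equivalence \cref{i-verdier}\(\Leftrightarrow\)\cref{i-gorenstein} of \cref{main}, \(P^{\triangleright}\) is Verdier iff \(P=(P^{\triangleright})_{<\top}\) \emph{and} each \(P_{<q}\) \(\ (q\in P)\) is Gorenstein*, so the only nontrivial step is the inheritance ``\(P\) Gorenstein* \(\Rightarrow\) \(P_{<q}\) Gorenstein*.'' Where you diverge from the paper is in how you get that inheritance. You argue topologically via the order complex: the link of the vertex \(q\) in \(\Ord(P)\) is \(\Ord(P_{<q})\ast\Ord(P_{>q})\), links of vertices in a generalized homology sphere are generalized homology spheres, and one extracts the sphere-ness of \(\Ord(P_{<q})\) from the join via Künneth. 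That chain of reasoning is valid, but it pulls in two standalone facts about homology manifolds (the link property and the ``join is a GHS only if factors are'' statement) that the paper never develops. The paper instead uses \cref{8e20e56844}, which defines the Gorenstein* property of a poset directly in terms of its open intervals: \(P\) is Gorenstein* iff every interval \((p,q)\subseteq P_{\bot,\top}\) has the homology of a sphere. Under this definition the inheritance is literally a restatement of the definition: \(P_{<q}=(\bot,q)\), and every interval of \((P_{<q})_{\bot,\top}\) is already an interval of \(P_{\bot,\top}\). Indeed, the paper records exactly this right after \cref{8e20e56844} (``By definition if \(P\) is Gorenstein* then \((p,q)\) is Gorenstein* for every \(p<q\in P_{\bot,\top}\)''). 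So the trade-off: your route reproves the inheritance from the topological characterization (which matches the parenthetical gloss in condition \cref{i-gorenstein}), at the cost of invoking classical but unproved-here machinery; the intended route uses the interval definition and needs no topology at all beyond what is already set up in \cref{ss-g}.
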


\begin{example}\label{3cad4d5d3f}
  As proven in \autocite[Proposition~3.1]{Bjorner84},
  a finite poset~\(P\) is the face poset
  of some regular CW complex if and only if
  the geometric realization of \(P_{<p}\) is homeomorphic to a sphere
  for each~\(p\).
  Hence any finite face poset is Verdier.
  In particular, we have an equivalence
  \(\Fun(P,\Cat{Sp})\simeq\Fun(P^{\op},\Cat{Sp})\).
\end{example}

In light of this example,
the equivalence
\(\text{\cref{i-verdier}}\Leftrightarrow\text{\cref{i-gorenstein}}\)
in \cref{main}
can be informally summarized by the following:

\begin{slogan}\label{dcf6d450ff}
  A finite poset enjoys Verdier duality
  if and only if it is homologically CW.
\end{slogan}

Of course,
there is an example that is not a face poset:

\begin{example}\label{a8f461525b}
  Let \(P\) be the face poset
  of the triangulation
  of a homology sphere
  that is not a sphere.
  Then \(P^{\triangleright}\)
  is Verdier,
  but it is not
  the face poset of
  any regular CW complex.
\end{example}

\begin{remark}\label{e14ab4a7a8}
  In this paper,
  we work over~\(\SS\) (or~\(\ZZ\)) for simplicity,
  but our argument is valid over other coefficients.
  For example,
  for a field~\(k\)
  and a finite poset~\(P\),
  the functor
  \(\projlim\colon\Fun(P,\D(k))\to\D(k)\)
  makes \(\Fun(P,\D(k))\)
  a commutative Frobenius algebra
  in the \(\infty\)-category
  of \(k\)-linear (presentable)
  stable \(\infty\)-categories
  if and only if
  \(P_{<p}\) is Gorenstein* over~\(k\)
  for any \(p\in P\).
\end{remark}

As a byproduct of our proof,
we find the following
generalization of \autocite[Proposition~1.2.4.3]{LurieHA},
which may be of independent interest.

\begin{Theorem}\label{0b4fe90159}
  Let \(P\) be a Gorenstein* finite poset.
  Then for any stable \(\infty\)-category \(\cat{C}\),
  a diagram \((P^{\triangleright})^{\triangleleft}
  \simeq(P^{\triangleleft})^{\triangleright}
  \to\cat{C}\) is
  a limit if and only if it is a colimit.
\end{Theorem}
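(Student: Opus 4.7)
Write $Q=(P^{\triangleright})^{\triangleleft}$ and let $n$ be the integer with $|P|\simeq S^n$ (with $|\emptyset|\simeq S^{-1}$, so $n=-1$). For a diagram $F\colon Q\to\cat{C}$, denote the canonical comparison maps
\[
\alpha_F\colon F(\hat 0)\to\projlim_{P^{\triangleright}}F|_{P^{\triangleright}},\qquad
\beta_F\colon\injlim_{P^{\triangleleft}}F|_{P^{\triangleleft}}\to F(\hat 1).
\]
The plan is to exhibit a natural equivalence $\fib(\alpha_F)\simeq\cofib(\beta_F)[-n-2]$ of exact functors $\Fun(Q,\cat{C})\to\cat{C}$; in a stable $\infty$-category, an object vanishes iff any suspension of it does, so the theorem follows. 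The shift is confirmed in the cases $P=\emptyset$ (where $\alpha_F=\beta_F=f$ and $\fib(f)\simeq\cofib(f)[-1]$) and $P=\{a,b\}$ antichain (recovering \autocite[Proposition~1.2.4.3]{LurieHA}).

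First I would reduce to the universal case. Both constructions are exact functors $\Fun(Q,\cat{C})\to\cat{C}$ built from limits, colimits, and (co)fibers, so a natural equivalence in $\cat{C}=\Fun(Q,\Cat{Sp})$ with $F=\id$ extends to arbitrary stable $\cat{C}$. Using $|P|\simeq S^n$, the cotensor and tensor splittings $F(\hat 1)^{|P|}\simeq F(\hat 1)\oplus F(\hat 1)[-n]$ and $F(\hat 0)\otimes|P|\simeq F(\hat 0)\oplus F(\hat 0)[n]$ give the decompositions
\[
\projlim_{P^{\triangleright}}F|_{P^{\triangleright}}\simeq\fib\bigl(\projlim_P F|_P\to F(\hat 1)[-n]\bigr),\quad
\injlim_{P^{\triangleleft}}F|_{P^{\triangleleft}}\simeq\cofib\bigl(F(\hat 0)[n]\to\injlim_P F|_P\bigr),
\]
describing $\fib(\alpha_F)$ and $\cofib(\beta_F)$ explicitly in terms of iterated fibers and cofibers built from $F(\hat 0)$, $F(\hat 1)$, and $F|_P$.

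To produce the equivalence, I would apply \cref{cc3fc211f5}: since $P$ is Gorenstein*, $P^{\triangleright}$ is Verdier, so by \cref{main}~\cref{i-vanishing}, $\projlim\SS_{[p,q]}\simeq 0$ for every $p<q$ in $P^{\triangleright}$; equivalently, $\Fun(P^{\triangleright},\Cat{Sp})$ is a commutative Frobenius algebra in $\Cat{Pr}_{\st}$ with trace $\projlim_{P^{\triangleright}}$. The Frobenius self-duality $\Fun(P^{\triangleright},\Cat{Sp})\simeq\Fun((P^{\op})^{\triangleleft},\Cat{Sp})$ exchanges $\projlim_{P^{\triangleright}}$ with $\injlim_{(P^{\op})^{\triangleleft}}$ up to a shift by $n+2$ reflecting the Gorenstein* dimension, yielding the desired equivalence. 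The main obstacle is carrying out this step cleanly: translating the Frobenius structure on $P^{\triangleright}$-diagrams into a statement about $Q$-diagrams (with the extra cone point $\hat 0\notin P^{\triangleright}$) requires careful tracking of the diagonal and counit maps through the duality, though one can alternatively proceed by directly unwinding the comparison maps using the vanishings from condition~\cref{i-vanishing}.
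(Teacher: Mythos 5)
The approach you sketch is genuinely different from the paper's, and I think it's worth comparing before pointing out the gap.

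The paper's proof is a generation argument: after reducing to $\cat{C}=\Cat{Sp}$ by Yoneda and to one direction by the symmetry $P\leftrightarrow P^{\op}$, it observes (using \cref{6e50e2411f,b334c9cd53}, exactly as in the proof of \cref{b644f9030c}) that the full subcategory of limiting diagrams in $\Fun(P_{\bot,\top},\Cat{Sp})$ is generated under colimits and shifts by the objects $\SS_{[\bot,p]}$ for $p\in P_{\top}$. Since colimiting diagrams also form a subcategory closed under colimits and shifts, it suffices to check these generators, and that reduces (via finite-spectrum self-duality) to the vanishing $\Gamma(P_{\bot};\SS_{[p,\bot]})\simeq 0$ from \cref{main}\cref{i-vanishing}. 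Your strategy instead aims for the \emph{stronger} statement that there is a natural equivalence $\fib(\alpha_F)\simeq\cofib(\beta_F)[-n-2]$ of exact functors. If established, this would recover the theorem and, as a bonus, generalize the iterated-octahedron fact that the total fiber of an $n$-cube equals the total cofiber shifted by $-n$. So your route, if it works, buys strictly more.

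However, as written there is a genuine gap, and it is exactly where you flag ``the main obstacle.'' The two preliminary reductions you make are fine: the passage to the universal case $\cat{C}=\Fun(Q,\Cat{Sp})$, $F=\id$ is a valid Yoneda argument, and the decompositions $\projlim_{P^{\triangleright}}F\simeq\fib(\projlim_P F|_P\to F(\hat 1)[-n])$ and its colimit analogue are correct, \emph{provided} you justify the stable splitting $\Sigma^{\infty}_+\lvert P\rvert\simeq\SS\oplus\SS[n]$ --- this is not automatic since $\lvert P\rvert$ is only a homology sphere, not an actual sphere; it follows from stable Hurewicz plus the homology Whitehead theorem for connective spectra, and the case $n=-1$ needs to be handled separately. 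But the core of the argument --- producing the equivalence $\fib(\alpha_F)\simeq\cofib(\beta_F)[-n-2]$ --- is not carried out. Your decomposition formulas reduce both sides to iterated (co)fibers involving $F(\hat 0)$, $F(\hat 1)$, $\projlim_PF|_P$, and $\injlim_PF|_P$, but $\projlim_PF|_P$ and $\injlim_PF|_P$ are \emph{not} related by a shift for a general diagram $F|_P$, so there is no term-by-term matching; the purported equivalence must come from subtle cancellations, and it is precisely the Gorenstein* vanishings that should make those cancellations happen. The sentence asserting that the Frobenius self-duality ``exchanges $\projlim_{P^{\triangleright}}$ with $\injlim_{(P^{\op})^{\triangleleft}}$ up to a shift by $n+2$'' is not a precise statement one can deduce from \cref{main}, and even if it were, the duality lives on $P^{\triangleright}$-diagrams and would still need to be upgraded to a statement about $Q$-diagrams with the extra cone point $\hat 0$ glued in. Until that step is spelled out, the argument does not establish the theorem. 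If you want to pursue this route, I would suggest computing the compact objects of $\Fun(Q^{\op},\Cat{Sp})$ that corepresent $\fib(\alpha)$ and $\cofib(\beta)$ explicitly (they are built from the extension-by-zero sheaves $\SS_{[p,q]}$) and showing they differ by the shift $n+2$, using the vanishings from \cref{main}\cref{i-vanishing}; but the easier path to the theorem itself is the paper's generation argument, which sidesteps the need for an explicit equivalence.
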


We can handle the locally finite case
by a limit argument;
precisely, we show the following:

\begin{Theorem}\label{lf-arbitrary}
  Let \(P\) a poset such that
  \(P_{\geq p}\) is finite
  and \(P_{<p}\)
  is finite and Gorenstein* for each \(p\in P\)
  (e.g., the face poset of a locally finite regular CW complex).
  Then there is a canonical equivalence
  \begin{equation*}
    \phantom,
    \DD\colon
    \Fun(P,\Cat{Sp})
    \longrightarrow\Fun(P^{\op},\Cat{Sp}),
  \end{equation*}
  which is pointwise given by
  \(\DD(F)\colon p\mapsto\projlim_{q\in P}\Map(p,q)\otimes F(q)\),
  where \(\otimes\) denotes the copower.
\end{Theorem}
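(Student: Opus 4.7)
Plan: I would proceed in three stages.

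Stage 1 (Well-definedness and finite reduction). First I would verify that $\DD(F)(p)=\projlim_{q\in P}\Map(p,q)\otimes F(q)$ defines a functor $\Fun(P,\Cat{Sp})\to\Fun(P^{\op},\Cat{Sp})$. The copower $\Map(p,q)\otimes F(q)$ vanishes for $q\not\geq p$, so the non-trivial data in the diagram are concentrated on the finite subposet $P_{\geq p}$; nonetheless, the zero entries can constrain the limit via their outgoing morphisms, so some care is needed. Setting $R_p:=\bigcup_{q\in P_{\geq p}}P_{\leq q}$, a finite downward-closed subposet of $P$ (a finite union of finite sets), one checks that any element of $P\setminus R_p$ is related only to zero entries via zero morphisms, and hence contributes only vacuous constraints to the limit. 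Therefore $\DD(F)(p)\simeq\projlim_{q\in R_p}\Map(p,q)\otimes F(q)$, a genuinely finite limit.

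Stage 2 (Identification with finite Verdier duality). The subposet $R_p$ inherits the Gorenstein* condition at each of its elements from $P$: for $r\in R_p$, $(R_p)_{<r}=P_{<r}$ is finite and Gorenstein*. Hence by \cref{main}, $R_p$ is a finite Verdier poset, admitting a Verdier equivalence $\DD_{R_p}$. Comparing pointwise formulas, $\DD(F)(p)$ coincides with the value at $p$ of $\DD_{R_p}(F|_{R_p})$.

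Stage 3 (Global equivalence). Write $P$ as a filtered union of its finite downward-closed subposets $Q$; each such $Q$ is finite Verdier by the same Gorenstein* argument, so we obtain equivalences $\DD_Q\colon\Fun(Q,\Cat{Sp})\xrightarrow{\sim}\Fun(Q^{\op},\Cat{Sp})$. The universal property of this presentation yields $\Fun(P,\Cat{Sp})\simeq\projlim_Q\Fun(Q,\Cat{Sp})$ and analogously for $P^{\op}$. Taking the corresponding limit of the local equivalences $\DD_Q$ produces the desired equivalence~$\DD$, and the pointwise formula recorded in the statement follows immediately from the identification in Stage 2.

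The main obstacle is verifying the compatibility of the $\DD_Q$ with the restriction functors for inclusions $Q_1\subseteq Q_2$, so that they assemble coherently into a limit. The subtlety is that the pointwise formula for $\DD_Q(F)(p)$ is a limit indexed by $Q$, which changes as $Q$ grows. However, the reasoning of Stage 1, applied relative to $Q$, shows that elements of $Q_2\setminus Q_1$ lying outside $R_p$ contribute only vacuous constraints, so $\DD_{Q_2}(F)(p)\simeq\DD_{Q_1}(F|_{Q_1})(p)$ whenever $p\in Q_1$ and $R_p\subseteq Q_1$. Restricting the filtered system to a cofinal family of $Q$'s containing all relevant $R_p$'s then yields the coherent system of equivalences required for the limit.
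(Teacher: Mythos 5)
Your Stages 1 and 2 are essentially sound: the reduction of the limit to the finite downward-closed subposet $R_p=\bigcup_{q\geq p}P_{\leq q}$ is correct (because $R_p$ is a sieve on which the diagram is supported, the cone conditions coming from $P\setminus R_p$ are vacuous), and $R_p$ is Verdier by \cref{main} since $(R_p)_{<r}=P_{<r}$ for all $r\in R_p$. This mirrors \cref{924159442f} and \cref{13a6000b87} in spirit.

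However, Stage 3 has a genuine gap, and it is precisely the one the paper explicitly flags right before \cref{eb22338bf4}: you cannot simply ``take the limit of the local equivalences $\DD_Q$.'' To assemble the $\DD_Q$ into an object of $\projlim_Q\Fun(\Delta^1,\Cat{Pr})$ (or its colimit analogue), one needs not just pointwise equivalences $\DD_{Q_2}(F)(p)\simeq\DD_{Q_1}(F|_{Q_1})(p)$ but a fully coherent diagram in $\Fun(\Delta^1,\Cat{Pr})$ indexed by $\Pow^{\fin}(P)$: i.e., naturality squares between the $\DD_Q$'s together with all higher compatibilities. The Frobenius functoriality established in \cref{ss-pot} (\cref{00461cedb4}) produces commutative squares one inclusion at a time, but not the coherent filtered system; verifying coherence by hand is exactly what the paper avoids. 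Also, the suggestion to restrict to ``a cofinal family of $Q$'s containing all relevant $R_p$'s'' does not help, since as $p$ ranges over $P$ the sets $R_p$ are unbounded and no single finite $Q$ can dominate them. The paper's resolution is to sidestep the coherence problem: first build the global trace $\Gamma_{\cpt}$ directly as a colimit of the (unproblematically coherent) diagram $Q\mapsto(\Gamma\colon\Fun(Q,\Cat{Sp})\to\Cat{Sp})$, define $\DD$ from $\Gamma_{\cpt}$ via \cref{ss-pot}, and then observe that each $\DD_{P_S}$ is a \emph{subobject} of $\DD$ in $\Fun(\Delta^1,\Cat{Pr}_{\st})$. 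Since the poset of subobjects is a $1$-category, the family $\{\DD_{P_S}\}_S$ is automatically coherent, and \cref{8d61256fe2,13a6000b87} show its colimit is $\DD$; since each $\DD_{P_S}$ is an equivalence, so is $\DD$. This detour through $\Gamma_{\cpt}$ and $\Sub(\DD)$ is the key idea missing from your proposal.
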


\begin{remark}\label{4c27a68603}
  In \cref{lf-arbitrary},
  the requirement that \(P_{\geq p}\) be finite
  cannot be dropped:
  Let \(P\) be the poset
  given in \autocite[Example~A.13]{ttg-fun}.
  Then \(P_{<p}\) is Gorenstein* for any \(p\);
  in fact, \(P\) is the face
  poset of a regular CW structure of~\(S^{\infty}\).
  However, the assignment
  described in the statement
  does not preserve compact objects,
  thus does not lift to an equivalence.
\end{remark}

Our formulation gives us more than aesthetic satisfaction.
For instance,
this unified view to the two duality theorems
enables us to study the interaction between startification and 
Verdier duality.
A sample application is the following:

\begin{Theorem}\label{str}
  Let \(X\to\Alex(P)\)
  be a stratification of a compact Hausdorff space,
  where \(P\) is a Verdier finite poset.
  Suppose that
  the inverse image \(\Shv_{\Cat{Sp}}(\Alex(P))\to\Shv_{\Cat{Sp}}(X)\)
  is fully faithful.
  Then our duality functor
  \(\Fun(P,\Cat{Sp})\to\Fun(P^{\op},\Cat{Sp})\)
  can be canonically identified with the composite
  \begin{equation*}
    \phantom,
    \Shv_{\Cat{Sp}}(\Alex(P))
    \xrightarrow{f^*}
    \Shv_{\Cat{Sp}}(X)
    \xrightarrow{\DD}
    \cShv_{\Cat{Sp}}(X)
    \xrightarrow{f_+}
    \cShv_{\Cat{Sp}}(\Alex(P)),
  \end{equation*}
  where \(\DD\) is the Verdier duality equivalence for~\(X\)
  and \(f_+\) is the cosheaf pushforward.
\end{Theorem}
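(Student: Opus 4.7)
The plan is to exploit rigidity. By the paper's rigidity theorem for the stabilization of a proper separated $\infty$-topos (applicable since $X$ is compact Hausdorff) and the Verdier hypothesis on $P$, both $\Shv_{\Cat{Sp}}(X)$ and $\Shv_{\Cat{Sp}}(\Alex(P)) \simeq \Fun(P, \Cat{Sp})$ are rigid commutative algebras in $\Cat{Pr}_{\st}$. For any such rigid algebra $\cat{A}$, the self-duality $\cat{A} \xrightarrow{\sim} \cat{A}^{\vee}$ is canonically determined by its Frobenius counit $\epsilon \colon \cat{A} \otimes \cat{A} \to \Cat{Sp}$ via the formula $a \mapsto \epsilon(a \otimes -)$. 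The proof accordingly splits into two subproblems: (i) that the counit $\epsilon_P$ on $\Shv_{\Cat{Sp}}(\Alex(P))$ agrees with the counit $\epsilon_X$ pulled back along $f^* \otimes f^*$; and (ii) that the cosheaf pushforward $f_+$ corresponds under the rigid self-dualities to the dual morphism $(f^*)^{\vee}$.

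Step (i) is a base-change computation. Both counits have the shape $p_* \circ d^*$ displayed in \eqref{e-e94d496b}. Naturality of the diagonal gives $d_X^* \circ (f \times f)^* \simeq f^* \circ d_P^*$, and the factorization of the map to the point as $p_X = p_P \circ f$ gives $p_{X,*} \simeq p_{P,*} \circ f_*$. Composing these and using the equivalence $f_* f^* \simeq \id$ afforded by the full faithfulness of $f^*$ yields $\epsilon_X \circ (f^* \otimes f^*) \simeq \epsilon_P$. Unwinding via $\DD(a) = \epsilon(a \otimes -)$, this says that for $a \in \Shv_{\Cat{Sp}}(\Alex(P))$, the image $\DD_P(a) \in \Shv_{\Cat{Sp}}(\Alex(P))^{\vee}$ is the restriction of $\DD_X(f^*(a)) \in \Shv_{\Cat{Sp}}(X)^{\vee}$ along $f^*$; equivalently, $\DD_P \simeq (f^*)^{\vee} \circ \DD_X \circ f^*$.

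The principal obstacle is step (ii): matching $f_+$ with $(f^*)^{\vee}$ under the identifications $\Shv_{\Cat{Sp}}(Y)^{\vee} \simeq \cShv_{\Cat{Sp}}(Y)$ for both $Y = X$ and $Y = \Alex(P)$. This asks for naturality of the rigid self-duality in $Y$: that covariant Verdier duality converts the symmetric monoidal sheaf pullback $f^*$ into the cosheaf pushforward $f_+$ on duals. This should follow from the characterization of $f_+$ as a left adjoint together with the counit compatibility established in step (i), but pinning it down at the level of $\infty$-categorical coherence is the main technical hurdle.
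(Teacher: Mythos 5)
Your overall strategy---compare the Frobenius data on $\Shv_{\Cat{Sp}}(\Alex(P))$ and $\Shv_{\Cat{Sp}}(X)$ via $f^*$ and then conclude by naturality---is the right one and matches the paper's. But there are two issues.

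First, your step~(ii) is flagged as ``the main technical hurdle,'' and you leave it open. The paper does not need to improvise here: this is exactly the content of \cref{e894284df5}, which is proved in \cref{ss-pot} precisely so that it can be applied in situations like this one. With $A = \Shv_{\Cat{Sp}}(\Alex(P))$, $B = \Shv_{\Cat{Sp}}(X)$, the algebra map the pullback $f^*$, and the linear form $m = \Gamma_X$, the lemma (in its internal-hom form with $C = D = \unit$) produces a commuting square whose vertical arrows are $f^*$ and precomposition with $f^*$, and whose horizontal arrows are the duality functors for $(A, \Gamma_X \circ f^*)$ and $(B, \Gamma_X)$. The only remaining input is $\Gamma_X \circ f^* \simeq \Gamma_{\Alex(P)}$, which follows from full faithfulness ($f_* f^* \simeq \id$) and $\Gamma_X = \Gamma_{\Alex(P)} \circ f_*$---that is the whole of the paper's proof. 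In particular, the identification $(f^*)^{\vee} \simeq f_+$ that you worry about is not an extra coherence question: under the convention $\cShv_{\Cat{Sp}}(\X) \simeq [\X, \Cat{Sp}]$, precomposition with $f^*$ \emph{is} the cosheaf pushforward $f_+$, so the lemma already outputs the answer in the stated form. Your step~(i) reduces to the same computation $\Gamma_X \circ f^* \simeq \Gamma_{\Alex(P)}$ once you note that $f^*$ is symmetric monoidal; phrasing it through the counit $p_* \circ d^*$ and base change is correct but unnecessarily indirect.

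Second, the claim that $\Fun(P, \Cat{Sp})$ is a rigid commutative algebra in $\Cat{Pr}_{\st}$ is incorrect, and the paper does not assert it. Rigidity requires the multiplication $m = \Delta^* \colon \Fun(P\times P, \Cat{Sp}) \to \Fun(P, \Cat{Sp})$ to admit a $\Fun(P\times P,\Cat{Sp})$-linear right adjoint, i.e., a projection formula for $\Delta_*$, which already fails for $P = \Delta^1$ (take $G$ with $G(0,1)\not\simeq G(1,1)$). What the Verdier hypothesis gives is the strictly weaker statement that $(\Fun(P,\Cat{Sp}), \Gamma)$ is a commutative Frobenius algebra. Since your argument only ever invokes the Frobenius counit and the induced self-duality, this error is not load-bearing, but you should replace ``rigid'' by ``Frobenius'' when speaking of the poset side; rigidity is genuinely used only for $\Shv_{\Cat{Sp}}(X)$ (\cref{2bd8a0da2f}).
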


\begin{example}\label{8fb3bcf249}
  In a nice situation,
  the (space-valued) inverse image
  \(\Shv(\Alex(P))\to\Shv(X)\) is fully faithful and its image
  consists of constructible sheaves;
  see \autocite[Section~3]{ClausenJansen}\footnote{
    Beware that this part contains
    a minor error; see \cref{c204a3567d}.
  } for a precise statement.
  For example,
  we can show that the assumption of \cref{str}
  is satisfied when
  \(X\) is a finite regular CW complex
  and \(P\) is its face poset.
\end{example}

This paper is organized as follows:
We develop necessary tools on duality in \cref{s-duality}
and on poset (co)homology in \cref{s-poset}.
Then we prove \cref{main} by showing
\(\text{\cref{i-gorenstein}}\Rightarrow\text{\cref{i-vanishing}}\),
\(\text{\cref{i-vanishing}}\Leftrightarrow\text{\cref{i-verdier}}\),
and \(\text{\cref{i-verdier}}\Rightarrow\text{\cref{i-gorenstein}}\)
in \cref{ss-van,ss-fin,ss-up}, respectively.
We also show \cref{0b4fe90159} in \cref{ss-up}.
After that, we study its variants
in \cref{s-v} and in particular prove \cref{lf-arbitrary}.
In \cref{s-ch},
we study Verdier duality for locally compact Hausdorff spaces
from a formal standpoint.
It motivates our formulation
and is used to obtain \cref{str}.

\subsection*{Conventions}

For a poset~\(P\), we write \(P_{\bot}\) and \(P_{\top}\)
for the posets obtained by adding the least element~\(\bot\)
and the greatest element~\(\top\), respectively.
When we regard \(P\) as an \(\infty\)-category,
these correspond to its left and right cones
(\(P^{\triangleleft}\) and~\(P^{\triangleright}\))
in \autocite[Notation~1.2.8.4]{LurieHTT}.
We also write
\(P_{\bot,\top}\) for the one obtained by adding both.
The empty face (or \((-1)\)-face) is not included in our face poset,
but we regard \(S^{-1}=\emptyset\) as a sphere.

We use the closed symmetric monoidal structure
on~\(\Cat{Pr}\) given in \autocite[Section~4.8.1]{LurieHA}.
For an \(\infty\)-topos~\(\cat{X}\)
and a presentable \(\infty\)-category~\(\cat{C}\),
the \(\infty\)-categories
of \(\cat{C}\)-valued sheaves \(\Shv_{\cat{C}}(\cat{X})\)
and cosheaves \(\cShv_{\cat{C}}(\cat{X})\)
are identified with
\(\cat{C}\otimes\cat{X}\) and \([\cat{X},\cat{C}]\),
respectively.
Concretely, their objects can be regarded as
limit-preserving functors \(\cat{X}^{\op}\to\cat{C}\)
and colimit-preserving functors \(\cat{X}\to\cat{C}\), respectively.
We write \(f_+\dashv f^+\) for
the pushforward-pullback adjunction for cosheaves.
The global section functor, i.e., the cohomology functor,
is denoted by~\(\Gamma\), not by \(\mathrm{R}\Gamma\).

\subsection*{Acknowledgments}

While working on this project,
I was at the University of Tokyo
and the Max Planck Institute for Mathematics
and was partially supported by the Hausdorff Center for Mathematics.
I thank them for the hospitality.

\section{General facts on duality}\label{s-duality}

\subsection{A useful criterion}\label{ss-dualizable}


Recall that
for objects \(A\), \(A^{\vee}\) and a morphism
\(e\colon A^{\vee}\otimes A\to\unit\)
in a symmetric monoidal \(\infty\)-category,
we say that \(e\) is a \emph{counit} of a duality
between \(A\) and~\(A^{\vee}\)
if for any objects \(C\) and~\(D\) the composite
\begin{equation*}
  \Map(C,D\otimes A^{\vee})
  \xrightarrow{\X\otimes A}
  \Map(C\otimes A,D\otimes A^{\vee}\otimes A)
  \xrightarrow{\Map(C\otimes A,D\otimes e)}
  \Map(C\otimes A,D)
\end{equation*}
is an equivalence.

\begin{lemma}\label{a3ad5064a0}
  Let \(A\) and \(A^{\vee}\) be objects
  and \(e\colon A^{\vee}\otimes A\to\unit\)
  a morphism
  in a closed symmetric monoidal \(\infty\)-category.
  If \(A\) is dualizable and the composite
  \begin{equation}
    \label{e-90ab6b96}
    A^{\vee}
    \simeq[\unit,A^{\vee}]
    \xrightarrow{\X\otimes A}[A,A^{\vee}\otimes A]
    \xrightarrow{[A,e]}[A,\unit]
  \end{equation}
  is an equivalence,
  then \(e\) is a counit.
  Here \([\X,\X]\) denotes the mapping object functor.
\end{lemma}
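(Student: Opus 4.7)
The plan is to reduce the claim to the fact that in a closed symmetric monoidal $\infty$-category, a duality between $A$ and some object is essentially determined by the mapping object $[A, \unit]$. Since $A$ is assumed dualizable, I can fix a genuine duality datum $(A^*, \varepsilon\colon A^* \otimes A \to \unit)$ for it, and the strategy is to transport $\varepsilon$ to $e$ along an equivalence $A^\vee \simeq A^*$ extracted from the hypothesis.

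First I would observe that the composite \eqref{e-90ab6b96} is precisely the adjoint of $e$ under the tensor-hom equivalence $\Map(A^\vee \otimes A, \unit) \simeq \Map(A^\vee, [A, \unit])$. So the hypothesis rephrases as: the adjoint $\widetilde{e}\colon A^\vee \to [A, \unit]$ of $e$ is an equivalence.

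Next I would apply the same observation to the genuine counit $\varepsilon$. A standard consequence of dualizability together with closedness (equivalently, the statement that $[A, D] \simeq D \otimes A^*$ naturally in $D$) is that the adjoint $\widetilde{\varepsilon}\colon A^* \to [A, \unit]$ of $\varepsilon$ is itself an equivalence. Composing, I obtain an equivalence $\varphi = \widetilde{\varepsilon}^{-1} \circ \widetilde{e}\colon A^\vee \xrightarrow{\sim} A^*$, and a short diagram chase with the tensor-hom adjunction identifies $e$ with $\varepsilon \circ (\varphi \otimes \id_A)$.

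Finally I would check that the counit condition is invariant under such a change of dual: for any $C$ and $D$, the composite defining the counit condition for $e$ factors through the analogous composite for $\varepsilon$ via the equivalence $\Map(C, D \otimes A^\vee) \simeq \Map(C, D \otimes A^*)$ induced by $\varphi$; since the latter composite is an equivalence by dualizability of $A$, so is the former. I expect the main obstacle to be the $\infty$-categorical bookkeeping needed to establish the identification $e \simeq \varepsilon \circ (\varphi \otimes \id_A)$, which is essentially naturality of the tensor-hom adjunction but requires some care to phrase coherently; once it is in hand, the rest of the argument is purely formal.
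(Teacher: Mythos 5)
Your proposal is correct, but it takes a genuinely different route from the paper's. The paper's proof is more direct: by the tensor--hom adjunction $\Map(C\otimes A, D)\simeq\Map(C,[A,D])$ and Yoneda, the counit condition reduces to showing that the morphism $D\otimes A^\vee\to[A,D]$ (defined by the same recipe as \cref{e-90ab6b96}) is an equivalence for every $D$; then dualizability is invoked only in the form that $[A,\X]$ commutes with $D\otimes\X$, so that this morphism is identified with $D\otimes\text{\cref{e-90ab6b96}}$, which is an equivalence by hypothesis. You instead fix a genuine duality datum $(A^*,\varepsilon)$, use dualizability plus closedness to see that $\widetilde{\varepsilon}\colon A^*\to[A,\unit]$ is an equivalence, extract an equivalence $\varphi\colon A^\vee\to A^*$ intertwining $e$ and $\varepsilon$, and transport the counit condition along $\varphi$. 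Both arguments rest on the same underlying fact (that $[A,D]\simeq A^*\otimes D$ when $A$ is dualizable), but the paper applies it pointwise to the map in question while you use it to build a comparison between abstract duality data. The paper's route is shorter and avoids introducing $A^*$; your route is more conceptual in that it explains the lemma as a rigidity/uniqueness statement for preduals, but requires more bookkeeping to nail down the identification $e\simeq\varepsilon\circ(\varphi\otimes\id_A)$ and the invariance of the counit condition, as you note. Either proof is acceptable.
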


\begin{proof}
  By the definition of \([\X,\X]\), it suffices to show that
  the morphism
  \begin{equation*}
    C\otimes A^{\vee}
    \simeq[\unit,C\otimes A^{\vee}]
    \xrightarrow{\X\otimes A}[A,C\otimes A^{\vee}\otimes A]
    \xrightarrow{[A,C\otimes e]}[A,C]
  \end{equation*}
  is an equivalence for every~\(C\).
  Since \(A\) is dualizable,
  this morphism is equivalent to the one obtained by
  applying \(C\otimes\X\) to \cref{e-90ab6b96}.
\end{proof}

\subsection{Functorialities}\label{ss-pot}

For a commutative algebra \(A\)
and a morphism \(l\colon A\to\unit\)
(of objects)
in a closed symmetric monoidal \(\infty\)-category,
we can form a morphism \(A\to[A,\unit]\)
as in \cref{e-90ab6b96}
by letting \(e\) be
the composite \(l\circ m\)
where \(m\colon A\otimes A\to A\) is the multiplication.
We discuss the (\(1\)-categorical) naturality of this assignment
\((A,l)\mapsto(A\to[A,\unit])\).
Note that the pair \((A,l)\) is called
a \emph{commutative Frobenius algebra} if \(e\) is a counit.

\begin{proposition}\label{00461cedb4}
  Suppose that
  \(f\colon A\to B\) is a morphism of commutative algebras
  in a symmetric monoidal \(\infty\)-category
  and that \(g\colon B\to A\) is an \(A\)-linear morphism.
  Then for every morphism \(l\colon A\to\unit\)
  and any objects \(C\) and~\(D\),
  there are commutative squares
  \begin{align*}
    \begin{tikzcd}[ampersand replacement=\&]
      \Map(C,D\otimes A)\ar[r]\ar[d,"(D\otimes f)\circ\X"']\&
      \Map(C\otimes A,D)\ar[d,"\X\circ(C\otimes g)"]\\
      \Map(C,D\otimes B)\ar[r]\&
      \Map(C\otimes B,D)\rlap,
    \end{tikzcd}
    &&
    \begin{tikzcd}[ampersand replacement=\&]
      \Map(C,D\otimes B)\ar[r]\ar[d,"(D\otimes g)\circ\X"']\&
      \Map(C\otimes B,D)\ar[d,"\X\circ(C\otimes f)"]\\
      \Map(C,D\otimes A)\ar[r]\&
      \Map(C\otimes A,D)\rlap,
    \end{tikzcd}
  \end{align*}
  where the horizontal morphisms are
  the ones associated to
  \((A,l)\) and \((B,l\circ g)\), respectively.

  Moreover,
  if the symmetric monoidal structure is closed,
  the same thing holds when the mapping spaces
  are replaced by the internal mapping objects.
\end{proposition}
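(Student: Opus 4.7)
The plan is to unwind the horizontal morphisms and reduce each square to the defining identity for $A$-linearity of $g$.

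Recall that for a pair $(R, l_R\colon R \to \unit)$ with $R$ a commutative algebra of multiplication $m_R$, the horizontal morphism $\Map(C, D \otimes R) \to \Map(C \otimes R, D)$ sends $\phi\colon C \to D \otimes R$ to the composite
\[
C \otimes R \xrightarrow{\phi \otimes R} D \otimes R \otimes R \xrightarrow{D \otimes m_R} D \otimes R \xrightarrow{D \otimes l_R} D.
\]
For the first square, applying this to $(A, l)$ and to $(B, l \circ g)$ and chasing the two composites, each sends $\phi\colon C \to D \otimes A$ to a morphism $C \otimes B \to D$ of the form $(D \otimes l) \circ (D \otimes h) \circ (\phi \otimes B)$ for some $h\colon A \otimes B \to A$; the two values of $h$ that appear are
\[
m_A \circ (A \otimes g) \quad\text{and}\quad g \circ m_B \circ (f \otimes B),
\]
after using the interchange law $(\phi \otimes A) \circ (C \otimes g) = (D \otimes A \otimes g) \circ (\phi \otimes B)$. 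These agree by the $A$-linearity of $g$, with $B$ viewed as an $A$-module via $f$. The second square unfolds analogously and reduces to the mirror identity $g \circ m_B \circ (B \otimes f) = m_A \circ (g \otimes A)$ on $B \otimes A \to A$, which again follows from $A$-linearity of $g$ after swapping left and right $A$-module structures via commutativity.

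For the closed version, the horizontal map is represented by a single morphism $R \to [R, \unit]$ adjoint to $l_R \circ m_R$, and both squares of internal mapping objects commute by the same diagrammatic argument applied at the level of internal homs.

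The main subtlety is the $\infty$-categorical coherence: a commutative square is a homotopy-coherent datum, not an equation. To supply the required higher coherence, I would realize both composite paths as values of a single functor on a diagram category encoding the commutative algebras $A$ and $B$, the algebra morphism $f$, the $A$-linear morphism $g$, and the counit $l$; all higher-coherent witnesses are then inherited from those already present in $\Mod_A$, where $g$ lives by hypothesis as an $A$-linear morphism.
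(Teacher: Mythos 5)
Your pointwise computation is correct: unwinding the horizontal morphism for $(R,l_R)$ as $\phi \mapsto (D\otimes l_R)\circ(D\otimes m_R)\circ(\phi\otimes R)$ and chasing around both squares does reduce the first square to $g\circ m_B\circ(f\otimes B) = m_A\circ(A\otimes g)$ (literal $A$-linearity of $g$, with $B$ an $A$-module via $f$) and the second to the mirror identity $g\circ m_B\circ(B\otimes f) = m_A\circ(g\otimes A)$, which indeed follows from the first by commutativity. So you have correctly identified the combinatorial heart of the statement.

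The gap is precisely the paragraph you flag yourself. In an $\infty$-categorical setting, the proposition is not the assertion that two $\pi_0$-classes of morphisms coincide; it is the construction of a homotopy filling the square, and that filling must be produced, not promised. Your sentence ``I would realize both composite paths as values of a single functor on a diagram category encoding the commutative algebras $A$ and $B$, the algebra morphism $f$, the $A$-linear morphism $g$, and the counit $l$'' names a strategy but does not carry it out: it is not clear which diagram $\infty$-category is meant, why such a functor exists, nor how one extracts the desired $2$-cell from it. That construction is the actual content of the paper's proof: the authors write out an explicit ladder diagram factoring both composite paths into elementary cells --- four naturality squares and one triangle/pentagon --- and observe that each cell is filled for a specific, nameable reason (functoriality of $\Map(C\otimes\X, D\otimes\X)$ in each slot, the diagonal naturality of tensoring, and the single cell witnessing $A$-linearity of $g$). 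The nontrivial cell in each diagram is exactly the coherence datum attached to the identity you isolated, and the surrounding cells are what propagate it to the outer square. In short: your reduction identifies the right identity, but you have not supplied the diagram that realizes the square as a pasting of cells each of which is filled by an available piece of data; that diagram is the proof.
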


\begin{example}\label{50e6a4f108}
  Let \(K\) be an \(\infty\)-category
  and \(i\colon K_0\hookrightarrow K\) be an inclusion
  of a sieve.
  A direct computation shows that \(f
  =i^*\colon\Fun(K,\Cat{Sp})\to\Fun(K_0,\Cat{Sp})\)
  and its right adjoint \(g\)
  satisfy the assumptions
  of \cref{00461cedb4}
  in~\(\Cat{Pr}_{\st}\).
\end{example}

\begin{example}\label{b5898d2f13}
  Let \(K\) be an \(\infty\)-category
  and \(j\colon K_1\hookrightarrow K\) be an inclusion
  of a cosieve.
  A direct computation shows that
  \(f
  =j^*\colon\Fun(K,\Cat{S})\to\Fun(K_1,\Cat{S})\)
  and its left adjoint \(g\) satisfy the assumptions
  of \cref{00461cedb4}
  in~\(\Cat{Pr}\).
  This is a special case of \cref{f7ea8c5d40} below.
\end{example}

\begin{example}\label{68011c91d6}
  Let \(p\colon\cat{Y}\to\cat{X}\) be a proper geometric morphism
  between \(\infty\)-toposes.
  According to \cref{90680bda5b},
  \(f=p^*\colon\Shv_{\Cat{Sp}}(\cat{X})\to\Shv_{\Cat{Sp}}(\cat{Y})\)
  and its right adjoint~\(g\)
  satisfy the assumptions of \cref{00461cedb4}
  in~\(\Cat{Pr}_{\st}\).
\end{example}

\begin{example}\label{f7ea8c5d40}
  Let \(j\colon\cat{Y}\to\cat{X}\) be an étale geometric morphism
  between \(\infty\)-toposes.
  As noted in \autocite[Remark~6.3.5.2]{LurieHTT},
  \(f=j^*\colon\Shv(\cat{X})\to\Shv(\cat{Y})\)
  and its left adjoint~\(g\)
  satisfy the assumptions
  of \cref{00461cedb4}
  in~\(\Cat{Pr}\).
\end{example}

\begin{proof}[Proof of \cref{00461cedb4}]
  In this proof,
  in order to simplify the notation,
  we write \((\X,\X)\) for
  \(\Map(C\otimes\X,D\otimes\X)\)
  or \([C\otimes\X,D\otimes\X]\) if
  the symmetric monoidal structure is closed.


  For the first square,
  we construct the \(2\)-cells in the diagram
  \begin{equation*}
    \begin{tikzcd}
      &&(A,A\otimes A)\ar[r]\ar[d,"{(g,A\otimes A)}"]&
      (A,A)\ar[r]\ar[d,"{(g,A)}"]&
      (A,\unit)\ar[d,"{(g,\unit)}"]\\
      (\unit,A)\ar[r]\ar[d,"{(\unit,f)}"']\ar[rru]&
      (B,A\otimes B)\ar[r]\ar[d,"{(B,f\otimes B)}"']&
      (B,A\otimes A)\ar[r]&
      (B,A)\ar[r]&
      (B,\unit)\rlap.\\
      (\unit,B)\ar[r]&
      (B,B\otimes B)\ar[r]&
      (B,B)\ar[ru]
    \end{tikzcd}
  \end{equation*}
  We obtain the triangle and the four rectangles
  by naturality.
  We also obtain the pentagon
  by the linearity of~\(g\) and the functoriality of \((B,\X)\).

  For the second square,
  we construct the \(2\)-cells in the diagram
  \begin{equation*}
    \begin{tikzcd}
      &&(B,B\otimes B)\ar[r]\ar[d,"{(f,B\otimes B)}"]&
      (B,B)\ar[r]\ar[d,"{(f,B)}"]&
      (B,A)\ar[r]\ar[d,"{(f,A)}"]&
      (B,\unit)\ar[d,"{(f,\unit)}"]\\
      (\unit,B)\ar[r]\ar[d,"{(\unit,g)}"']\ar[rru]&
      (A,B\otimes A)\ar[r]\ar[d,"{(A,g\otimes A)}"']&
      (A,B\otimes B)\ar[r]&
      (A,B)\ar[r]&
      (A,A)\ar[r]&
      (A,\unit)\rlap.\\
      (\unit,A)\ar[r]&
      (A,A\otimes A)\ar[rrru]
    \end{tikzcd}
  \end{equation*}
  We obtain the upper triangle and the four rectangles
  by naturality.
  We also obtain the lower triangle by the linearity of~\(g\)
  and the functoriality of \((A,\X)\).
\end{proof}

We record the following obvious consequence:

\begin{corollary}\label{ee41f58df5}
  In the situation of \cref{00461cedb4},
  assume furthermore that \((A,l)\) is Frobenius
  and that \(f\circ g\) is homotopic to the identity.
  Then \((B,l\circ g)\) is also Frobenius.
\end{corollary}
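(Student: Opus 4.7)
My plan is to exhibit the Frobenius pairing map \(\varphi_B\colon B\to[B,\unit]\) associated to \((B,l\circ g)\) as a retract, in the arrow category, of the analogous map \(\varphi_A\colon A\to[A,\unit]\) associated to \((A,l)\). Since the hypothesis that \((A,l)\) is Frobenius amounts to \(\varphi_A\) being an equivalence, and equivalences are stable under retracts, this will complete the proof.

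To produce the retraction data, I would instantiate \cref{00461cedb4} with \(C=D=\unit\), using the internal mapping object version available from the closedness assumption. Unwinding definitions shows that the horizontal arrows of the two resulting squares are precisely \(\varphi_A\) and \(\varphi_B\). The first square then has left vertical \(f\colon A\to B\) and right vertical \(g^*\colon[A,\unit]\to[B,\unit]\); the second has left vertical \(g\colon B\to A\) and right vertical \(f^*\colon[B,\unit]\to[A,\unit]\). Stacking the second on top of the first yields a vertically composed diagram whose left column composes to \(f\circ g\simeq\id_B\) by hypothesis and whose right column composes to \(g^*\circ f^*\simeq(f\circ g)^*\simeq\id_{[B,\unit]}\). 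This is precisely the data exhibiting \(\varphi_B\) as a retract of \(\varphi_A\) in the arrow category.

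No genuine obstacle is expected. The bulk of the work is already packaged in the naturality squares provided by \cref{00461cedb4}, and the remainder is the formal fact that equivalences are closed under retracts. The one point that warrants a brief check is that, after specialising \(C=D=\unit\) in that proposition (and interpreting the mapping spaces as internal mapping objects), the horizontal maps of the two squares really coincide with the Frobenius pairing map of \cref{e-90ab6b96}; this is a direct unwinding of the construction \((A,l)\mapsto(A\to[A,\unit])\) given at the start of \cref{ss-pot}.
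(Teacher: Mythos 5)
There is a genuine, though easily repaired, gap. You assert that ``the hypothesis that \((A,l)\) is Frobenius amounts to \(\varphi_A\) being an equivalence,'' and then conclude that showing \(\varphi_B\) is an equivalence finishes the job. But by the paper's definition, \((B,l\circ g)\) being Frobenius means that \((l\circ g)\circ m_B\) is a counit, i.e.\ that \(\Map(C,D\otimes B)\to\Map(C\otimes B,D)\) is an equivalence for \emph{all} \(C,D\). The specialization \(C=D=\unit\) (internalized) gives only \(\varphi_B\colon B\to[B,\unit]\), and an equivalence there does not by itself recover the full counit condition: \cref{a3ad5064a0} upgrades it only under the additional hypothesis that the object is dualizable. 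You never verify that \(B\) is dualizable, so your reduction is incomplete as stated.

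The fix is one sentence: since \(f\circ g\simeq\id_B\), the object \(B\) is a retract of \(A\), and \(A\) is dualizable (being self-dual via the Frobenius structure); dualizability passes to retracts, so \(B\) is dualizable and \cref{a3ad5064a0} then applies. Alternatively — and this is presumably what the paper means by ``obvious consequence'' — you can run exactly your stacking-of-squares argument with the two \(\Map(C,\cdot)\)-squares of \cref{00461cedb4} for \emph{arbitrary} \(C,D\) rather than specializing to \(\unit\). The vertical composites are still \(\id\) on both sides (using \(f\circ g\simeq\id_B\) and naturality of \(\Map\)), so the \(B\)-pairing map is a retract of the \(A\)-pairing map for every \(C,D\), which is literally the counit condition and requires no detour through \cref{a3ad5064a0} or dualizability.
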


We also have the following variant:

\begin{lemma}\label{e894284df5}
  Suppose that
  \(f\colon A\to B\) is a morphism of commutative algebras
  in a symmetric monoidal \(\infty\)-category.
  Then for every morphism \(m\colon B\to\unit\)
  and any objects \(C\) and~\(D\),
  there is a commutative square
  \begin{equation*}
    \begin{tikzcd}
      \Map(C,D\otimes A)\ar[r]\ar[d,"(D\otimes f)\circ\X"']&
      \Map(C\otimes A,D)\\
      \Map(C,D\otimes B)\ar[r]&
      \Map(C\otimes B,D)\ar[u,"\X\circ(C\otimes f)"']\rlap,
    \end{tikzcd}
  \end{equation*}
  where the horizontal morphisms are
  the ones associated to
  \((A,m\circ f)\) and \((B,m)\), respectively.

  Moreover,
  if the symmetric monoidal structure is closed,
  the same thing holds when the mapping spaces
  are replaced by the internal mapping objects.
\end{lemma}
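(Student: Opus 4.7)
The proof will follow the template established in the proof of \cref{00461cedb4}. Using the shorthand $(\X,\X)$ for $\Map(C\otimes\X, D\otimes\X)$ (or the internal mapping object analogue in the closed case), each horizontal morphism in the square factors as a four-step composite
\begin{equation*}
(\unit, A) \xrightarrow{\X\otimes A} (A, A\otimes A) \xrightarrow{(A, m_A)} (A, A) \xrightarrow{(A, m\circ f)} (A, \unit),
\end{equation*}
where $m_A$ denotes the multiplication on~$A$, and analogously for the bottom edge with $B$ in place of $A$ and $(B, m)$ in the final slot.

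The plan is to construct a pasting diagram whose outer boundary is the square in the statement and whose inner \(2\)-cells are all instances of naturality of the bifunctor $(\X, \X)$ except one, which invokes the defining identity $f \circ m_A = m_B \circ (f \otimes f)$ of an algebra morphism. Specifically, I would first use functoriality of $(A, \X)$ to rewrite $(A, m \circ f) \circ (A, m_A)$ as $(A, m) \circ (A, m_B) \circ (A, f \otimes f)$; this is the unique step that uses the algebra-morphism axiom. The resulting five-step top route
\begin{equation*}
(\unit, A) \to (A, A\otimes A) \to (A, B\otimes B) \to (A, B) \to (A, \unit)
\end{equation*}
is then connected to the ``down-right-up'' route through $(\unit, B)$, $(B, B\otimes B)$, $(B, B)$, $(B, \unit)$ via a sequence of naturality rectangles for the bifunctor $(\X, \X)$. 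The most substantive of these is a square relating the composite $(\unit, A)\to(A, A\otimes A)\to(A, B\otimes B)$ to $(\unit, A)\to(\unit, B)\to(B, B\otimes B)\to(A, B\otimes B)$; both send $\phi$ to $(D\otimes f\otimes f)\circ(\phi\otimes A)$, via the tensor-interchange identity $(\phi \otimes B) \circ (C \otimes f) = (D\otimes A\otimes f)\circ(\phi\otimes A)$. The remaining cells come from the naturality of the postcompositions $(A, m_B)$, $(A, m)$ against the precomposition $(f, \X)$.

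The main obstacle is purely bookkeeping: laying out the interpolating vertices so that every cell is visibly either a naturality rectangle or the single algebra-morphism identity, as no deeper structural input is needed. The closed case follows automatically from the mapping-space case by Yoneda, using the adjunction $\Map(T, [X, Y]) \simeq \Map(T \otimes X, Y)$.
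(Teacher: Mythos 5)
Your proposal is correct and is essentially the paper's argument: the paper constructs the same pasting diagram, with a single cell supplied by the algebra-morphism identity \(f\circ m_A = m_B\circ(f\otimes f)\) (there called the ``upper square'') and the remaining cells, including the pentagon you single out, filled by naturality of \((\X,\X)\) and of the slant map. The only minor difference is the treatment of the closed case: the paper reruns the identical diagram with \((\X,\X)=[C\otimes\X,D\otimes\X]\) rather than invoking a Yoneda reduction from the mapping-space case, which sidesteps any worry about coherent functoriality in~\(C\).
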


\begin{proof}
  We use the same notation
  as in the proof of \cref{00461cedb4}.
  We construct the \(2\)-cells in the diagram
  \begin{equation*}
    \begin{tikzcd}
      (\unit,A)\ar[r]\ar[d,"{(\unit,f)}"']&
      (A,A\otimes A)\ar[r]\ar[d,"{(A,f\otimes f)}"']&
      (A,A)\ar[d,"{(A,f)}"']\ar[rd]&
      {}\\
      (\unit,B)\ar[rd]&
      (A,B\otimes B)\ar[r]&
      (A,B)\ar[r]&
      (A,\unit)\\
      {}&
      (B,B\otimes B)\ar[u,"{(f,B\otimes B)}"']\ar[r]&
      (B,B)\ar[r]\ar[u,"{(f,B)}"']&
      (B,\unit)\rlap.\ar[u,"{(f,\unit)}"']
    \end{tikzcd}
  \end{equation*}
  We obtain the upper square since
  \(f\) is a morphism of commutative algebras.
  We also obtain the other cells by naturality.
\end{proof}

\section{Homotopy theory of posets}\label{s-poset}

\subsection{Poset cohomology}\label{ss-h-poset}

\begin{definition}\label{69fc302def}
  For a poset~\(P\),
  we write \(\lvert P\rvert\) for
  the geometric realization
  (as a topological space) of its nerve
  and \(\Ord(P)\)
  for its \emph{order complex}, i.e.,
  the abstract simplicial complex
  consisting of finite (nonempty) chains in~\(P\).
  Note that \(\lvert P\rvert\)
  is canonically homeomorphic to
  the geometric realization of \(\Ord(P)\).
\end{definition}

In this subsection,
we study how
the cohomology of~\(\lvert P\rvert\)
and that of~\(P\),
i.e., the sheaf cohomology of \(\Fun(P,\Cat{S})\),
are related.
We first recall the following from
\autocite[Section~A.1]{LurieHA}:

\begin{definition}\label{6c3f6be635}
  We say that an \(\infty\)-topos \(\cat{X}\)
  has \emph{constant shape}
  if the shape \(\Sh\cat{X}\)
  is corepresentable.
  If \(\cat{X}_{/X}\) has constant shape
  for every \(X\in\cat{X}\),
  we say that \(\cat{X}\) is \emph{locally of constant shape}.
  According to \autocite[Proposition~A.1.18]{LurieHA},
  this is equivalent to the condition that
  the constant sheaf functor \(\cat{S}\to\cat{X}\)
  admits a left adjoint.
\end{definition}

\begin{example}\label{940a2e3d8c}
  The presheaf \(\infty\)-topos
  of an \(\infty\)-category
  is locally of constant shape.
  Its shape is the image
  under the left adjoint of
  \(\Cat{S}\hookrightarrow\Cat{Cat}_{\infty}\).
\end{example}

\begin{example}\label{df7429c55a}
  The sheaf \(\infty\)-topos
  of a CW complex is locally of constant shape.
  Its shape is the homotopy type.
  In fact, any CW complex
  is locally of singular shape in the sense 
  of \autocite[Section~A.4]{LurieHA}
  as any open subspace
  is homotopy equivalent to a CW complex.
\end{example}

\begin{proposition}\label{cb58ae3d6a}
  If an \(\infty\)-topos \(\cat{X}\)
  is locally of constant shape,
  for any spectrum~\(E\),
  the canonical morphism
  \([\Sigma^{\infty}_+\Sh\cat{X},E]
  \to\Gamma(\cat{X};E)\)
  is an equivalence.
  Here \([\X,\X]\) denotes the mapping spectrum.
\end{proposition}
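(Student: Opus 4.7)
The plan is to realize $\Gamma(\cat{X};E)$ as the mapping spectrum out of the left adjoint of the constant sheaf functor evaluated on the unit, and then to identify this object with $\Sigma^{\infty}_+\Sh\cat{X}$.

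Since $\cat{X}$ is locally of constant shape, the constant sheaf functor $\pi^{*}\colon\cat{S}\to\cat{X}$ admits a left adjoint $\pi_{!}$, and by the defining property of the shape, $\pi_{!}(\unit_{\cat{X}})\simeq\Sh\cat{X}$. Using the identification $\Shv_{\Cat{Sp}}(\cat{X})\simeq\Cat{Sp}\otimes\cat{X}$ from the conventions, the constant sheaf functor on spectra corresponds to $\id_{\Cat{Sp}}\otimes\pi^{*}$. Since tensoring in the symmetric monoidal $\infty$-category $\Cat{Pr}$ preserves adjunctions (viewing $\Cat{Pr}$ as an $(\infty,2)$-category), this functor has left adjoint $\id_{\Cat{Sp}}\otimes\pi_{!}$. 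Applied to the unit $\SS\otimes\unit_{\cat{X}}$, this yields $\SS\otimes\pi_{!}(\unit_{\cat{X}})=\SS\otimes\Sh\cat{X}$, which under the canonical equivalence $\Cat{Sp}\otimes\cat{S}\simeq\Cat{Sp}$ (where tensoring is $E\otimes X\simeq E\wedge\Sigma^{\infty}_+X$) identifies with $\Sigma^{\infty}_+\Sh\cat{X}$. Combining, we would obtain
\[
[\Sigma^{\infty}_+\Sh\cat{X},E]\simeq[(\id\otimes\pi_{!})(\unit),E]\simeq[\unit,\pi^{*}_{\Cat{Sp}}E]=\Gamma(\cat{X};E),
\]
where the middle equivalence is the adjunction read at the level of mapping spectra (valid since all functors involved are exact between stable $\infty$-categories).

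The main subtlety is confirming that the symmetric monoidal structure on $\Cat{Pr}$ preserves adjunctions, so that $\id_{\Cat{Sp}}\otimes\pi_{!}$ is genuinely a left adjoint to $\pi^{*}_{\Cat{Sp}}$. One may alternatively argue directly: since $\pi^{*}$ is a right adjoint to $\pi_{!}$ it preserves both limits and colimits, its stabilization inherits these properties, and so admits a left adjoint computed by the analogous formula. A further routine check---essentially tracking units and counits of the various adjunctions---then confirms that the constructed equivalence coincides with the canonical morphism in the statement.
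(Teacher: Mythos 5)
Your proof is correct and takes essentially the same route as the paper's: both boil down to identifying the left adjoint of the spectrum-valued constant sheaf functor $\pi^*_{\Cat{Sp}}$ in terms of $\pi_!$ and then evaluating at the unit object to produce $\Sigma^{\infty}_+\Sh\cat{X}$. The paper phrases this identification concretely, regarding $\Shv_{\Cat{Sp}}(\cat{X})\simeq\Cat{Sp}\otimes\cat{X}$ as limit-preserving functors $\cat{X}^{\op}\to\Cat{Sp}$ so that $\pi^*_{\Cat{Sp}}$ becomes precomposition with $(\pi_!)^{\op}$, and then simply evaluates the resulting functor at the terminal object; you instead invoke the fact that the symmetric monoidal structure on $\Cat{Pr}$ preserves adjunctions and run the adjunction isomorphism on mapping spectra. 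These are the same computation seen from two angles. The one point where you rely on an unproved assertion is the tensoring-preserves-adjunctions step: it is standard (any $\infty$-functor induces a $2$-functor on homotopy $2$-categories and hence carries an adjunction datum to an adjunction datum, and here $\pi_!\dashv\pi^*$ lives entirely in $\Cat{Pr}$ since both legs are colimit-preserving), but it is worth at least citing; your ``alternative direct argument'' about the stabilization ``inheriting'' limit-preservation is vague and would benefit from the same concrete description of $\pi^*_{\Cat{Sp}}$ as precomposition with $(\pi_!)^{\op}$ that the paper uses, which makes both limit- and colimit-preservation manifest.
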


\begin{proof}
  Let \(p\colon\cat{X}\to\Cat{S}\) denote the projection.
  By assumption,
  \(p^*\) admits a left adjoint~\(p_!\).
  If we regard objects
  in \(\Shv_{\Cat{Sp}}(\X)\)
  as limit-preserving functors
  \((\X)^{\op}\to\Cat{Sp}\),
  the spectrum-valued pullback
  \(\Cat{Sp}\to\Shv_{\Cat{Sp}}(\cat{X})\)
  is given as the precomposition with~\((p_!)^{\op}\).
  Therefore,
  \(\Gamma(\cat{X};E)\simeq
  p_*p^*E\) is given
  the value of \(E\colon\Cat{S}^{\op}\to\Cat{Sp}\)
  at \(p_!p^*{*}\simeq\Sh\cat{X}\),
  which is
  the cohomology \([\Sigma^{\infty}_+\Sh\cat{X},E]\).
\end{proof}

\begin{corollary}\label{afc4ccfff7}
  For a poset~\(P\) and a spectrum~\(E\),
  we have a functorial (both in~\(P\) and in~\(E\))
  equivalence
  \(\Gamma(P;E)\simeq\Gamma(\lvert P\rvert;E)\),
  where \(E\) denotes the constant sheaves
  on the \(\infty\)-toposes
  \(\Fun(P,\Cat{S})\) and \(\Shv(\lvert P\rvert)\),
  respectively.
\end{corollary}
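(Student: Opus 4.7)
The plan is to apply \cref{cb58ae3d6a} to the two $\infty$-toposes $\Fun(P,\Cat{S})$ and $\Shv(\lvert P\rvert)$, and then identify their shapes with one another.

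First, I would invoke \cref{940a2e3d8c} to conclude that $\Fun(P,\Cat{S})$ is locally of constant shape with shape given by the image of $P$ under the left adjoint to $\Cat{S}\hookrightarrow\Cat{Cat}_{\infty}$; this left adjoint sends a small $\infty$-category to its classifying space, i.e., the geometric realization of its nerve. Next, I would invoke \cref{df7429c55a} to see that $\Shv(\lvert P\rvert)$ is locally of constant shape with shape equal to the homotopy type of $\lvert P\rvert$. By the very definition of $\lvert P\rvert$ in \cref{69fc302def}, these two shapes agree, and the identification is clearly functorial in~$P$ (since both the presheaf topos construction $P\mapsto\Fun(P,\Cat{S})$ and the sheaf topos construction $P\mapsto\Shv(\lvert P\rvert)$ are functorial, and shape is a functor out of the category of $\infty$-toposes).

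With this in hand, \cref{cb58ae3d6a} supplies functorial equivalences
\begin{equation*}
  \Gamma(P;E)\simeq[\Sigma^{\infty}_+\Sh\Fun(P,\Cat{S}),E]
  \quad\text{and}\quad
  \Gamma(\lvert P\rvert;E)\simeq[\Sigma^{\infty}_+\Sh\Shv(\lvert P\rvert),E],
\end{equation*}
both of which are naturally identified with $[\Sigma^{\infty}_+\lvert P\rvert,E]$. Composing one equivalence with the inverse of the other yields the desired natural equivalence in $P$ and in~$E$.

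I do not expect any substantive obstacle: the content is essentially the naturality of the shape equivalence. The only mildly delicate point is checking that the equivalence $\Sh\Fun(P,\Cat{S})\simeq\lvert P\rvert$ is natural in~$P$ — this reduces to the compatibility of the left adjoint $\Cat{Cat}_{\infty}\to\Cat{S}$ with the colimit presentation of $\lvert P\rvert$ as a geometric realization of the nerve, which is a standard fact.
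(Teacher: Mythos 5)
Your argument is precisely the one the paper intends: the paper's proof is the single sentence ``This follows from \cref{940a2e3d8c,df7429c55a,cb58ae3d6a},'' which is exactly your combination of applying \cref{cb58ae3d6a} to both $\infty$-toposes and identifying the two shapes via \cref{940a2e3d8c,df7429c55a}. You have simply unwound the citations into full detail, including the (correct) observation about naturality in~\(P\), so the approach is the same.
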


\begin{proof}
  This follows
  from
  \cref{940a2e3d8c,df7429c55a,cb58ae3d6a}.
\end{proof}

\begin{remark}\label{f2eee8dfff}
  In fact, at least if \(P_{\geq p}\)
  is finite for \(p\in P\),
  we can construct a canonical
  geometric morphism \(\Shv(\lvert P\rvert)\to\Fun(P,\Cat{S})\)
  whose inverse image functor is fully faithful.
  This shows that
  we can take any functor \(E\colon P\to\Cat{Sp}\) as a coefficient
  in the statement of \cref{afc4ccfff7},
  but we do not need this generality
  in this paper.
\end{remark}

\subsection{Gorenstein* posets}\label{ss-g}


We first recall the following notion
from combinatorial commutative algebra.
See \autocite[Chapter~II]{Stanley96} for a textbook account,
which in particular explains where the name comes from.

\begin{definition}\label{fe7384d8c5}
  We call
  an \(n\)-dimensional\footnote{Here \(n\) can be \(-1\),
  so that the empty complex is Gorenstein*.}
  finite abstract simplicial complex
  \emph{Gorenstein*}
  if its geometric realization
  is a generalized homology \(n\)-sphere,
  i.e., an (integral) homology \(n\)-manifold
  having the (integral) homology of an \(n\)-sphere.
\end{definition}

The following definition is
a variant of the definition of a Cohen--Macaulay poset
given in \autocite[Section~3]{Baclawski80}.

\begin{definition}\label{8e20e56844}
  We call
  a finite poset~\(P\)
  \emph{Gorenstein*} if 
  for every \(p<q\) in \(P_{\bot,\top}\)
  the interval \((p,q)\) has the (integral) homology of a sphere\footnote{
    We regard \(S^{-1}=\emptyset\) as a sphere.
  }.
\end{definition}

By definition if \(P\) is a Gorenstein* finite poset
then \((p,q)\) is Gorenstein* for every \(p<q\in P_{\bot,\top}\).

\begin{lemma}\label{37f851549b}
  Any maximal chain of
  a Gorenstein* finite poset~\(P\) has the same length.
  In other words, \(P_{\bot,\top}\) admits a rank function\footnote{
    A \emph{rank function} on a finite poset~\(P\) is
    a function \(r\colon P\to\ZZ\)
    such that \(r(q)=r(p)+1\) if \(q\) is an immediate successor of~\(p\).
  }.
\end{lemma}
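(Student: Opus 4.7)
The plan is to prove, by induction on $\#[p,q]$, the refined claim that for every $p<q$ in $P_{\bot,\top}$ every maximal chain in the closed interval $[p,q]$ has length $n+2$, where $n$ is the dimension of the generalized homology sphere $\lvert(p,q)\rvert$ (unambiguous because the dimension of a generalized homology sphere is detected by its integral homology). Taking $p=\bot$ and $q=\top$ would then establish that all maximal chains of $P_{\bot,\top}$, hence all maximal chains of $P$, have the same length. Moreover, defining $r(x)$ to be the common length of maximal chains of $[\bot,x]$ would yield the desired rank function, since appending $x$ to a maximal chain of $[\bot,p]$ produces a maximal chain of $[\bot,x]$ whenever $x$ covers $p$.

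The base case is a covering relation $p\prec q$, in which $(p,q)=\emptyset=S^{-1}$, $n=-1$, and the unique maximal chain has length $1=n+2$. For the inductive step, I would fix a maximal chain $p=p_0<p_1<\cdots<p_k=q$ and argue as follows. The covering relation $p_0\prec p_1$ forces $(p,p_1)=\emptyset$, so any chain in $(p,q)$ containing $p_1$ is $\{p_1\}$ together with a chain in $(p_1,q)$; this identifies the link of the vertex $\{p_1\}$ in $\Ord((p,q))$ with $\Ord((p_1,q))$. Because $\lvert(p,q)\rvert$ is a homology $n$-manifold, this link has the homology of $S^{n-1}$. Combined with the observation recorded just after \cref{8e20e56844} that $(p_1,q)$ is itself Gorenstein*, this pins its dimension down as $n-1$. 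The inductive hypothesis applied to $[p_1,q]\subsetneq[p,q]$ then gives that the subchain $p_1<\cdots<p_k$ has length $n+1$, whence $k=n+2$.

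The hard part will be the combinatorial identification of the link of $\{p_1\}$ with $\Ord((p_1,q))$, which hinges crucially on $(p,p_1)=\emptyset$: without that, one would pick up an additional join factor of $\Ord((p,p_1))$ and the recursion would stall. Everything else reduces to two standard inputs, namely that vertex links in a homology $n$-manifold have the homology of $S^{n-1}$ and that the dimension of a generalized homology sphere is read off from its homology.
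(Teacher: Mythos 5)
Your argument contains a circularity that I don't see how to repair without essentially rewriting it. The pivotal step is the sentence ``Because $\lvert(p,q)\rvert$ is a homology $n$-manifold, this link has the homology of $S^{n-1}$.'' But \cref{8e20e56844} only tells you that each open interval has the \emph{homology} of a sphere; the stronger claim that $\lvert(p,q)\rvert$ is a homology manifold (equivalently, that $\lvert(p,q)\rvert$ is a \emph{generalized} homology sphere, which your formulation of the inductive claim already presupposes) is exactly the content of \cref{35d43b2aa2}, which appears \emph{after} \cref{37f851549b} in the paper and whose proof--a variant of Baclawski's Proposition~3.3--relies on the gradedness that \cref{37f851549b} is supposed to establish. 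Indeed, to show that $\lvert\Ord(P)\rvert$ is a homology $n$-manifold one computes links of chains as joins of order complexes of open intervals and needs the sphere dimensions of those intervals to add up correctly so that the link of every $j$-face has the homology of $S^{n-j-1}$; that dimension bookkeeping \emph{is} the rank function. Concretely, if one imagines a poset whose intervals have sphere homology but which is not graded (precisely the scenario the lemma rules out), $\Ord(P)$ would not even be pure, so not a homology manifold, and your appeal would fail; you are in effect assuming what you need to prove.

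The paper sidesteps all of this by citing Baclawski's Proposition~3.1, which works already at the Cohen--Macaulay level. The homological input needed there is much weaker than the homology-manifold condition: essentially just that $\tilde{H}_0$ of every interval $(p,q)$ vanishes whenever $\Ord((p,q))$ has dimension $\geq 1$. The standard argument runs as follows: take a minimal interval $[p,q]$ admitting maximal chains of unequal length; minimality forces every pair of such chains to be disjoint away from the endpoints; for $r\in(p,q)$, the quantity $\operatorname{rk}_{[p,r]}+\operatorname{rk}_{[r,q]}$ (well-defined because proper subintervals are graded) is constant along comparabilities, hence constant on connected components of $\Ord((p,q))$; and connectedness of $\Ord((p,q))$ (from $\tilde{H}_0=0$) then contradicts the existence of two chains of different lengths. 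If you want a self-contained proof rather than a citation, that is the route to take; the link-of-a-vertex computation you propose, while correct \emph{once} the Gorenstein* poset/complex dictionary is in place, cannot be used to establish that dictionary.
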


\begin{proof}
  This holds more generally
  for Cohen--Macaulay finite posets;
  see \autocite[Proposition~3.1]{Baclawski80}.
\end{proof}

These two definitions are compatible:

\begin{proposition}\label{35d43b2aa2}
  For a finite poset~\(P\),
  it is Gorenstein* if and only if
  \(\Ord(P)\) is Gorenstein*.
\end{proposition}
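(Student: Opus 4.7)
The plan is to compute the link of each simplex of $\Ord(P)$ in poset-theoretic terms and to translate the two Gorenstein* conditions through that identification. The key combinatorial observation is that, for any chain $\sigma=\{p_0<p_1<\cdots<p_k\}$ in $P$ regarded as a $k$-simplex of $\Ord(P)$, the link of $\sigma$ is canonically the join
\begin{equation*}
  \Ord((\bot,p_0))*\Ord((p_0,p_1))*\cdots*\Ord((p_{k-1},p_k))*\Ord((p_k,\top))
\end{equation*}
of order complexes of open intervals in $P_{\bot,\top}$, with the empty complex understood as $S^{-1}$. This is because a chain of $P$ disjoint from $\sigma$ whose union with $\sigma$ is again a chain decomposes uniquely according to which gap between consecutive elements of $\{\bot,p_0,\ldots,p_k,\top\}$ its elements fall into, and any such datum reassembles into a chain because elements in different gaps are automatically comparable.

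For the forward direction, if $P$ is Gorenstein* then each interval factor above has the homology of a sphere, and via $A*B\simeq\Sigma(A\wedge B)$ together with K\"unneth, a join of finitely many homology spheres is itself a homology sphere. The rank function supplied by \cref{37f851549b} ensures that the resulting dimensions add up exactly to $\dim|P|-\dim\sigma-1$, so $|\Ord(P)|$ is a generalized homology manifold. Combined with the empty-chain case, which returns $|P|$ itself as a homology sphere by the $(\bot,\top)$ instance of the hypothesis, this shows $\Ord(P)$ is Gorenstein*.

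For the converse, suppose $\Ord(P)$ is Gorenstein*. The case $(p,q)=(\bot,\top)$ is immediate. Otherwise let $\sigma$ be the chain in $P$ consisting of whichever of $p,q$ lie in $P$; the link of $\sigma$ is then a join of at most three order complexes of open intervals, one factor being $\Ord((p,q))$. The task reduces to a two-factor join lemma: if $A*B$ is a homology sphere with $A,B$ finite simplicial complexes (the empty complex understood as $S^{-1}$), then both $A$ and $B$ are homology spheres. Using $A*B\simeq\Sigma(A\wedge B)$ and K\"unneth, this is proved by matching the minimal and maximal degrees in which $\widetilde H_\bullet$ is nonzero on each side and ruling out torsion via the $\mathrm{Tor}$ summand, which would otherwise populate an extra total degree. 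Iterating deals with the (at most three-factor) link.

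The main obstacle will be this join lemma, specifically the bookkeeping that forbids a single $\ZZ$ from arising as a tensor of torsion groups coupled with a compensating $\mathrm{Tor}$ contribution; this is resolved by the extreme-degree comparison sketched above, which forces each factor to have free cyclic reduced homology concentrated in a single degree. Everything else reduces to routine manipulation with the rank function and the link identification.
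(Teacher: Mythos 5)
The paper omits the proof, citing it as a straightforward variant of Baclawski's Proposition~3.3, and your link-decomposition argument is precisely the expected route: the identity
\begin{equation*}
  \operatorname{lk}_{\Ord(P)}(\{p_0<\cdots<p_k\})
  \cong
  \Ord\bigl((\bot,p_0)\bigr)*\Ord\bigl((p_0,p_1)\bigr)*\cdots*\Ord\bigl((p_k,\top)\bigr)
\end{equation*}
is exactly the mechanism behind Baclawski's equivalence between the poset and order-complex conditions, and the rank-function bookkeeping you invoke (via \cref{37f851549b}) is what makes the dimensions telescope to \(\dim\Ord(P)-\dim\sigma-1\). So your overall structure is correct and matches the implicit intent of the paper.

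There is, however, a genuine gap in your sketch of the two-factor join lemma. You propose to ``match the minimal and maximal degrees'' of \(\widetilde H_\bullet(A)\) and \(\widetilde H_\bullet(B)\) and to ``rule out torsion via the \(\mathrm{Tor}\) summand, which would otherwise populate an extra total degree.'' This fails as stated: if \(\widetilde H_{a_{\max}}(A)\) and \(\widetilde H_{b_{\max}}(B)\) are both torsion of coprime orders (say \(\ZZ/2\) and \(\ZZ/3\)), then both \(\widetilde H_{a_{\max}}(A)\otimes\widetilde H_{b_{\max}}(B)\) and \(\mathrm{Tor}\bigl(\widetilde H_{a_{\max}}(A),\widetilde H_{b_{\max}}(B)\bigr)\) vanish, so the extremal degrees of \(\widetilde H_\bullet(A*B)\) are not controlled by \(a_{\max}+b_{\max}\), and no ``extra degree'' appears to contradict. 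The clean repair is to argue on the free part first: all tensor and \(\mathrm{Tor}\) terms built from torsion groups are torsion, so if both \(\widetilde H_\bullet(A)\) and \(\widetilde H_\bullet(B)\) were pure torsion then \(\widetilde H_\bullet(A*B)\) would be pure torsion, contradicting the \(\ZZ\) summand. Hence (say) \(\widetilde H_a(A)\supset\ZZ\) for a unique \(a\) (uniqueness because two free summands in distinct degrees would tensor against a free summand of \(B\) and put \(\ZZ\)'s in two degrees of the join). Tensoring this \(\ZZ\) against \(\widetilde H_j(B)\) embeds \(\widetilde H_j(B)\) as a direct summand of \(\widetilde H_{a+j+1}(A*B)\), which is \(\ZZ\) for \(j=m-a-1\) and \(0\) otherwise; this forces \(\widetilde H_\bullet(B)\cong\ZZ\) concentrated in a single degree, and symmetrically for \(A\). (Alternatively, use that the total Betti number is multiplicative under join over any field, compare over \(\QQ\) and every \(\FF_p\), and conclude each factor has total Betti number \(1\) over every field.) With this fix your argument is complete.
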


We omit the proof since
it is a straightforward variant of
\autocite[Proposition~3.3]{Baclawski80}.

\begin{corollary}\label{594984da1c}
  For a finite abstract simplicial complex,
  it is Gorenstein* if its underlying poset is Gorenstein*.
\end{corollary}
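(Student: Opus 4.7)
The plan is to reduce the corollary to \cref{35d43b2aa2} by identifying the order complex of the face poset with the barycentric subdivision of the given simplicial complex. Write \(K\) for the finite abstract simplicial complex and \(P\) for its underlying face poset (the poset of nonempty faces ordered by inclusion, per the convention fixed in the introduction).

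The first observation is the classical one: the order complex \(\Ord(P)\), whose simplices are the finite chains \(\sigma_0\subsetneq\sigma_1\subsetneq\dots\subsetneq\sigma_k\) of faces of~\(K\), is precisely the barycentric subdivision of~\(K\). Consequently, there is a canonical homeomorphism \(\lvert\Ord(P)\rvert\cong\lvert K\rvert\); in particular, the two have the same dimension, the same (integral) singular homology, and one is a homology manifold exactly when the other is.

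Now assume \(P\) is Gorenstein* as a poset. By \cref{35d43b2aa2}, the simplicial complex \(\Ord(P)\) is Gorenstein*, meaning that \(\lvert\Ord(P)\rvert\) is a generalized homology sphere in the sense of \cref{fe7384d8c5}. Transporting this property across the homeomorphism \(\lvert\Ord(P)\rvert\cong\lvert K\rvert\) shows that \(\lvert K\rvert\) is also a generalized homology sphere, so \(K\) itself is Gorenstein*.

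I do not anticipate any real obstacle here: once the identification of \(\Ord(P)\) with the barycentric subdivision is invoked, the statement is essentially a restatement of \cref{35d43b2aa2}. The only mild point to be careful about is the dimension bookkeeping around the empty face, which is handled by the convention fixed in the introduction (the empty face is excluded from \(P\), and \(S^{-1}=\emptyset\) is regarded as a sphere), so the degenerate cases \(K=\emptyset\) and \(\dim K=-1\) cause no trouble.
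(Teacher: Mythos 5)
Your proof is correct and is exactly the intended argument: the corollary is an immediate consequence of \cref{35d43b2aa2} once one recalls that the order complex of the face poset of \(K\) is the barycentric subdivision of \(K\), so the two have homeomorphic geometric realizations and hence one is a generalized homology sphere precisely when the other is. The paper leaves this unproved as immediate, and your reasoning matches that intent.
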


We later need the following lemma, as we prefer cohomology:

\begin{lemma}\label{94f6eb4680}
  For a finite poset,
  the Gorenstein* condition can
  be checked via cohomology instead of homology;
  i.e., 
  \(P\) is Gorenstein* if and only if
  \((p,q)\) has
  the cohomology of a sphere
  for \(p<q\) in \(P_{\bot,\top}\).
\end{lemma}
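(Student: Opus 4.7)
The plan is to reduce the statement, interval by interval, to the classical Universal Coefficient Theorem applied to a finite simplicial complex.

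First, I would fix $p<q$ in $P_{\bot,\top}$ and recall that both sides (homology/cohomology of a sphere) refer to the (co)homology of the geometric realization $\lvert(p,q)\rvert$. This is well defined on the cohomology side by \cref{afc4ccfff7}, which identifies sheaf cohomology on $\Fun((p,q),\Cat{S})$ with the topological cohomology of $\lvert(p,q)\rvert$. Since $(p,q)$ is a finite poset, $\lvert(p,q)\rvert$ is a finite CW complex and hence has finitely generated integral homology groups in each degree, vanishing above its (finite) dimension. So the lemma reduces to the purely topological claim: a finite CW complex $X$ has $H_*(X;\ZZ)$ isomorphic to that of some sphere $S^n$ (with $n=-1$ meaning $X=\emptyset$) if and only if $H^*(X;\ZZ)$ is isomorphic to that of some sphere.

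The forward direction is immediate from UCT: if $H_0\cong H_n\cong\ZZ$ and all other $H_i=0$, then $H^i(X;\ZZ)\cong\Hom(H_i,\ZZ)\oplus\Ext(H_{i-1},\ZZ)$ is $\ZZ$ in degrees $0,n$ and zero elsewhere. For the converse, assuming $H^0\cong H^n\cong\ZZ$ and all other $H^i=0$, I would run the same UCT formula degree by degree over~$\ZZ$, using that each $H_i$ splits as its free part plus its torsion part. The vanishing of $H^i$ for $0<i<n$ forces both the free part of $H_i$ and the torsion part of $H_{i-1}$ to vanish; the vanishing of $H^i$ for $i>n+1$ does the same in the upper range; then $H^n\cong\ZZ$ combined with the already-obtained vanishing of the torsion part of $H_{n-1}$ forces $H_n$ to have free part~$\ZZ$; and finally $H^{n+1}=0$ forces the torsion part of $H_n$ to vanish, so $H_n\cong\ZZ$. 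A symmetric case $n=-1$ ($X=\emptyset$) is trivial.

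Applying this to each interval $(p,q)$ independently (with the integer $n$ possibly varying, but forced to be the same on both sides by the above bookkeeping) yields the stated equivalence. There is no real obstacle here; the only thing to be slightly careful about is the corner case $n=-1$ and keeping track that the isomorphism type of the homology of a sphere is determined by its cohomology on a finite complex, which UCT delivers without further input.
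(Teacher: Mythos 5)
Your proof is correct. The paper's own proof is much terser: it observes that $P$ is Gorenstein* iff $P^{\op}$ is (trivially, since $\lvert (p,q)\rvert \cong \lvert (p,q)^{\op}\rvert$), and then cites the self-duality of the $\infty$-category of perfect complexes over $\ZZ$. Unwound, that abstract argument says: the (reduced) chain complex $C$ of $\Ord((p,q))$ is a perfect $\ZZ$-complex, so ``$C$ has the homology of a sphere'' is equivalent to $C\simeq\ZZ\oplus\ZZ[n]$ (using that $\ZZ$ is hereditary), dualization sends such complexes to such complexes, and $C^{\vee\vee}\simeq C$ makes the implication reversible. Your degree-by-degree UCT bookkeeping --- splitting each $H_i$ into free and torsion parts and showing that the vanishing of the intermediate $H^i$ forces the free part of $H_i$ and the torsion part of $H_{i-1}$ to vanish, with the torsion-freeness of $\ZZ$ ruling out $\Ext$ contributions in degrees $0$ and $n$ --- is precisely the elementary content of that self-duality statement, carried out by hand. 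Both arguments hinge on the same two facts (finite generation of the homology of a finite complex, and the involutivity of $\ZZ$-linear duality for finitely generated groups); yours trades a clean abstract citation for explicit, checkable computations, which is a perfectly reasonable trade-off here.
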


\begin{proof}
  By definition
  \(P\) is Gorenstein*
  if and only if so is \(P^{\op}\).
  Hence the desired result
  follows from the self-duality
  of the \(\infty\)-category
  of perfect complexes over~\(\ZZ\).
\end{proof}

\subsection{A vanishing result}\label{ss-van}


\begin{definition}\label{96f462d60f}
  Let \(P\) be a poset and \(E\) a spectrum.
  For \(p\leq q\) in~\(P\),
  we let \(E_{[p,q]}\in\Fun(P,\Cat{Sp})\)
  denote the functor obtained from
  the constant functor \(E\in\Fun([p,q],\Cat{Sp})\)
  by left Kan extending along
  \([p,q]\hookrightarrow P_{\leq q}\)
  and then right Kan extending along
  \(P_{\leq q}\hookrightarrow P\).
  If \(E\in\D(\ZZ)\),
  we use the same symbol
  for the element in \(\Fun(P,\D(\ZZ))\)
  determined similarly.
\end{definition}

\begin{proposition}\label{271a46a032}
  For a Gorenstein* finite poset~\(P\),
  for every \(p<q\) in \(P_{\top}\)
  the cohomology \(\Gamma(P_{\top};\ZZ_{[p,q]})\) vanishes.
\end{proposition}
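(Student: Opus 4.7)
The plan is to reduce the vanishing to a standard acyclicity statement via a single recollement fiber sequence. Set $X := (P_\top)_{\leq q}$, so $|X|$ is a contractible cone on $|Q|$ for $Q := (P_\top)_{<q}$; note that $Q$ (equal to either $P_{<q}$ or $P$) inherits the Gorenstein* property from $P$. Inside $X$, the interval $[p,q]$ is upward-closed with closed complement $Q_{\not\geq p}$.

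The open/closed recollement in $\Fun(X, \Cat{Sp})$ gives a fiber sequence $j_!\ZZ \to \ZZ \to i_*\ZZ$ attached to $j\colon [p,q] \hookrightarrow X$ and $i\colon Q_{\not\geq p} \hookrightarrow X$; pushing this forward exactly along the closed inclusion $X \hookrightarrow P_\top$ produces in $\Fun(P_\top, \Cat{Sp})$ the fiber sequence $\ZZ_{[p,q]} \to \ZZ_X \to \ZZ_{Q_{\not\geq p}}$, where $\ZZ_S$ denotes extension by zero from a downward-closed $S \subset P_\top$ (and the identification with $\ZZ_{[p,q]}$ comes from unwinding the definition of the latter as an iterated Kan extension). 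For such $S$, the closed-inclusion adjunction $(i_S)^* \dashv (i_S)_*$ together with \cref{afc4ccfff7} identifies $\Gamma(P_\top; \ZZ_S)$ with $\Gamma(|S|; \ZZ)$, so applying $\Gamma(P_\top; -)$ reduces the problem to showing that the restriction map $\ZZ \simeq \Gamma(|X|; \ZZ) \to \Gamma(|Q_{\not\geq p}|; \ZZ)$ is an equivalence.

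It thus suffices to verify that $|Q_{\not\geq p}|$ is nonempty, connected, and acyclic; then both sides are $\ZZ$ concentrated in degree zero, the restriction of the constant section is the identity, and the fiber vanishes. The point is that $|Q|$ is a homology sphere, in fact a homology manifold: the link of $p$, being the join $|Q_{<p}| * |Q_{>p}|$ of two Gorenstein*-provided homology spheres, is itself a homology sphere of codimension one. Alexander duality in $|Q|$ applied to the open star of $p$---a contractible open homology ball---gives that the antistar $|Q \setminus \{p\}|$ is acyclic. An inductive application of Quillen's theorem A in homology form then shows that $Q_{\not\geq p} \hookrightarrow Q \setminus \{p\}$ is a homology equivalence on realizations: for $b \in Q_{\not\geq p}$ the Quillen fiber $(Q_{\not\geq p})_{\leq b}$ has maximum $b$ and is contractible, while for $b \in Q_{>p}$ it equals $(Q_{<b})_{\not\geq p}$, which is acyclic by induction on $|Q|$ applied to the smaller Gorenstein* poset $Q_{<b}$.

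The main obstacle will be this inductive homological Quillen A step, which requires that acyclic (rather than contractible) fibers suffice for a homology equivalence---a standard point via the Leray spectral sequence, but one that needs careful bookkeeping. A cleaner alternative that I would attempt first is to bypass the topological detour entirely and prove the sub-claim purely sheaf-theoretically by running the same recollement directly on $Q$: the resulting long exact sequence $\Gamma(Q; \ZZ_{Q_{\geq p}}) \to \Gamma(|Q|; \ZZ) \to \Gamma(|Q_{\not\geq p}|; \ZZ)$, combined with an independent computation of $\Gamma(Q; \ZZ_{Q_{\geq p}}) \simeq \ZZ[-n]$ exploiting the cone structure of $Q_{\geq p}$ together with the Gorenstein*-induced sphericity of the link, would force the desired acyclicity.
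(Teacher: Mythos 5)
Your proposal is correct, and the first half --- the recollement fiber sequence $\ZZ_{[p,q]}\to\ZZ_X\to\ZZ_{Q_{\not\geq p}}$ in $\Fun(P_\top,\Cat{Sp})$, the identification $\Gamma(P_\top;\ZZ_S)\simeq\Gamma(\lvert S\rvert;\ZZ)$ for downward-closed $S$, and the reduction to acyclicity of $\lvert Q_{\not\geq p}\rvert$ --- matches the paper's setup in spirit, though the paper arranges it as a biCartesian square in which the bottom row lives over $P$ rather than $P_\top$ (so the top-left corner is the pullback of $\Gamma(P;\ZZ_{\geq p})\to\Gamma(P;\ZZ)\leftarrow\Gamma(P_\top;\ZZ)$). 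Where you genuinely diverge is in establishing the acyclicity: the paper computes $\Gamma(P;\ZZ_{\geq p})$ as a relative cohomology group, applies Lefschetz duality for the homology manifold $\lvert P\rvert$ to identify it with $\Sigma^{1-r(\top)}\ZZ$ (using that $\lvert P_{\geq p}\rvert$ is a contractible cone), and then reads off that the two legs of the square are the two summand inclusions into $\Gamma(P;\ZZ)\simeq\ZZ\oplus\Sigma^{1-r(\top)}\ZZ$, forcing the pullback to vanish. You instead show $\lvert Q\setminus\{p\}\rvert$ is acyclic via Alexander/Poincar\'e--Lefschetz duality and then compare $\lvert Q_{\not\geq p}\rvert$ to $\lvert Q\setminus\{p\}\rvert$ by an inductive application of the homological Quillen fiber lemma, using that $(Q_{<b})_{\not\geq p}$ is acyclic for $b>p$ by the inductive hypothesis applied to the smaller Gorenstein* poset $Q_{<b}$. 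This works and is a valid alternative, but it imports the Quillen fiber lemma with acyclic (rather than contractible) fibers, an extra layer of machinery and an induction that the paper's argument avoids. Notably, your ``cleaner alternative'' --- compute $\Gamma(Q;\ZZ_{Q_{\geq p}})\simeq\Sigma^{-n}\ZZ$ via the cone on the link and deduce the acyclicity from the long exact sequence --- is essentially the paper's Lefschetz-duality argument, so your instinct that it is the shorter path was right.

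Two small points to watch if you write the Quillen-A route out in full: (a) you need the fibers to be nonempty (acyclic in reduced homology, i.e. $\tilde{H}_*=0$, presupposes this), which holds because a Gorenstein* poset is never a singleton, so $(Q_{<b})_{\not\geq p}\neq\emptyset$; and (b) the Mayer--Vietoris/Alexander computation of $\tilde{H}_*(\lvert Q\setminus\{p\}\rvert)$ uses that the local orientation map $H_n(\lvert Q\rvert)\to H_n(\lvert Q\rvert,\lvert Q\setminus\{p\}\rvert)$ is an isomorphism, which is exactly the orientability of the homology sphere and is the same geometric input the paper packages as Lefschetz duality. Neither is a gap, just bookkeeping you correctly flagged.
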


We later prove the converse.

\begin{proof}
  Since \(\Gamma(P_{\top};\ZZ_{[p,q]})
  \simeq\Gamma((P_{\top})_{\leq q};\ZZ_{[p,q]})\) holds
  and \((P_{\top})_{<q}\) is also Gorenstein*,
  we can assume \(q=\top\).
  By \cref{35d43b2aa2}, there is a unique rank function
  \(r\colon P_{\bot,\top}\to\ZZ\) satisfying \(r(\bot)=-1\).
  Then \(\lvert P\rvert\)
  is a generalized homology \((r(\top)-1)\)-sphere.
  If \(r(\top)=1\) holds,
  \(P\) is the discrete poset with two elements
  and the result can be directly checked.
  So we henceforth assume \(r(\top)>1\).

  In what follows,
  we repeatedly use \cref{afc4ccfff7}.
  Let \(\ZZ_{\geq p}\)
  denote the left Kan extension
  of the constant functor
  with value \(\ZZ\) along \(P_{\geq p}\hookrightarrow P\).
  Then the pullback diagram
  \begin{equation*}
    \begin{tikzcd}
      \Gamma(P_{\top};\ZZ_{[p,\top]})\ar[r]\ar[d]&
      \Gamma(P_{\top};\ZZ)\ar[d,"f"]\\
      \Gamma(P;\ZZ_{\geq p})\ar[r,"g"]&
      \Gamma(P;\ZZ)
    \end{tikzcd}
  \end{equation*}
  can be formed in \(\D(\ZZ)\).
  Here \(f\) is induced
  by \(P\to P_{\top}\).
  As this morphism of posets induces an isomorphism
  on \(H^0(\lvert\X\rvert;\ZZ)\),
  we see that \(f\) induces an isomorphism on~\(\pi_0\).
  On the other hand,
  \(\Gamma(P;\ZZ_{\geq p})\)
  is computed as the relative cohomology
  \(\fib(\Gamma(P;\ZZ)\to\Gamma(P\setminus P_{\geq p};\ZZ))\).
  By the Lefschetz duality theorem,
  it is the dual of \(\Sigma^{r(\top)-1}\Gamma(P_{\geq p};\ZZ)\)
  in \(\D(\ZZ)\),
  which is \(\Sigma^{1-r(\top)}\ZZ\),
  and \(g\) induces an isomorphism on~\(\pi_{1-r(\top)}\)
  as \(\lvert P_{\geq p}\rvert\) is connected.
  Therefore,
  \(f\) and~\(g\)
  can be identified with
  the two direct summand inclusions
  of \(\Gamma(P;\ZZ)\simeq\ZZ\oplus\Sigma^{1-r(\top)}\ZZ\),
  from which \(\Gamma(P_{\top};\ZZ_{[p,\top]})\simeq0\) follows.
\end{proof}

\begin{proof}[Proof of
  \(\text{\cref{i-gorenstein}}\Rightarrow\text{\cref{i-vanishing}}\)
  of \cref{main}]
  This follows from \cref{271a46a032}.
\end{proof}

We note that
this vanishing also holds
when \(\ZZ\) is replaced by~\(\SS\):

\begin{lemma}\label{sz}
  For a finite poset~\(P\)
  and \(p\leq q\) in \(P\),
  the cohomology \(\Gamma(P;\ZZ_{[p,q]})\)
  vanishes if and only if
  so does \(\Gamma(P;\SS_{[p,q]})\).
\end{lemma}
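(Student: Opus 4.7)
The plan is to deduce both directions from the stable Hurewicz theorem applied to $X := \Gamma(P; \SS_{[p,q]})$. I would begin with an explicit pointwise description of the functors $\SS_{[p,q]}$ and $\ZZ_{[p,q]}$. The inclusion $[p,q] \hookrightarrow P_{\leq q}$ is that of a cosieve (upward-closed subset) and $P_{\leq q} \hookrightarrow P$ is that of a sieve; both left Kan extension along a cosieve inclusion and right Kan extension along a sieve inclusion are given pointwise by ``extension by zero''. Hence $\SS_{[p,q]} \in \Fun(P, \Cat{Sp})$ takes value $\SS$ on $[p,q]$ and $0$ elsewhere, and likewise $\ZZ_{[p,q]} \in \Fun(P, \D(\ZZ))$ takes value $\ZZ$ on $[p,q]$ and $0$ elsewhere. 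In particular, we obtain a natural identification $\ZZ_{[p,q]} \simeq \SS_{[p,q]} \otimes \ZZ$ in $\Fun(P, \D(\ZZ))$, where $\otimes$ denotes the pointwise application of $\X \otimes \ZZ \colon \Cat{Sp} \to \D(\ZZ)$.

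Next, since $\X \otimes \ZZ$ is a left adjoint between stable $\infty$-categories, it is exact and so preserves finite limits; $P$ being finite, I would commute it with $\projlim_P$ to compute $\Gamma(P; \ZZ_{[p,q]}) \simeq \projlim_P(\SS_{[p,q]} \otimes \ZZ) \simeq \Gamma(P; \SS_{[p,q]}) \otimes \ZZ = X \otimes \ZZ$ in $\D(\ZZ)$. Consequently, $\Gamma(P; \ZZ_{[p,q]}) \simeq 0$ if and only if $X \otimes \ZZ \simeq 0$, i.e., the integral homology $H_*(X; \ZZ)$ vanishes.

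Finally, $X$ is bounded below as a spectrum, since $\SS_{[p,q]}$ takes values in connective spectra and a finite limit of bounded-below spectra remains bounded below. The stable Hurewicz theorem then asserts that a bounded-below spectrum vanishes if and only if its integral homology does, closing the argument. The only subtle point is the pointwise identification $\ZZ_{[p,q]} \simeq \SS_{[p,q]} \otimes \ZZ$: the functor $\X \otimes \ZZ$ is a left adjoint and so need not commute with the right Kan extension step in the construction of $\ZZ_{[p,q]}$; however, the explicit ``extension by zero'' formula makes both sides manifestly equal to the evident ``$\ZZ$ on $[p,q]$, $0$ elsewhere'' functor, so I expect no further obstacle.
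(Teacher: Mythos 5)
Your proposal is correct and is essentially the proof given in the paper: both reduce the statement to the identification $\Gamma(P;\ZZ_{[p,q]})\simeq\Gamma(P;\SS_{[p,q]})\otimes\ZZ$ (valid since $P$ is finite, so $\projlim_P$ is a finite limit and commutes with the exact functor $\X\otimes\ZZ$), observe that $\Gamma(P;\SS_{[p,q]})$ is bounded below, and invoke the stable Hurewicz theorem in the form "a nonzero bounded-below spectrum has nonzero integral homology." You simply spell out the intermediate steps (the extension-by-zero description of $\SS_{[p,q]}$ and $\ZZ_{[p,q]}$) that the paper compresses into "both of which follow from the finiteness of $P$."
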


\begin{proof}
  Note that
  if a spectrum \(E\) is nonzero and bounded below,
  \(E\otimes\ZZ\) is also nonzero;
  this can be seen by considering
  the smallest \(i\) such that \(\pi_iE\) is nonzero.
  Hence the desired result follows from
  \(\Gamma(P;\ZZ_{[p,q]})\simeq\Gamma(P;\SS_{[p,q]})\otimes\ZZ\)
  and the fact that \(\Gamma(P;\SS_{[p,q]})\) is bounded below,
  both of which follow from the finiteness of~\(P\).
\end{proof}

\section{Verdier duality for finite posets}\label{s-main}

\subsection{Recollements}\label{ss-recolle}

We refer the reader to \autocite[Section~A.8]{LurieHA}
for a discussion on recollements using \(\infty\)-categories.
When we say \(\cat{C}_0\) and \(\cat{C}_1\) form
a recollement, \(\cat{C}_0\) is supposed to be the ``closed'' part;
i.e., the \(\cat{C}_1\)-localization annihilates~\(\cat{C}_0\).
We abuse terminology
to say the two functors \(\cat{C}_0\hookrightarrow\cat{C}\) and
\(\cat{C}_1\hookrightarrow\cat{C}\) determine a recollement
when \(\cat{C}\) is a recollement of their images.

We recall the following standard fact:

\begin{lemma}\label{a9ca3865c3}
  Consider a presentable stable \(\infty\)-category \(\cat{C}\)
  and suppose that
  \(j\colon P_1\hookrightarrow P\) be an upward-closed full subposet
  with complement \(i\colon P_0\hookrightarrow P\).
  Let \(i_*\) and~\(j_*\) denote the right Kan extension along~\(i\) and~\(j\)
  and \(j_!\) the left Kan extension along~\(j\).
  Then the following hold for the functor \(\infty\)-category \(\Fun(P,\cat{C})\):
  \begin{enumerate}
    \item
      \label{i-sheaf}
      The functors \(i_*\) and \(j_*\) form a recollement.
    \item
      \label{i-cosheaf}
      The functors \(j_!\) and \(i_*\) also form a recollement.
  \end{enumerate}
\end{lemma}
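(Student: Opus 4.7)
My plan is to verify Lurie's definition of a recollement (\autocite[Definition~A.8.1]{LurieHA}) directly in each case. The common ingredients are the adjunctions $i^* \dashv i_*$ and $j_! \dashv j^* \dashv j_*$, the fully faithfulness of $i_*$, $j_*$, and $j_!$ (each being a Kan extension along a fully faithful functor), and two key vanishings coming from the upward-closedness of $P_1$: for any $G \colon P_0 \to \cat{C}$ and $q \in P_1$, the set $\{p \in P_0 : p \geq q\}$ is empty, so $(i_* G)(q) = 0$; symmetrically, $(j_! F)(p) = 0$ for any $F \colon P_1 \to \cat{C}$ and $p \in P_0$. In particular $j^* i_* \simeq 0$ and $i^* j_! \simeq 0$.

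For \cref{i-sheaf}, I take the reflectors to be $L_0 = i_* i^*$ (onto the closed part $\operatorname{Im}(i_*)$) and $L_1 = j_* j^*$ (onto the open part $\operatorname{Im}(j_*)$). Both are left exact since the restrictions $i^*, j^*$ preserve limits and $i_*, j_*$ are right adjoints. The annihilation condition is precisely $j^* i_* \simeq 0$. Joint conservativity follows because equivalences in $\Fun(P, \cat{C})$ are detected pointwise and $P = P_0 \sqcup P_1$.

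For \cref{i-cosheaf}, the closed part is $\operatorname{Im}(j_!)$ and the open part is $\operatorname{Im}(i_*)$; the open-part reflector $L_1 = i_* i^*$ again works and annihilates $\operatorname{Im}(j_!)$ via $i^* j_! \simeq 0$. The subtle point is producing a left exact reflector $L_0$ onto $\operatorname{Im}(j_!)$. Since $\operatorname{Im}(j_!)$ is precisely the full subcategory of functors vanishing on $P_0$, it is closed under all small limits in $\Fun(P, \cat{C})$ and accessible, so the adjoint functor theorem supplies $L_0$; concretely, writing $i_!$ for the left Kan extension along $i$, one finds $L_0 F \simeq \cofib(i_! i^* F \to F)$. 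Left exactness of $L_0$ is automatic because any colimit-preserving functor between presentable stable $\infty$-categories is exact (pullback and pushout squares coincide in the stable setting). Conservativity then follows from the two-out-of-three property applied to the fiber sequence $i_! i^* F \to F \to L_0 F$: from $L_1 f$ being an equivalence one deduces that $f|_{P_0}$ is an equivalence and hence so is $i_! i^* f$, and combined with $L_0 f$ being an equivalence this forces $f$ itself to be an equivalence.

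The main obstacle is part \cref{i-cosheaf}: unlike in the sheaf case, the closed-part reflector is not a restriction functor but must be produced abstractly, and verifying its left exactness depends essentially on the stability hypothesis on $\cat{C}$.
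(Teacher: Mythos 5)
Your proof is correct. It rests on the same key observation as the paper's — that because $P_0$ is a sieve and $P_1$ a cosieve, and $\cat{C}$ is pointed (indeed stable), the right Kan extension $i_*$ and the left Kan extension $j_!$ are both extension-by-zero functors, so $j^*i_* \simeq 0$, $i^*j_! \simeq 0$, and $\operatorname{Im}(i_*) = \{F : F|_{P_1}\simeq 0\}$, $\operatorname{Im}(j_!) = \{F : F|_{P_0}\simeq 0\}$. The difference is purely one of packaging: the paper feeds this observation into \autocite[Proposition~A.8.20]{LurieHA}, which takes care of producing the closed-part reflector in the stable setting automatically, whereas you verify \autocite[Definition~A.8.1]{LurieHA} by hand, constructing $L_0 \simeq \cofib(i_!i^* \to \id)$ explicitly and checking its exactness and the joint conservativity via the resulting cofiber sequence. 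Your route is longer but more self-contained; the paper's is a two-line citation. Both are sound.
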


\begin{proof}
  Both can be easily checked
  by using
  \autocite[Proposition~A.8.20]{LurieHA}
  and observing that
  \(i_*\) and \(j_!\) are
  given as the extension-by-zero functors.
\end{proof}

\begin{lemma}\label{2953dbadf2}
  Consider a left exact functor \(\cat{C}\to\cat{C}'\)
  and suppose that
  \(\cat{C}\) and \(\cat{C}'\) are recollements of
  \(\cat{C}_0\) and~\(\cat{C}_1\)
  and
  \(\cat{C}_0'\) and~\(\cat{C}_1'\), respectively.
  Furthermore, assume the following:
  \begin{itemize}
    \item
      The functor \(f\) restricts to define
      equivalences \(\cat{C}_0\to\cat{C}_1\)
      and \(\cat{C}_0'\to\cat{C}_1'\).
    \item
      The morphism \(L_0'\circ f\to f\circ L_0\)
      obtained by the above condition is an equivalence.
  \end{itemize}
  Then \(f\) itself is an equivalence.
\end{lemma}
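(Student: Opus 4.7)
The plan is to exploit the fracture-pullback decomposition of objects in a stable recollement. Each \(X \in \cat{C}\) is canonically the limit of its fracture square with corners \(X\), \(L_0 X\), \(L_1 X\), \(L_0 L_1 X\), and likewise in \(\cat{C}'\). Since \(f\) is left exact, sends \(\cat{C}_i\) into \(\cat{C}_i'\) for \(i = 0, 1\) by the first hypothesis, and commutes with \(L_0\) by the second, the only missing piece is compatibility with \(L_1\); this is encoded in the canonical comparison \(\alpha \colon L_1' f X \to f L_1 X\) induced by the universal property of the \(L_1'\)-unit together with the fact that \(f L_1 X \in \cat{C}_1'\).

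A fiber-sequence argument establishes that \(\alpha\) is an equivalence. From the cofiber sequence
\[
\fib(\eta') \to \fib(f \eta) \to \fib(\alpha)
\]
associated to the composite \(f X \xrightarrow{\eta'} L_1' f X \xrightarrow{\alpha} f L_1 X\), where \(\eta, \eta'\) are the \(L_1\)- and \(L_1'\)-units, both \(\fib(\eta')\) and \(\fib(f \eta) \simeq f(\fib(\eta))\) lie in \(\cat{C}_0'\)---the former by the standard fiber sequence \(i'_* i'^! \to \id \to L_1'\) in a stable recollement, the latter because \(f\) sends the closed part \(\cat{C}_0\) into \(\cat{C}_0'\). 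As \(\cat{C}_0'\) is closed under cofibers, \(\fib(\alpha) \in \cat{C}_0'\); simultaneously, \(\fib(\alpha) \in \cat{C}_1'\) because \(\alpha\) is a map in \(\cat{C}_1'\), which is closed under fibers. In any recollement the intersection \(\cat{C}_0' \cap \cat{C}_1'\) reduces to \(\{0\}\), forcing \(\fib(\alpha) = 0\).

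With \(f\) now preserving the entire fracture square and restricting to equivalences on both parts, the conclusion is formal: essential surjectivity comes from lifting the fracture data of any \(Z \in \cat{C}'\) back to \(\cat{C}\) via the inverses of \(f|_{\cat{C}_0}\) and \(f|_{\cat{C}_1}\) (using the \(L_0\)-compatibility to lift the gluing map) and taking the corresponding pullback in \(\cat{C}\); fully faithfulness comes from the parallel pullback decomposition of mapping spaces. The main obstacle is the middle fiber-sequence argument, which requires correctly identifying the fibers of both \(L_1\)-units with \(i_* i^!\) and invoking the standard vanishing \(\cat{C}_0' \cap \cat{C}_1' = \{0\}\) in a recollement.
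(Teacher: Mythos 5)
The paper's proof of this lemma is a one-line citation of \autocite[Proposition~A.8.14]{LurieHA}, a recognition criterion for equivalences of recollements valid for arbitrary \(\infty\)-categories and left exact functors. Your proof is correct and takes a genuinely different route: instead of treating the whole criterion as a black box, you isolate the one nonformal point, namely that the comparison \(\alpha\colon L_1'fX\to fL_1X\) is invertible, and extract it from the octahedron \(\fib(\eta')\to\fib(f\eta)\to\fib(\alpha)\), whose first two terms land in \(\cat{C}_0'\) because \(\fib(\id\to L_1)\simeq i_*i^!\), while \(\fib(\alpha)\) lands in \(\cat{C}_1'\) since \(\cat{C}_1'\) is closed under limits, so \(\fib(\alpha)\in\cat{C}_0'\cap\cat{C}_1'=0\). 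A nice byproduct, invisible from the paper's citation, is that this explains the asymmetry in the hypotheses: \(L_1\)-compatibility is automatic once \(f\) is exact and preserves the two pieces, whereas the analogous fiber \(\fib(\id\to L_0)\simeq j_!j^*\) lands in the image of \(j_!\), not in \(\cat{C}_1\), so \(L_0\)-compatibility must be assumed. The one caveat is that your argument uses stability throughout (upgrading left exactness to exactness, fiber sequences, \(\cat{C}_0'\) closed under cofibers), whereas HA~A.8.14 and the lemma as stated are formulated without a stability hypothesis; since every application in the paper is to stable \(\infty\)-categories this is harmless, but it should be said explicitly if your proof is to replace the citation. (Also note the statement as printed has a small typo, reading ``\(\cat{C}_0\to\cat{C}_1\) and \(\cat{C}_0'\to\cat{C}_1'\)'' where it must mean ``\(\cat{C}_0\to\cat{C}_0'\) and \(\cat{C}_1\to\cat{C}_1'\)''; you read it correctly.)
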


\begin{proof}
  This follows from \autocite[Proposition~A.8.14]{LurieHA}.
\end{proof}

\subsection{Duality and the vanishing condition}\label{ss-fin}

We prove 
\cref{i-verdier,i-vanishing} in \cref{main} are equivalent.
We start with a pointwise description of~\(\DD\):

\begin{lemma}\label{f6072744fd}
  Let \(K\) be a finite \(\infty\)-category
  so that \(\projlim\colon\Fun(K,\Cat{Sp})\to\Cat{Sp}\) is
  a morphism in \(\Cat{Pr}_{\st}\).
  Then the potential duality functor
  \begin{equation*}
    \DD\colon
    \Fun(K,\Cat{Sp})\longrightarrow
    [\Fun(K,\Cat{Sp}),\Cat{Sp}]\simeq\Fun(K^{\op},\Cat{Sp})
  \end{equation*}
  induced by the composite
  \(
  \Fun(K,\Cat{Sp})
  \otimes\Fun(K,\Cat{Sp})
  \xrightarrow{\X\otimes\X}
  \Fun(K,\Cat{Sp})
  \xrightarrow{\projlim}\Cat{Sp}
  \) (cf.~\cref{e-90ab6b96})
  is objectwise given by
  \begin{equation}
    \label{e-8b7f0497}
    \phantom,
    F\longmapsto\biggl(
      k\longmapsto\projlim_{l\in K}\Map(k,l)\otimes F(l)
    \biggr),
  \end{equation}
  where \(\otimes\) denotes the copower.
\end{lemma}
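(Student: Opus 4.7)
The plan is to unwind the construction in~\cref{e-90ab6b96} and evaluate the resulting colimit-preserving functor on the corepresentable generators of $\Fun(K,\Cat{Sp})$. Write $A = \Fun(K,\Cat{Sp})$, and for each $k \in K$ let $j_k \in A$ denote the left Kan extension of $\SS \in \Fun(\{k\},\Cat{Sp})$ along $\{k\}\hookrightarrow K$, so that $j_k(l) \simeq \Sigma^{\infty}_{+}\Map(k,l)$. These objects are compact with $\Map_{A}(j_k,F) \simeq F(k)$ for every $F \in A$, and together they exhibit $A$ as the $K^{\op}$-indexed colimit in $\Cat{Pr}_{\st}$ of the constant diagram with value~$\Cat{Sp}$; correspondingly, the map
\begin{equation*}
  [A,\Cat{Sp}] \longrightarrow \Fun(K^{\op},\Cat{Sp}),\qquad \phi \longmapsto \bigl(k \mapsto \phi(j_k)\bigr),
\end{equation*}
is the canonical equivalence appearing in the statement.

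Next I trace $\DD(F)$ through~\cref{e-90ab6b96}: it is the colimit-preserving functor $A \to \Cat{Sp}$ given by $G \mapsto e(F \otimes G)$, where $e$ is the composite in the statement. Since the commutative algebra structure on $A$ in $\Cat{Pr}_{\st}$ is the pointwise one induced by the diagonal $K \to K \times K$ and the algebra structure of the unit~$\Cat{Sp}$, the multiplication $A \otimes A \to A$ is levelwise tensor, so $\DD(F)(G) \simeq \projlim_{l \in K}\bigl(F(l) \otimes G(l)\bigr)$, a composite of colimit-preserving functors in~$G$.

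Combining these two steps, $\DD(F)$ corresponds under the equivalence of the first paragraph to
\begin{equation*}
  k \longmapsto \DD(F)(j_k) \simeq \projlim_{l \in K} F(l) \otimes \Sigma^{\infty}_{+}\Map(k,l) \simeq \projlim_{l \in K}\Map(k,l) \otimes F(l),
\end{equation*}
where the last equivalence uses that the $\Cat{S}$-copower on $\Cat{Sp}$ is $X \otimes E \simeq \Sigma^{\infty}_{+}X \otimes E$. This is exactly the asserted formula. The main, if modest, delicate point is the variance bookkeeping in the first paragraph: one must verify that the canonical equivalence $[A,\Cat{Sp}] \simeq \Fun(K^{\op},\Cat{Sp})$ really is realized by evaluation on the corepresentables $j_k$ with their contravariant functoriality in $k$, since this $K \leftrightarrow K^{\op}$ swap is precisely what makes the target of $\DD$ a contravariant functor category.
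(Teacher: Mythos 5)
Your proof is correct and takes essentially the same route as the paper's: both unwind the two maps in the construction of~\(\DD\) and evaluate objectwise, with the paper passing through \(\Fun(K^{\op}\times K\times K,\Cat{Sp})\) and you making the same computation explicit by evaluating on the corepresentable generators \(j_k\) under the identification \([\Fun(K,\Cat{Sp}),\Cat{Sp}]\simeq\Fun(K^{\op},\Cat{Sp})\).
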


\begin{proof}
  By definition,
  \(\DD\) is the composite
  \(\Fun(K,\Cat{Sp})\to
  \Fun(K^{\op}\times K\times K,\Cat{Sp})\to
  \Fun(K^{\op},\Cat{Sp})\),
  where the first and second maps are objectwise given by
  \(F\mapsto((k,l,m)\mapsto\Map(k,l)\otimes F(m))\)
  and
  \(G\mapsto\bigl(k\mapsto\projlim_lG(k,l,l)\bigr)\), respectively.
\end{proof}

\begin{proof}[Proof of
  \(\text{\cref{i-vanishing}}\Rightarrow\text{\cref{i-verdier}}\)
  of \cref{main}]
  We show by induction on~\(\#P\)
  that \(\DD_P\) is an equivalence,
  which is equivalent to the Verdier property
  by \cref{a3ad5064a0}.
  If \(P=\emptyset\), the claim is obvious.
  We assume \(\#P>0\) and pick a maximal element
  \(m\in P\). Since \(\{m\}\) is upward closed,
  by applying \cref{a9ca3865c3}
  to \(j\colon\{m\}\hookrightarrow P\)
  and \(i\colon P\setminus\{m\}\hookrightarrow P\),
  we can form two recollements, which fit into a diagram
  \begin{equation}
    \label{e-a7e63710}
    \begin{tikzcd}
      \Fun(P\setminus\{m\},\Cat{Sp})\ar[r,"i_*",hook]\ar[d,"\DD_{P\setminus\{m\}}"']&
      \Fun(P,\Cat{Sp})\ar[d,"\DD_{P}"']&
      \Cat{Sp}\ar[l,"j_*"',hook']\ar[d,dashed]\\
      \Fun((P\setminus\{m\})^{\op},\Cat{Sp})\ar[r,"i_+",hook]&
      \Fun(P^{\op},\Cat{Sp})&
      \Cat{Sp}\rlap,\ar[l,"j_?"',hook']
    \end{tikzcd}
  \end{equation}
  where the identification
  \(\Fun(\{m\},\Cat{Sp})
  \simeq\Cat{Sp}\simeq\Fun(\{m\}^{\op},\Cat{Sp})\)
  is made and
  \(j_?\) denotes the right adjoint of~\(j^+\).
  \Cref{00461cedb4}
  applied to \cref{50e6a4f108} says
  that the left square commutes
  and
  that the canonical
  morphism \(i^?\circ\DD_P\to\DD_{P\setminus\{m\}}\circ i^*\)
  is an equivalence,
  where \(i^?\) denotes the left adjoint of~\(i_+\).

  We then consider applying \cref{2953dbadf2}
  to conclude the proof.
  Since
  \(\DD_{P\setminus\{m\}}\) is an equivalence
  by our inductive hypothesis,
  it remains to check
  that \(\DD_P\)
  restricts to
  define the dashed arrow
  and that it is an equivalence.
  Equivalently, we need to show that the composite
  \begin{equation*}
    \Cat{Sp}
    \xrightarrow{j_*}\Fun(P,\Cat{Sp})
    \xrightarrow{\DD_P}\Fun(P^{\op},\Cat{Sp})
    \xrightarrow{\text{restriction}}\Fun(\{p\}^{\op},\Cat{Sp})
    \simeq\Cat{Sp}
  \end{equation*}
  is zero for \(p\neq m\) and an equivalence for \(p=m\).
  Note that since this functor is colimit-preserving,
  it is determined by its value at~\(\SS\),
  for which we write \(E_p\).
  \Cref{f6072744fd} says
  that \(E_p\)
  is computed as \(\projlim_{q\in P}\Map(p,q)\otimes(j_*\SS)(q)\).

  If \(p\nleq m\),
  the spectrum \(E_p\)
  is zero as \((j_*\SS)(q)=0\) holds for \(q\nleq m\).
  If \(p\leq m\),
  the spectrum \(E_p\) is equivalent to
  the cohomology \(\Gamma(P;\SS_{[p,m]})\).
  Hence \(E_p\) is zero for \(p<m\)
  by the assumption~\cref{i-vanishing} and \cref{sz}.
  Therefore, it remains to compute \(E_m\simeq\Gamma(P;\SS_{[m,m]})\).
  We pick a maximal chain \(p_0<\dotsb<p_r=m\) in \(P\).
  For \(i=1\), \dots,~\(n\),
  by using \(\Gamma(P;\SS_{[p_{i-1},p_{i}]})=0\),
  we have
  \begin{equation*}
    \phantom.
    \Gamma(P;\SS_{[p_i,p_i]})
    \simeq\fib\bigl(\Gamma(P;\SS_{[p_{i-1},p_i]})
    \to\Gamma(P;\SS_{[p_{i-1},p_{i-1}]})\bigr)
    \simeq\Sigma^{-1}\Gamma(P;\SS_{[p_{i-1},p_{i-1}]}).
  \end{equation*}
  Thus we have \(\Gamma(P;\SS_{[m,m]})=\Sigma^{-r}\SS\).
  Hence the dashed arrow in \cref{e-a7e63710} is identified with
  the functor \(\Sigma^{-r}\), which is an equivalence.
\end{proof}

\begin{proof}[Proof of
  \(\text{\cref{i-verdier}}\Rightarrow\text{\cref{i-vanishing}}\)
  of \cref{main}]
  We proceed by induction on~\(\#P\).
  If \(P=\emptyset\), the claim is obvious.
  We assume \(P\neq\emptyset\)
  and pick a maximal element~\(m\).
  Then \(P\setminus\{m\}\) is also Verdier
  by \cref{ee41f58df5} applied to \cref{50e6a4f108}.
  Hence by our inductive hypothesis,
  it suffices to show that
  \(\Gamma(P;\SS_{[p,m]})\) vanishes
  for any \(p<m\).

  We now form the diagram~\cref{e-a7e63710},
  but the dashed arrow already exists
  in this case since \(\DD_P\) is an equivalence.
  As we have observed in the above proof,
  the existence of the dashed arrow
  in particular means
  that \(\Gamma(P;\SS_{[p,m]})\) vanishes
  for \(p<m\), which is what we wanted to show
  by \cref{sz}.
\end{proof}

\subsection{The Gorenstein* condition from duality}\label{ss-up}

We finally complete the proof of \cref{main}.
The main ingredient is the following nontrivial observation:

\begin{proposition}\label{b644f9030c}
  If a finite poset~\(P\) is Verdier,
  \(P_{>p}\) is also Verdier for any \(p\in P\).
\end{proposition}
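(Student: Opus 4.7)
The plan is to argue by induction on $\#P$, freely using the equivalence $\text{\cref{i-verdier}}\Leftrightarrow\text{\cref{i-vanishing}}$ of \cref{main} that has already been established. The base case $P=\emptyset$ is vacuous. The essential preliminary step is that removing a maximal element preserves the Verdier property: for $m$ maximal in $P$, \cref{ee41f58df5} applied to \cref{50e6a4f108} for the sieve inclusion $P\setminus\{m\}\hookrightarrow P$ yields this, once one checks that $\projlim_P\circ i_*\simeq\projlim_{P\setminus\{m\}}$ (the extension-by-zero at the maximal cap $m$ imposes no nontrivial constraint on the cone). Given $p_0\in P$, I distinguish two cases. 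If $P$ admits a maximal element $q_0$ that does not strictly dominate $p_0$, then $q_0$ is necessarily incomparable with (or equal to) $p_0$, so $P_{>p_0}=(P\setminus\{q_0\})_{>p_0}$ and the inductive hypothesis on $P\setminus\{q_0\}$ concludes the case. Otherwise every maximal element of $P$ lies above $p_0$; choose such a $q_0$, which is then also maximal in $P_{>p_0}$, and the inductive hypothesis yields that $P_{>p_0}\setminus\{q_0\}=(P\setminus\{q_0\})_{>p_0}$ is Verdier.

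To upgrade this to $P_{>p_0}$ Verdier, the plan is to replay the strategy of $\text{\cref{i-vanishing}}\Rightarrow\text{\cref{i-verdier}}$ from \cref{ss-fin}, with $P_{>p_0}$ in place of $P$ and $q_0$ in place of $m$. Form the analog of \cref{e-a7e63710} using the sieve $P_{>p_0}\setminus\{q_0\}\hookrightarrow P_{>p_0}$ (the closed piece) and the cosieve $\{q_0\}\hookrightarrow P_{>p_0}$ (the open piece). The left square commutes by \cref{00461cedb4} applied to \cref{50e6a4f108}, its left vertical morphism is the equivalence $\DD_{P_{>p_0}\setminus\{q_0\}}$ by induction, and \cref{2953dbadf2} reduces the statement to producing the dashed arrow $\Cat{Sp}\to\Cat{Sp}$ and showing it is an equivalence. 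Via \cref{f6072744fd}, this amounts to the vanishings $\Gamma(P_{>p_0};\SS_{[p',q_0]}^{P_{>p_0}})=0$ for all $p'\in P_{>p_0}$ with $p'<q_0$, together with invertibility of $\Gamma(P_{>p_0};\SS_{[q_0,q_0]}^{P_{>p_0}})$.

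The main obstacle is the vanishing, which I would attack by pulling the question back to $P$ itself. A direct computation gives $\SS_{[p',q_0]}^{P_{>p_0}}\simeq j^*\SS_{[p',q_0]}^P$ for the cosieve $j\colon P_{>p_0}\hookrightarrow P$, and the adjunction $j^*\dashv j_*$ gives $\Gamma_{P_{>p_0}}\simeq\Gamma_P\circ j_*$. Combining these with the recollement fiber sequence $i_*i^!\to\id\to j_*j^*$ for the complementary sieve $i\colon P_{\leq p_0}\hookrightarrow P$, and with the input $\Gamma(P;\SS_{[p',q_0]}^P)=0$ supplied by condition~\cref{i-vanishing} on $P$, the vanishing reduces to controlling the residue $\Gamma(P_{\leq p_0};i^!\SS_{[p',q_0]}^P)$. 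Since $\SS_{[p',q_0]}^P|_{P_{\leq p_0}}=0$, this residue simplifies to a shifted limit on $P_{\leq p_0}$ of a pushforward term; iterating along a maximal chain $p_0<p_1<\dotsb<p_r=q_0$ as in the shift computation at the end of \cref{ss-fin} collapses the resulting cycle to a consequence of the hypothesis on $P$, and the same chain argument produces the invertibility at $q_0$ as a shift $\Sigma^{-r}\SS$.
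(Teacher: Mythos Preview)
Your reduction in case~1 is fine, and so is the observation that in case~2 the inductive hypothesis makes $P_{>p_0}\setminus\{q_0\}$ Verdier. The gap is in the last paragraph. First, a minor slip: $P_{\leq p_0}$ is \emph{not} the complement of the cosieve $P_{>p_0}$ in general; the complement is $\{p:p\not>p_0\}$, which also contains elements incomparable to~$p_0$. More seriously, even with the correct complement (or when $p_0$ is minimal, where the two coincide), your recollement computation is circular. With $F=\SS_{[p',q_0]}^P$ you get $i^!F\simeq\Sigma^{-1}i^*j_*j^*F$, and for $p\leq p_0$ one has $(i^*j_*j^*F)(p)=\projlim_{P_{>p_0}}j^*F=\Gamma_{P_{>p_0}}(\SS_{[p',q_0]}^{P_{>p_0}})$ again, since $\{r:r>p_0,\,r\geq p\}=P_{>p_0}$. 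In the minimal case the sieve is $\{p_0\}$ and the fibre sequence collapses to the tautology $\Gamma_{P_{>p_0}}(\SS_{[p',q_0]}^{P_{>p_0}})\simeq\Gamma_{P_{>p_0}}(\SS_{[p',q_0]}^{P_{>p_0}})$. I do not see how a chain argument breaks this loop: the shift computation at the end of \cref{ss-fin} used already-established vanishings to compute a single-point term, whereas here the vanishing is the goal.

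The paper's route avoids this by passing through the \emph{left} adjoint. After reducing to $p$ minimal (by climbing a maximal chain below $p$), it applies \cref{ee41f58df5} to the cosieve situation of \cref{b5898d2f13} rather than the sieve situation of \cref{50e6a4f108}: this directly exhibits $(\Fun(P_{>p},\Cat{Sp}),\Gamma_P\circ j_!)$ as Frobenius. The remaining task is to identify the trace $\Gamma_P\circ j_!$ with $\Sigma^{-1}\Gamma_{P_{>p}}$. Factoring $j$ as $P_{>p}\xhookrightarrow{k}P_{\geq p}\xhookrightarrow{l}P$ and using that $\cofib(k_!\to k_*)$ is supported at the minimal point~$p$ (so $l_!$ and $l_*$ agree on it), one reduces to showing $\Gamma_P\circ l_!$ vanishes on limit diagrams over $P_{\geq p}$; since those are generated under colimits by the $\SS_{[p,q]}$ and $l_!\SS_{[p,q]}=\SS_{[p,q]}$, this follows from condition~\cref{i-vanishing} on~$P$. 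The conceptual point you are missing is that $j_!$ (extension by zero from a cosieve) carries $\SS_{[p',q_0]}^{P_{>p_0}}$ to $\SS_{[p',q_0]}^{P}$, so the ambient vanishing transfers --- but only once you know $\Gamma_P\circ j_!$ is, up to shift, the correct trace on $P_{>p}$.
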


The proof requires the following trivial observations:

\begin{lemma}\label{6e50e2411f}
  Let \(P\) be a finite poset.
  For \(p\in P\),
  let \(\SS_{\leq p}\)
  denote the right Kan extension
  of the constant functor with value~\(\SS\)
  along \(P_{\leq p}\hookrightarrow P\).
  Then \(\Fun(P,\Cat{Sp})\) is generated
  by \(\SS_{\leq p}\) under colimits and shifts.
\end{lemma}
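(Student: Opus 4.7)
The plan is to proceed by induction on $\#P$, with $P=\emptyset$ trivial. Let $\cat{D}\subset\Fun(P,\Cat{Sp})$ denote the smallest full subcategory containing the $\SS_{\leq p}$ and closed under colimits and shifts. For the inductive step, pick a maximal element $m\in P$, set $P'=P\setminus\{m\}$, and let $i\colon P'\hookrightarrow P$ and $j\colon\{m\}\hookrightarrow P$ be the inclusions. By the sheaf recollement of \cref{a9ca3865c3}, every $F\in\Fun(P,\Cat{Sp})$ sits in a cofiber sequence $j_!j^*F\to F\to i_*i^*F$, so it suffices to show that both $i_*i^*F$ and $j_!j^*F$ lie in~$\cat{D}$.

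For the term $i_*i^*F$: the right Kan extension $i_*$ along the downward closed inclusion~$i$ is just pointwise extension by zero at~$m$, hence exact and colimit-preserving. A direct computation gives $i_*\SS^{P'}_{\leq p'}\simeq\SS^P_{\leq p'}$. Combined with the inductive hypothesis applied to~$P'$ (which identifies $\Fun(P',\Cat{Sp})$ as generated by the $\SS^{P'}_{\leq p'}$), this shows $i_*i^*F\in\cat{D}$.

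For the term $j_!j^*F$: it is the functor taking value $F(m)$ at~$m$ and~$0$ elsewhere, equivalently the copower $F(m)\otimes\SS_{\{m\}}$, where $\SS_{\{m\}}$ is the delta functor with value $\SS$ concentrated at~$m$; since $\cat{D}$ is closed under copowers with spectra, it suffices to prove $\SS_{\{m\}}\in\cat{D}$. The key input is the cofiber sequence
\[
\SS_{\{m\}}\longrightarrow\SS_{\leq m}\longrightarrow\chi_{P_{<m}},
\]
obtained by applying the same kind of recollement inside $\Fun(P_{\leq m},\Cat{Sp})$ (with $\{m\}$ open) to the constant functor $\SS$ and then pushing forward along $P_{\leq m}\hookrightarrow P$; here $\chi_{P_{<m}}$ denotes the extension by zero of the constant functor $\SS$ from $P_{<m}$ to~$P$. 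Since $\SS_{\leq m}$ is a generator of~$\cat{D}$, the main obstacle is to place $\chi_{P_{<m}}$ in~$\cat{D}$. This is handled by the inductive hypothesis applied to the strictly smaller poset~$P_{<m}$: it identifies $\Fun(P_{<m},\Cat{Sp})$ as generated by the $\SS^{P_{<m}}_{\leq p''}$, so in particular the constant functor~$\SS$ lies in that subcategory, and the exact pushforward $(i_{P_{<m}})_*$ (which sends $\SS^{P_{<m}}_{\leq p''}$ to $\SS^P_{\leq p''}$) transports it into~$\cat{D}$, completing the proof.
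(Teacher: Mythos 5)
Your proof is correct, but it takes a genuinely different decomposition from the paper's. The paper's proof uses the single fiber sequence
\[
\fib\bigl(F\to F(m)\otimes\SS_{\leq m}\bigr)\longrightarrow F\longrightarrow F(m)\otimes\SS_{\leq m},
\]
noting that the fiber vanishes at~\(m\) (hence lies in the image of~\(i_*\) and is handled by the inductive hypothesis applied to~\(P\setminus\{m\}\)) and that the third term is a copower of a generator; this is a one-shot argument with a single inductive call. You instead use the recollement fiber sequence \(j_!j^*F\to F\to i_*i^*F\). The closed term \(i_*i^*F\) is handled by the same argument as the paper's fiber term, but the open term \(j_!j^*F\simeq F(m)\otimes\SS_{\{m\}}\) is not a priori in~\(\cat{D}\), so you need the auxiliary cofiber sequence \(\SS_{\{m\}}\to\SS_{\leq m}\to\chi_{P_{<m}}\) and a second application of the inductive hypothesis (to~\(P_{<m}\)) to place \(\chi_{P_{<m}}\) in~\(\cat{D}\). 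Both routes are correct and well-founded (both \(P\setminus\{m\}\) and \(P_{<m}\) are strictly smaller), but the paper's choice of fiber sequence avoids the need to exhibit \(\SS_{\{m\}}\) as a cell explicitly and so shortcuts your two-stage argument to one. The trade-off is that your decomposition is the more natural one to reach for if one is thinking in terms of the recollement filtration of \(\Fun(P,\Cat{Sp})\), and it makes the "stratification by cells \(\SS_{\{m\}}\)" picture more transparent.
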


\begin{proof}
  We proceed by induction on \(\#P\).
  There is nothing to prove if \(P=\emptyset\).
  Assume otherwise and pick a maximal element \(m\in P\).
  Let \(\cat{C}\subset\Fun(P,\Cat{Sp})\)
  be the full subcategory generated
  by \(\SS_{\leq p}\) under colimits and shifts.
  The inductive hypothesis implies that
  \(F\in\Fun(P,\Cat{Sp})\) is in \(\cat{C}\)
  if \(F(m)\) is zero.
  Hence any \(F\in\Fun(P,\Cat{Sp})\)
  the fiber of \(F\to F(m)\otimes\SS_{\leq m}\)
  is in \(\cat{C}\).
  Since \(F(m)\otimes\SS_{\leq m}\)
  is also in \(\cat{C}\) by assumption
  we have \(F\in\cat{C}\).
\end{proof}

\begin{lemma}\label{b334c9cd53}
  Let \(P\) be a (not necessarily finite) poset
  Then \(E_{[\bot,p]}
  \colon P_{\bot}\to\Cat{Sp}\) is a limit diagram
  for any \(p\in P\)
  and any \(E\in\Cat{Sp}\).
\end{lemma}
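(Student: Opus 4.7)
The plan is to unwind the two-step Kan extension in \cref{96f462d60f}, reduce to a right Kan extension of a constant functor from $(P_{\leq p})_{\bot}$ to $P_{\bot}$, and then verify the limit property by a short pointwise calculation. I do not foresee any serious obstacle; the main subtlety is just to notice that the left Kan extension step collapses and to carefully apply the pointwise formula on each side.

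First I would observe that in $P_{\bot}$, the interval $[\bot,p]$ coincides with the downward-closed subposet $(P_{\bot})_{\leq p}=(P_{\leq p})_{\bot}$, since $\bot$ is the absolute minimum. Hence the left Kan extension step in \cref{96f462d60f} is the identity, and $E_{[\bot,p]}$ is simply the right Kan extension along $i\colon(P_{\leq p})_{\bot}\hookrightarrow P_{\bot}$ of the constant functor with value~$E$.

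Then I would compute $E_{[\bot,p]}$ pointwise. Its value at $\bot$ equals $E$ tautologically, since $\bot$ lies in the source of~$i$. For $q\in P$, the comma poset $\{r\in(P_{\leq p})_{\bot}:q\leq r\}$ is either $[q,p]$, when $q\leq p$, or empty, otherwise (the case $q\leq\bot$ forces $q=\bot\notin P$, and $q\leq r\leq p$ forces $q\leq p$). Consequently $E_{[\bot,p]}|_P$ is itself the right Kan extension along $P_{\leq p}\hookrightarrow P$ of the constant functor with value~$E$.

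Finally, by transitivity of right Kan extensions applied to $P_{\leq p}\hookrightarrow P\to *$, the limit $\projlim_{P}E_{[\bot,p]}|_P$ is identified with $\projlim_{P_{\leq p}}E$, which equals $E$ because $P_{\leq p}$ has $p$ as its terminal object. This matches $E_{[\bot,p]}(\bot)=E$, so the cone at $\bot$ is a limit cone, as required.
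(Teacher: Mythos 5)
Your proof is correct and follows essentially the same route as the paper's: both observe that $E_{[\bot,p]}$ is a right Kan extension of the constant functor $E$ on $(P_{\leq p})_{\bot}$, reduce the limit over $P$ to a limit over $P_{\leq p}$, and conclude using contractibility of $P_{\leq p}$. The paper compresses this into ``replace $P$ with $P_{\leq p}$ and observe $P$ is now weakly contractible'' (also offering \cref{afc4ccfff7} as an alternative); your last step is the same, just phrased as ``$P_{\leq p}$ has a terminal object,'' which is what makes it weakly contractible and hence makes the limit of the constant diagram equal to $E$.
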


\begin{proof}
  We can assume that \(p\) is the greatest element
  by replacing~\(P\) with \(P_{\leq p}\).
  Then the result follows from \cref{afc4ccfff7}
  or, more directly,
  the observation that now \(P\) is weakly contractible.
\end{proof}

\begin{proof}[Proof of \cref{b644f9030c}]
  By induction, it suffices to consider the case where
  \(p\) is minimal.

  Let \(j\) denote the inclusion \(P_{>p}\hookrightarrow P\)
  and \(j_!\colon\Fun(P_{>p},\Cat{Sp})
  \to\Fun(P,\Cat{Sp})\)
  the left Kan extension functor.
  \cref{ee41f58df5} applied to \cref{b5898d2f13}
  says that the pair \((\Fun(P_{>p},\Cat{Sp}),
  \Gamma_P\circ j_!)\) is a commutative Frobenius algebra.
  Hence it suffices to construct an equivalence
  \(\Gamma_P\circ j_!\simeq
  \Sigma^{-1}\circ\Gamma_{P_{>p}}\) in \(\Fun(\Fun(P_{>p},\Cat{Sp}),\Cat{Sp})\).

  We write \(j\) as the composite
  \(P_{>p}\xhookrightarrow{k}P_{\geq p}\xhookrightarrow{l}P\).
  Then we have a morphism
  \(j_!\simeq l_!\circ k_!\to l_!\circ k_*\),
  where \((\X)_!\) and \((\X)_*\) denote
  the left and right Kan extension functors.
  By applying \(\Gamma_P\circ\X\),
  we obtain \(\Gamma_P\circ j_!\to\Gamma_P\circ l_!\circ k_*\).
  Since its cofiber can be computed as
  \begin{equation*}
    \phantom,
    \Gamma_P\circ l_!\circ\cofib(k_!\to k_*)
    \simeq
    \Gamma_P\circ l_*\circ\cofib(k_!\to k_*)
    \simeq
    \Gamma_{P_{\geq p}}\circ\cofib(k_!\to k_*)
    \simeq
    \Gamma_{P_{>p}}
    ,
  \end{equation*}
  where we use the minimality of~\(p\),
  we are reduced to showing
  that \(\Gamma_P\circ l_!\circ k_*\)
  is zero.

  Let \(\cat{C}\) denote the full subcategory
  of \(\Fun(P_{\geq p},\Cat{Sp})\)
  spanned by the limit diagrams.
  We need to show that
  \(\Gamma_P\circ l_!\) is zero on~\(\cat{C}\).
  We now observe that \(\cat{C}\)
  is generated under colimits and shifts by
  \(\SS_{[p,q]}\) for \(q\in P_{>p}\):
  First, they are indeed limit diagrams by \cref{b334c9cd53}.
  Then it follows from \cref{6e50e2411f}
  that \(\Fun(P_{>p},\Cat{Sp})\) is generated by
  their restrictions.
  Therefore, we need to show that \((\Gamma_P\circ l_!)(\SS_{[p,q]})
  \simeq\Gamma_P(\SS_{[p,q]})\) is zero, but
  this follows from~\cref{i-vanishing}
  of \cref{main};
  note that we have already proven
  \(\text{\cref{i-verdier}}\Rightarrow\text{\cref{i-vanishing}}\)
  in \cref{ss-fin}.
\end{proof}

\begin{proof}[Proof of
  \(\text{\cref{i-verdier}}\Rightarrow\text{\cref{i-gorenstein}}\)
  of \cref{main}]
  Let \(P\) be a Verdier finite poset.
  According to \cref{94f6eb4680},
  it suffices to show that \((p,q)\) has
  the (integral)
  \emph{cohomology} of a sphere for \(p<q\) in \(P_{\bot}\).
  Since we know that \(P_{\leq q}\) is Verdier from
  \(\text{\cref{i-verdier}}\Leftrightarrow\text{\cref{i-vanishing}}\),
  we can assume that \(q\) is the greatest element of~\(P\).
  We can also assume that \(p=\bot\) by \cref{b644f9030c}.
  Hence it remains to compute the cohomology of~\(P_{<q}\),
  which is the fiber of
  \(\Gamma(P;\ZZ)\to\Gamma(P;\ZZ_{[q,q]})\).
  If \(P\) is a singleton, it is obviously zero.
  Otherwise, \cref{b334c9cd53}
  says \(\Gamma(P;\ZZ)\simeq\ZZ\)
  and the last part of the proof of
  \(\text{\cref{i-vanishing}}\Rightarrow\text{\cref{i-verdier}}\)
  says that \(\ZZ_{[q,q]}\)
  is some positive desuspension of~\(\ZZ\).
  Therefore, \(P_{<q}\) has the cohomology of a sphere.
\end{proof}

We then obtain \cref{0b4fe90159} as a bonus:

\begin{proof}[Proof of \cref{0b4fe90159}]
  By the standard (stable) Yoneda argument,
  we can assume \(\cat{C}=\Cat{Sp}\).
  Since \(P^{\op}\) is also Gorenstein*,
  it suffices to show that
  any limiting diagram \(P_{\bot,\top}\to\Cat{Sp}\)
  is colimiting.
  Then as in the proof of \cref{b644f9030c},
  it suffices to show
  that \(\SS_{[\bot,p]}\in\Fun(P_{\bot,\top},\Cat{Sp})\) is colimiting
  for \(p\in P_{\top}\).
  Since this is trivial for \(p=\top\)
  as \(P_{\bot}\) is weakly contractible, we assume otherwise.
  By the self-duality of
  the \(\infty\)-category of finite spectra,
  we are reduced
  to showing that \(\SS_{[p,\bot]}\in\Fun((P_{\bot,\top})^{\op},\Cat{Sp})\)
  is limiting. As \(p\neq\top\), this
  is equivalent to
  the vanishing of the cohomology of
  \(\SS_{[p,\bot]}\in\Fun((P_{\bot})^{\op},\Cat{Sp})\),
  which follows from \cref{main}.
\end{proof}

\section{Variants}\label{s-v}

In this section, we prove
\cref{lf-arbitrary}
and show that our duality can be regarded
as a topological sheaf-cosheaf duality.

\subsection{For locally finite posets}\label{ss-lf}

The equivalence \cref{e-26eeba67}
exists for the face poset
of a locally finite regular CW complex.
We extend our duality to cover that case.

\begin{definition}\label{8771652871}
  We say that a poset~\(P\) is \emph{locally finite} if
  \(P_{\geq p}\) is finite for every \(p\in P\).
\end{definition}

This terminology is justified by considering
the Alexandroff topology of~\(P\) (see \cref{e407564b20}).

\begin{definition}\label{3908ebac89}
  For a poset~\(P\),
  we write \(\Pow^{\fin}(P)\) for the poset
  of finite subsets.
  We write \(\Down(P)\) for the poset of
  sieves, i.e., downward-closed full subposets.
  We consider the functor
  \begin{equation}
    \label{e-8591975d}
    \Pow^{\fin}(P)^{\triangleright}\longrightarrow
    \Down(P)
  \end{equation}
  given by \(S\mapsto
  \bigcup_{p\in S}P_{\leq s}\)
  and \({\infty}\mapsto P\).
\end{definition}

We prove that for a nice poset~\(P\),
the presheaf \(\infty\)-categories of it and its opposite
can be recovered
from those of full subposets
of the form \(\bigcup_{p\in S}P_{\leq s}\)
for finite~\(S\)
by taking colimits in~\(\Cat{Pr}\).

\begin{proposition}\label{8d61256fe2}
  For any poset~\(P\)
  and any presentable \(\infty\)-category~\(\cat{C})\),
  the diagram given by the composite
  \begin{equation*}
    \Pow^{\fin}(P)^{\triangleright}
    \xrightarrow{\text{\cref{e-8591975d}}}
    \Down(P)
    \xrightarrow{(\PShv_{\cat{C}}(\X),\X_!)}
    \Cat{Pr}
  \end{equation*}
  is colimiting.
\end{proposition}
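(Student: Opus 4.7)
The plan is to reduce the statement to the assertion that $P$ is the $\Cat{Cat}_\infty$-colimit of the filtered diagram $S \mapsto Q_S \coloneqq \bigcup_{s \in S} P_{\leq s}$ indexed by $\Pow^{\fin}(P)$, and then transport this identification through the free cocompletion functor $\PShv \colon \Cat{Cat}_\infty \to \Cat{Pr}$ and through the tensor product with $\cat{C}$.

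First, I would verify the elementary points that $\Pow^{\fin}(P)$ is directed (any two finite subsets are contained in their union), that each $Q_S$ is a sieve in $P$, and that $P = \bigcup_S Q_S$, since every $p \in P$ lies in $Q_{\{p\}} = P_{\leq p}$. To establish $P \simeq \operatorname{colim}_S Q_S$ in $\Cat{Cat}_\infty$, I would invoke the universal property of colimits: for any $\infty$-category $\cat{D}$, the comparison $\Fun(P, \cat{D}) \to \lim_S \Fun(Q_S, \cat{D})$ is an equivalence because $P$ is a poset and any compatible family of functors on the $Q_S$ assembles uniquely to a functor on their union.

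Next, I would apply the presheaf functor $\PShv \colon \Cat{Cat}_\infty \to \Cat{Pr}$ that sends $K$ to $\Fun(K^{\op}, \Cat{S})$ with transition maps given by left Kan extension. Being the universal cocompletion, it preserves colimits, yielding $\PShv(P) \simeq \operatorname{colim}_S \PShv(Q_S)$ in $\Cat{Pr}$. Since $- \otimes \cat{C}$ is a left adjoint in $\Cat{Pr}$ and $\PShv_{\cat{C}}(K) \simeq \PShv(K) \otimes \cat{C}$ naturally in $K$, tensoring delivers $\PShv_{\cat{C}}(P) \simeq \operatorname{colim}_S \PShv_{\cat{C}}(Q_S)$. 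The main point needing care is that this tensored-and-transported functoriality matches the left Kan extension functoriality of the statement; this reduces to the compatibility of left Kan extension with the presentable tensor product, which follows from its pointwise (copower) formula.
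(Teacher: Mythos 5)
Your argument is essentially the paper's own, and it is correct. The paper observes that the diagram lands in posets, that $P$ is the filtered colimit of the sieves $Q_S$ in $\Cat{Poset}$, and that the inclusion $\Cat{Poset}\hookrightarrow\Cat{Cat}_{\infty}$ preserves filtered colimits, after which the result follows from the fact that $K\mapsto\PShv_{\cat{C}}(K)$ (with left-Kan-extension functoriality) preserves colimits. You run the same reduction, verifying $P\simeq\operatorname{colim}_S Q_S$ in $\Cat{Cat}_\infty$ directly by the mapping-out characterization and then applying $\PShv\otimes\cat{C}$. The one place where your write-up is slightly loose is the sentence ``any compatible family of functors on the $Q_S$ assembles uniquely to a functor on their union \emph{because $P$ is a poset}'': this is not a feature of posets per se but of the fact that the diagram is \emph{filtered} and the nerve functor preserves filtered colimits (any finite chain $p_0\leq\dotsb\leq p_n$ already lies in $Q_{\{p_0,\dotsc,p_n\}}$), so the colimit of nerves is computed levelwise and equals $N(P)$; with that caveat the argument is complete and matches the paper's proof in substance.
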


\begin{proof}
  First,
  note that
  the diagram
  \(\Pow^{\fin}(P)^{\triangleright}\to\Down(P)\to\Cat{Poset}\)
  is colimiting.
  Since \(\Down^{\fin}(P)\) is filtered,
  its composite with \(\Cat{Poset}\hookrightarrow\Cat{Cat}_{\infty}\)
  is also colimiting, from which the result follows.
\end{proof}

\begin{proposition}\label{13a6000b87}
  Suppose that
  \(P\) is a locally finite poset
  and \(\cat{C}\) is a compactly generated pointed \(\infty\)-category.
  Then the diagram given by the composite
  \begin{equation*}
    \Pow^{\fin}(P)^{\triangleright}
    \xrightarrow{\text{\cref{e-8591975d}}}
    \Down(P)
    \xrightarrow{(\Fun(\X,\cat{C}),\X_*)}
    \Cat{Pr}
  \end{equation*}
  is colimiting.
  Here the second arrow is well defined by \cref{0ec8cf3aa6} below.
\end{proposition}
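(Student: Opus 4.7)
The plan is to exploit compact generation and reduce the statement to a transparent identification of compact objects. Using the equivalence between compactly generated presentable $\infty$-categories (with compact-preserving morphisms) and small idempotent-complete $\infty$-categories (see \autocite[Section~5.5.7]{LurieHTT}), I will compute the colimit in $\Cat{Pr}$ by passing to subcategories of compact objects, forming the colimit in $\Cat{Cat}_\infty$, idempotent-completing, and finally applying $\Ind$.

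First I would identify the compact generators: for any small $\infty$-category~$Q$, the $\infty$-category $\Fun(Q, \cat{C})$ is compactly generated by functors $F_{q,g}^{Q}\colon Q \to \cat{C}$ taking the value~$g$ on $Q_{\geq q}$ and~$0$ elsewhere, where $q$ ranges over~$Q$ and~$g$ over compact generators of~$\cat{C}$; these are the pointed copowers of the covariant representable at~$q$ with~$g$. Since $\iota_{S*}$ is pointwise extension by zero, it preserves colimits, and it sends $F_{q,g}^{Q_S}$ to a functor on~$P$ supported on $(Q_S)_{\geq q} = Q_S \cap P_{\geq q}$. The local-finiteness hypothesis forces $P_{\geq q}$, hence this support, to be finite, so the image is compact in $\Fun(P, \cat{C})$; thus each $\iota_{S*}$ preserves compact objects.

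Next I would verify the identification on compact subcategories: the claim is that $\Fun(P, \cat{C})^{c} \simeq \colim_{S} \Fun(Q_S, \cat{C})^{c}$ in $\Cat{Cat}_\infty$ (the colimit being automatically idempotent-complete as a filtered colimit of such). Each restricted transition is fully faithful because $\iota_S^* \circ \iota_{S*} \simeq \id$, and every compact generator $F_{p,g}^{P}$ lies in the image of $\iota_{S*}$ for $S = P_{\geq p}$: there one has $P_{\geq p} \subset Q_{S}$, hence $(Q_S)_{\geq p} = P_{\geq p}$, so $F_{p,g}^{P} = \iota_{S*}(F_{p,g}^{Q_S})$. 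Closing the union of images under finite colimits and retracts then recovers all of $\Fun(P, \cat{C})^{c}$.

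The hard part will be verifying compact-preservation of the transitions $\iota_{S*}$, as this is precisely where the local-finiteness hypothesis is used essentially: without it the extension by zero of a compact object can have infinite support and fail to remain compact, as shown by \cref{4c27a68603}. Once this is in hand, applying $\Ind$ promotes the identification of compact subcategories to the desired colimit identification in $\Cat{Pr}$.
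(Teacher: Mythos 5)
Your proposal is correct and takes essentially the same route as the paper: pass to compact objects via the equivalence $\Cat{Pr}_\omega \simeq \Ind(\Cat{Cat}_\infty^{\mathrm{rex,idem}})$, check that the sieve right Kan extensions $\iota_{S*}$ preserve compacts using local finiteness (this is the paper's \cref{fa97259e33}), and identify the filtered colimit of compact subcategories with $\Fun(P,\cat{C})^\omega$. The only organizational difference is that you verify the last identification on the compact generators $F^P_{p,g}$ and then close under finite colimits and retracts, whereas the paper proves directly (\cref{8eca80f4cb}, using \cref{b8ebfe9ce2}) that an arbitrary compact object of $\Fun(P,\cat{C})$ is already a right Kan extension from some $\bigcup_{s\in S}P_{\leq s}$; these are equivalent given that the transitions are fully faithful and the index poset is filtered.
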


The proof requires several lemmas:

\begin{lemma}\label{cbb999bf61}
  For a poset~\(P\),
  let \(\Down^{\fin}(P)\) be the image of
  \(\Pow^{\fin}(P)\) under \cref{e-8591975d}.
  Then \(\Pow^{\fin}(P)\to\Down^{\fin}(P)\) is cofinal.
\end{lemma}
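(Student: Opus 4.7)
The plan is to apply Joyal's cofinality criterion (\autocite[Theorem~4.1.3.1]{LurieHTT}): the functor \(f\colon\Pow^{\fin}(P)\to\Down^{\fin}(P)\) given by \(S\mapsto D_S\coloneqq\bigcup_{p\in S}P_{\leq p}\) is cofinal if and only if for every \(D\in\Down^{\fin}(P)\) the slice \(\Pow^{\fin}(P)\times_{\Down^{\fin}(P)}\Down^{\fin}(P)_{D/}\) is weakly contractible. Since both are posets and \(f\) is monotone, this slice can be identified with the subposet
\[
  \cat{S}_D\coloneqq\{S\in\Pow^{\fin}(P):D\subseteq D_S\}\subseteq\Pow^{\fin}(P).
\]

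First I would observe that \(\cat{S}_D\) is nonempty. Indeed, by the definition of \(\Down^{\fin}(P)\) as the image of \cref{e-8591975d}, there exists some finite \(T\subseteq P\) with \(D=D_T\), and then \(T\in\cat{S}_D\).

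Next I would check that \(\cat{S}_D\) is closed under finite (nonempty) unions in \(\Pow^{\fin}(P)\): if \(D\subseteq D_{S_1}\) and \(D\subseteq D_{S_2}\), then \(D\subseteq D_{S_1}\cup D_{S_2}=D_{S_1\cup S_2}\), and \(S_1\cup S_2\) is still finite. Hence \(\cat{S}_D\) is a nonempty directed poset, in particular filtered, so its nerve is weakly contractible (e.g., by \autocite[Proposition~5.3.1.18]{LurieHTT}). Applying the cofinality criterion finishes the proof.

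The argument is essentially formal, so there is no real obstacle; the one point to verify carefully is the identification of the comma category with the strict subposet \(\cat{S}_D\), which is standard for monotone maps of posets viewed as \(\infty\)-categories.
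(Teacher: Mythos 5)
Your proof is correct and matches the paper's argument: the paper likewise invokes Joyal's form of Quillen's Theorem~A and concludes by noting the relevant comma poset is nonempty with binary joins (hence weakly contractible), which is exactly your nonemptiness-plus-closure-under-unions step.
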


\begin{proof}
  This follows from Joyal's version of Quillen's theorem~A
  and the fact that a nonempty poset having binary joins
  is weakly contractible.
\end{proof}

\begin{lemma}\label{0ec8cf3aa6}
  Let \(i\colon K_0\hookrightarrow K\) be a sieve inclusion
  of \(\infty\)-categories
  and \(\cat{C}\) a presentable \(\infty\)-category.
  Then the right Kan extension functor
  \(i_*\colon\Fun(K_0,\cat{C})\hookrightarrow\Fun(K,\cat{C})\) preserves
  weakly contractible colimits.
  In particular,
  \(i_*\) preserves colimits
  if \(\cat{C}\) is pointed.
\end{lemma}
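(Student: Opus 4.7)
The plan is to reduce the statement to a pointwise computation where the sieve hypothesis forces a very explicit description of $i_*$, and then invoke a standard fact about constant-diagram colimits.

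First I would unpack the pointwise formula for right Kan extension: for each $k \in K$, the value $(i_* F)(k)$ is the limit of $F|_{K_0}$ indexed by the comma $\infty$-category $K_0 \times_K K_{k/}$, whose objects are pairs $(k_0, \phi\colon k \to k_0)$ with $k_0 \in K_0$. The key observation is that the sieve hypothesis --- closure of $K_0$ under precomposition --- forces the existence of any such $\phi$ to imply $k \in K_0$. So for $k \notin K_0$ the comma category is strictly empty and $(i_* F)(k)$ is the terminal object $1_{\cat{C}}$; for $k \in K_0$ the pair $(k, \id_k)$ is initial in the comma category, giving $(i_* F)(k) \simeq F(k)$.

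Since colimits in $\Fun(K, \cat{C})$ are computed pointwise, preservation of a colimit by $i_*$ reduces to a pointwise check. For $k \in K_0$ there is nothing to do, as $i_*$ is pointwise the identity. For $k \notin K_0$, we need the colimit of the constant diagram $\Delta_{1_{\cat{C}}}\colon I \to \cat{C}$ to equal $1_{\cat{C}}$; this follows from the standard identification of that colimit with the copower $|I| \otimes 1_{\cat{C}}$, together with $|I| \simeq \ast$ and $\ast \otimes X \simeq X$. For the ``In particular'' clause, when $\cat{C}$ is pointed the terminal object coincides with the initial object $0$, and the constant diagram at an initial object has colimit that initial object for any indexing $\infty$-category --- so the weak-contractibility hypothesis is unnecessary. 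The argument is essentially formal once the pointwise description of $i_*$ is secured; the only subtlety I would be careful about is confirming that the comma $\infty$-category is genuinely empty --- not merely weakly equivalent to empty --- for $k \notin K_0$, but the sieve condition supplies this on the nose.
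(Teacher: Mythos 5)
Your proposal is correct and follows essentially the same route as the paper: both reduce to the pointwise computation using the sieve condition to split into the cases $k \in K_0$ (where $i_*$ acts as the identity) and $k \notin K_0$ (where $(i_*F)(k)$ is terminal), and both then appeal to the fact that a constant diagram at the terminal object over a weakly contractible index is colimiting. The only difference is that you spell out the pointed case directly --- by noting that the colimit of a constant diagram at the zero object is the zero object over \emph{any} indexing $\infty$-category, not just a weakly contractible one --- whereas the paper leaves the ``in particular'' unproved; your argument for it is valid, as is the alternative of combining preservation of weakly contractible colimits with preservation of the initial object.
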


\begin{proof}
  Let \(F\colon J^{\triangleright}\to\Fun(K_0,\cat{C})\)
  be a colimit diagram where \(J\) is weakly contractible.
  We need to show that \(i_*(F(\X))(k)
  \colon J^{\triangleright}\to\cat{C}\)
  is colimiting for any \(k\in K\).
  If \(k\in K_0\), the diagram is equivalent to \((F(\X))(k)\),
  which is colimiting since so is~\(F\).
  If \(k\notin K_0\), 
  the diagram is equivalent
  to the constant diagram with value~\(*\),
  which is colimiting since \(J\) is weakly contractible.
\end{proof}

\begin{lemma}\label{b8ebfe9ce2}
  Let \(P\) be a poset
  and \(\cat{C}\) a compactly generated \(\infty\)-category.
  Then any compact object of \(\Fun(P,\cat{C})\)
  is a left Kan extension of
  its restriction to some finite full subposet.
  If \(P\) is finite,
  the full subcategory of compact objects
  is the essential image of
  the inclusion
  \(\Fun(P,\cat{C}^{\omega})\hookrightarrow\Fun(P,\cat{C})\).
\end{lemma}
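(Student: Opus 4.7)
The plan is to identify explicit compact generators of $\Fun(P,\cat{C})$ and then exploit closure properties of left Kan extension. For each $p\in P$, the restriction functor $p^*\colon\Fun(P,\cat{C})\to\cat{C}$ preserves all colimits (computed pointwise), so its left adjoint $p_!$ preserves compact objects. Hence $\{p_!(c):p\in P,\;c\in\cat{C}^{\omega}\}$ is a family of compact objects of $\Fun(P,\cat{C})$; since $\Map(p_!(c),F)\simeq\Map(c,F(p))$, this family also jointly detects equivalences and therefore generates $\Fun(P,\cat{C})$ under colimits. By the standard theory of compactly generated $\infty$-categories, any compact $F$ is a retract of a finite colimit of such generators indexed by some finite set $\{p_1,\dots,p_n\}\subseteq P$.

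Let $Q=\{p_1,\dots,p_n\}$ with inclusion $i\colon Q\hookrightarrow P$; each of the generators involved lies in the essential image of $i_!$ by transitivity of left Kan extension. Since $i$ is fully faithful, so is $i_!$, and the essential image of $i_!$ is exactly the locus where the counit $i_!i^*\to\id$ is an equivalence. Both $i_!$ and $i^*$ are cocontinuous (the latter since colimits in $\Fun(P,\cat{C})$ are pointwise), so this locus is closed under colimits; and because retracts of equivalences are equivalences, it is closed under retracts as well. Consequently $F$ lies in the essential image of $i_!$, i.e.\ $F\simeq i_!(F|_Q)$, which proves the first assertion.

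For the second assertion, assume $P$ is finite. To see that compactness implies pointwise compactness, I would argue that the right adjoint $p_*$ of $p^*$ preserves filtered colimits: the right Kan extension formula gives $p_*(c)(q)=c$ for $q\le p$ and $p_*(c)(q)=\ast$ (terminal object) otherwise, both manifestly commuting with filtered colimits in $c$. Hence $p^*$ preserves compact objects and any compact $F\in\Fun(P,\cat{C})$ has each $F(p)$ compact. Conversely, if $F$ takes values in $\cat{C}^{\omega}$, the end formula presents $\Map_{\Fun(P,\cat{C})}(F,G)$ as a finite limit over the twisted arrow category of $P$ of mapping spaces $\Map_{\cat{C}}(F(p),G(q))$; each commutes with filtered colimits in $G$ by compactness of $F(p)$, and finite limits commute with filtered colimits in $\cat{S}$, so $\Map(F,\X)$ preserves filtered colimits and $F$ is compact.

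The one step that requires real care is the closure of the essential image of $i_!$ under retracts, since this is what upgrades ``retract of a finite colimit of generators'' to a genuine left Kan extension. Everything else is formal once the $p_!(c)$'s are recognised as jointly-detecting compact generators and $p_*$ is seen to preserve filtered colimits when $P$ is finite.
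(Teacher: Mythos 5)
Your proposal is correct. Since the paper's own ``proof'' is just a two-line citation of external results (Corollary~2.11 and Proposition~2.8 of the \texttt{ttg-fun} reference), you have effectively supplied a self-contained argument where the paper defers. The skeleton is sound: the objects \(p_!(c)\) with \(c\in\cat{C}^{\omega}\) are compact because evaluation \(p^*\) preserves all (pointwise) colimits, and they jointly detect equivalences because \(\Map(p_!(c),F)\simeq\Map(c,F(p))\) and \(\cat{C}^{\omega}\) detects equivalences in \(\cat{C}\). The key structural move — showing that the locus where the counit \(i_!i^*\to\id\) is invertible is closed under colimits (both \(i_!\) and \(i^*\) are cocontinuous) and under retracts — is exactly what converts ``retract of an object built from finitely many generators'' into ``lies in the essential image of \(i_!\)'', and is correct. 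One small imprecision: ``retract of a finite colimit of generators'' should read ``retract of an iterated finite colimit of generators'' (the closure of the generating set under finite colimits is built in stages), but this does not affect the point that only finitely many \(p\)'s occur, which is all you use. For the second assertion, the computation \(p_*(c)(q)=c\) for \(q\leq p\) and \(=\ast\) otherwise is correct and visibly commutes with filtered colimits (this actually needs no finiteness of \(P\)), and the end/twisted-arrow presentation of mapping spaces together with finite-limits-commute-with-filtered-colimits in \(\cat{S}\) gives the converse when \(P\) is finite. The argument is complete and, modulo the minor wording issue above, correct.
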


\begin{proof}
  These follow from
  \autocite[Corollary~2.11
  and Proposition~2.8]{ttg-fun}\footnote{
    Beware that the assumption
    \(\bigcup_{j\in J}K_j=K\) is missing
    in the statement
    of \autocite[Corollary~2.11]{ttg-fun}
  }, respectively.
\end{proof}

\begin{lemma}\label{fa97259e33}
  Let \(P\) be a locally finite poset
  and \(\cat{C}\) a compactly generated pointed \(\infty\)-category.
  Then for any \(P_0\in\Down(P)\),
  the right Kan extension functor
  \(i_*\colon\Fun(P_0,\cat{C})\hookrightarrow\Fun(P,\cat{C})\)
  preserves compact objects.
\end{lemma}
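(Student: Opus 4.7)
The plan is to express a compact $F \in \Fun(P_0,\cat{C})$ via \cref{b8ebfe9ce2} and then exploit local finiteness of $P$ to re-present $i_* F$ as a left Kan extension from a finite full subposet of $P$ with objectwise compact values.

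Given a compact $F$, applying \cref{b8ebfe9ce2}\,(a) inside $P_0$ yields a finite full subposet $k_0\colon Q\hookrightarrow P_0$ with $F\simeq(k_0)_!(F|_Q)$. Since $k_0^*$ preserves filtered colimits pointwise, its left adjoint preserves compact objects, so $F|_Q$ is compact in \(\Fun(Q,\cat{C})\); as $Q$ is finite, \cref{b8ebfe9ce2}\,(b) gives that $F|_Q$ is objectwise compact. Now set $Q_0=\bigcup_{q\in Q}P_{\geq q}$; local finiteness of $P$ makes $Q_0$ a finite full subposet of $P$ containing $Q$. Write $m\colon Q_0\hookrightarrow P$ and $H=(i_*F)|_{Q_0}$.

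The main step is to verify that the counit $m_!H\to i_*F$ is a pointwise equivalence. For $p\in Q_0$ this is immediate, as $p$ is terminal in $(Q_0)_{\leq p}$, so both sides equal $(i_*F)(p)$. For $p\notin Q_0$, both sides vanish. The colimit defining $(m_!H)(p)$ is over the empty set: any $q\in Q_0$ with $q\leq p$ would satisfy $q\in P_{\geq q'}$ for some $q'\in Q$, whence $q'\leq q\leq p$ and $p\in P_{\geq q'}\subseteq Q_0$, a contradiction. On the other side, $(i_*F)(p)=0$ either because $p\notin P_0$ (extension by zero, as in the proof of \cref{0ec8cf3aa6}) or because $p\in P_0$ and $(k_0)_!(F|_Q)(p)$ is itself an empty colimit by the same non-existence argument.

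Finally, $H$ is objectwise compact: each $H(q)$ is either $0$ or a finite colimit of objects of $\cat{C}^\omega$, namely the value of $(k_0)_!(F|_Q)$ over the finite poset $Q_{\leq q}$. Since $Q_0$ is finite, \cref{b8ebfe9ce2}\,(b) identifies $H$ as a compact object of $\Fun(Q_0,\cat{C})$, and $m_!$ preserves compact objects because its right adjoint $m^*$ preserves filtered colimits. Consequently $i_*F\simeq m_!H$ is compact in $\Fun(P,\cat{C})$. The main obstacle is the pointwise vanishing in the previous paragraph; it is exactly here that the local finiteness hypothesis on $P$ intervenes, both to ensure $Q_0$ is finite and to control the support of $i_*F$.
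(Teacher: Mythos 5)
Your proof is essentially sound and runs along the same lines as the paper's, but one step is mis-justified. You write that because $k_0^*$ preserves filtered colimits pointwise, ``its left adjoint preserves compact objects, so $F|_Q$ is compact.'' That inference is a non sequitur: the left adjoint of $k_0^*$ is $(k_0)_!$, and the conclusion you need is that $F|_Q = k_0^*F$ is compact, not that something in the \emph{image} of $(k_0)_!$ is. The correct justification is short: $(k_0)_!$ is fully faithful (left Kan extension along a full subcategory inclusion) and preserves filtered colimits, so
\(\Map(F|_Q,\injlim G_i)\simeq\Map\bigl((k_0)_!F|_Q,(k_0)_!\injlim G_i\bigr)\simeq\Map(F,\injlim (k_0)_!G_i)\simeq\injlim\Map(F,(k_0)_!G_i)\simeq\injlim\Map(F|_Q,G_i)\),
using that $F$ is compact. (Arguing instead that $(k_0)_*$ preserves filtered colimits would require finite limits to commute with filtered colimits in $\cat{C}$, which is not part of the hypothesis.) With that repaired, the rest of your argument is correct: the pointwise computations, the identification $i_*F\simeq m_!H$, and the invocation of \cref{b8ebfe9ce2} to conclude compactness of $H$ over the finite poset $Q_0$.

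The only structural difference from the paper's proof is that you run the argument for an arbitrary compact $F$, using \cref{b8ebfe9ce2}(a) to find the finite support $Q$ and then forming its upward hull $Q_0$. The paper instead first reduces, via \cref{0ec8cf3aa6} and closure under finite colimits and retracts, to the generators $j(p)\otimes C$; in that case $Q=\{p\}$ and $Q_0=P_{\geq p}$, so the value computation collapses to three cases and the upward-hull bookkeeping becomes unnecessary. Both routes hinge on the same two ingredients — \cref{b8ebfe9ce2} over a finite poset and local finiteness to guarantee that the relevant upward set is finite — so the gain of the paper's reduction is purely one of brevity.
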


\begin{proof}
  Let \(p\in P_0\) be an element
  and \(C\) a compact object of~\(\cat{C}\).
  Since \(i_*\) preserves (finite) colimits
  by \cref{0ec8cf3aa6},
  it suffices to show that
  \(F=i_*(j(p)\otimes C)\) is compact,
  where \(j\)
  denotes the Yoneda embedding \(P_0^{\op}\hookrightarrow\Fun(P_0,\Cat{S})\).
  Now we compute \(F(q)\) for \(q\in P\):
  If \(q\in P_{\geq p}\cap P_0\), it is \(C\).
  If \(q\notin P_0\), it is final
  and thus initial since \(\cat{C}\) is pointed.
  Otherwise, it is initial.
  This computation shows that \(F\rvert_{P\setminus P_{\geq p}}\) is initial,
  which means that
  \(F\) is the left Kan extension of \(F\rvert_{P_{\geq p}}\),
  as \(P_{\geq p}\) is upward closed.
  This computation also shows that
  \(F\rvert_{P_{\geq p}}\) takes compact values,
  which means by \cref{b8ebfe9ce2} that \(F\rvert_{P_{\geq p}}\) is compact,
  as \(P_{\geq p}\) is finite.
  Hence the desired result follows.
\end{proof}

\begin{lemma}\label{8eca80f4cb}
  Let \(P\) be a locally finite poset
  and \(\cat{C}\) a compactly generated pointed \(\infty\)-category.
  Then every compact object in \(\Fun(P,\cat{C})\)
  is a right Kan extension of
  its restriction to
  \(\bigcup_{s\in S}P_{\leq s}\)
  for some \(S\in\Pow^{\fin}(P)\).
\end{lemma}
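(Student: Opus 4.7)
The plan is to apply \cref{b8ebfe9ce2}: every compact object $F\in\Fun(P,\cat{C})$ is the left Kan extension along some inclusion $j\colon T\hookrightarrow P$ of its restriction $F|_T$, for a finite full subposet $T\subseteq P$. Pointwise this gives $F(q)\simeq\operatorname{colim}_{t\in T,\,t\leq q}F(t)$, which is the zero object of $\cat{C}$ when the indexing poset is empty. Given such a $T$, I would take $S$ to consist, for each $t\in T$, of all maximal elements of $P_{\geq t}$; these exist and are finite in number by local finiteness, and they are automatically maximal in $P$ itself. Set $P_0=\bigcup_{s\in S}P_{\leq s}$ and $i\colon P_0\hookrightarrow P$; the goal is then $F\simeq i_*(F|_{P_0})$.

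Two easy observations reduce this to a pointwise check. First, $T\subseteq P_0$, since each $t\in T$ lies beneath some maximal element of $P_{\geq t}$, which by construction is in $S$. Second, $F$ vanishes outside $P_0$: if $F(q)\neq 0$ then the colimit formula forces $t\leq q$ for some $t\in T$, hence $q\in P_{\geq t}$ sits below some maximal element of $P_{\geq t}$, which lies in $S$, placing $q$ in $P_0$.

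For the verification of $F\simeq i_*(F|_{P_0})$, the right-hand side at $q$ is the limit of $F$ over the indexing poset $P_0\cap P_{\geq q}$. When $q\in P_0$ this poset has $q$ as a least element, so the limit is $F(q)$. When $q\notin P_0$ the poset is empty: any $p\in P_0\cap P_{\geq q}$ would satisfy $q\leq p\leq s$ for some $s\in S$, forcing $q\in P_0$. The empty limit is the terminal object, which in a pointed $\cat{C}$ coincides with zero, matching $F(q)=0$ from the second observation.

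The only subtlety is the choice of $S$: the naive $S=T$ fails because the left Kan extension $j_!(F|_T)$ can take nonzero values strictly above $T$, so the support of $F$ need not sit inside $\bigcup_{t\in T}P_{\leq t}$. Enlarging $T$ to the maximal elements above it—only possible because $P$ is locally finite—is exactly what endows $P_0$ with the two complementary properties needed: containing the support of $F$, and the ``no escape'' property $q\notin P_0\Rightarrow P_0\cap P_{\geq q}=\emptyset$, the latter forcing the right Kan extension to produce zero outside $P_0$.
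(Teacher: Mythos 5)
Your proof is correct and follows essentially the same route as the paper's. You invoke the same preparatory lemma to write $F$ as a left Kan extension from a finite full subposet $T$ (the paper's $Q$), then enlarge to a finite $S$ whose downward closure $P_0$ is a sieve containing the support of $F$ — the paper literally takes $S=\bigcup_{q\in Q}P_{\geq q}$, while you take the maximal elements of each $P_{\geq t}$, but the resulting sieves coincide — and finally exploit pointedness to collapse the initial and final objects outside $P_0$. The only cosmetic difference is in the final verification: the paper checks that the composite $i_!i^*F\to F\to i_*i^*F$ is an equivalence, which makes the on-$P_0$ case entirely formal, whereas you compute the unit $F\to i_*i^*F$ pointwise directly; if you retain your phrasing you should make explicit that you are checking that the unit map (not merely the values) is an equivalence, which is immediate since it is the identity over $P_0$ and a map between zero objects outside.
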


\begin{proof}
  Let \(F\) be a compact object.
  By \cref{b8ebfe9ce2},
  there is a finite full subposet~\(Q\)
  such that \(F\) can be identified with the
  left Kan extension of \(F\lvert_Q\) along \(Q\hookrightarrow P\).
  We take \(S=\bigcup_{q\in Q}P_{\geq q}\),
  which is finite since \(P\) is locally finite,
  and consider the inclusion
  \(i\colon P_S=\bigcup_{s\in S}P_{\leq s}\hookrightarrow P\).
  Since \(P_S\) contains~\(Q\), the morphism
  \(i_!i^*F\to F\) is an equivalence.
  Hence it suffices to show that
  the composite \(i_!i^*F\to F\to i_*i^*F\) is an equivalence.
  As its restriction to~\(P_S\) is an equivalence,
  we consider \(p\notin P_S\).
  Then \((i_!i^*F)(p)\) is initial
  since no \(q\in Q\) satisfies \(q\leq p\)
  and \((i_*i^*F)(p)\) is final
  since \(P_S\) is downward closed.
  Since \(\cat{C}\) is pointed, the desired claim follows.
\end{proof}

\begin{proof}[Proof of \cref{13a6000b87}]
  According to \cref{fa97259e33},
  the diagram actually lands in \(\Cat{Pr}_{\cat{\omega}}\),
  the \(\infty\)-category of compactly generated \(\infty\)-categories
  and functors preserving colimits and compact objects.
  Since the inclusion
  \(\Cat{Pr}_{\omega}\hookrightarrow\Cat{Pr}\)
  preserves colimits
  by \autocite[Theorem~5.5.3.18 and Proposition~5.5.7.6]{LurieHTT},
  it suffices to show
  that its restriction \(\Pow^{\fin}(P)^{\triangleright}
  \to\Cat{Pr}_{\omega}\) is colimiting.
  Furthermore,
  since \(\Pow^{\fin}(P)\) is filtered,
  it suffices to show 
  that its composite with \((\X)^{\omega}
  \colon\Cat{Pr}_{\cat{\omega}}\to\Cat{Cat}_{\infty}\)
  is colimiting.
  Then the desired claim follows from \cref{cbb999bf61,8eca80f4cb}.
\end{proof}

One might think that
the desired equivalence could be immediately
obtained from \cref{8d61256fe2,13a6000b87}
by taking the colimit of
the assignment
given by
\begin{equation*}
  \phantom,
  \Pow^{\fin}(P)\ni
  S\longmapsto
  \bigl(\DD_{P_S}
  \colon\Fun(P_S,\Cat{Sp})
  \to
  \Fun(P_S^{\op},\Cat{Sp})
  \bigr)\in\Fun(\Delta^1,\Cat{Pr})
  ,
\end{equation*}
where \(P_S\) denotes \(\bigcup_{s\in S}P_{\leq s}\).
However, what we have proven in \cref{ss-pot} is
not sufficient in order to construct such a functor directly.
We avoid this issue by first constructing the desired functor~\(\DD\)
for~\(P\):

\begin{definition}\label{eb22338bf4}
  For a locally finite poset~\(P\),
  we define
  \(\Gamma_{\cpt}\colon\Fun(P,\Cat{Sp})\to\Cat{Sp}\)
  as the colimit of the functor
  \(\Pow^{\fin}(P)\to\Fun(\Delta^1,\Cat{Pr})\)
  given by \(Q\mapsto(\Gamma\colon\Fun(Q,\Cat{Sp})\to\Cat{Sp})\).
  Note that the source is identified with \(\Fun(P,\Cat{Sp})\)
  by \cref{13a6000b87}
  and the target is identified with \(\Cat{Sp}\)
  by \cref{cbb999bf61}
  and the fact that \(\Down^{\fin}(P)\) is weakly contractible.
  From this,
  we obtain
  \(\DD\colon\Fun(P,\Cat{Sp})\to\Fun(P^{\op},\Cat{Sp})\)
  as in \cref{ss-pot}.
\end{definition}

Now the following two results imply \cref{lf-arbitrary}:

\begin{proposition}\label{924159442f}
  Let \(P\)
  be a locally finite poset
  and
  \(F\colon P\to\Cat{Sp}\)
  a functor.
  If \(P_{\leq p}\)
  is finite for each \(p\in P\),
  the functor \(\DD(F)\colon P^{\op}\to\Cat{Sp}\)
  is pointwise given by
  \(p\mapsto\projlim_{q\in P}\Map(p,q)\otimes F(q)\).
\end{proposition}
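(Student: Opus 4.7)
The plan is to unwind \cref{eb22338bf4} into a pointwise formula and reduce to \cref{f6072744fd} using the finiteness hypotheses. First, there is a canonical equivalence $[\Fun(P,\Cat{Sp}),\Cat{Sp}] \simeq \Fun(P^{\op},\Cat{Sp})$ (coming from the universal property of $\Fun(P,\Cat{Sp})$ as the free presentable stable $\infty$-category on $P^{\op}$) that sends a colimit-preserving functor $\Phi$ to $p \mapsto \Phi(\SS_{\geq p})$, where $\SS_{\geq p}$ denotes the left Kan extension of $\SS$ along $\{p\} \hookrightarrow P$ (i.e., the representable at $p \in P^{\op}$). Unwinding the construction of \cref{ss-pot} applied to the commutative algebra $\Fun(P,\Cat{Sp})$ in $\Cat{Pr}_{\st}$ (with pointwise tensor as multiplication and augmentation $\Gamma_{\cpt}$), one then finds $\DD(F)(p) \simeq \Gamma_{\cpt}(F \otimes \SS_{\geq p})$.

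Next I would observe that $(F \otimes \SS_{\geq p})(q) \simeq F(q) \otimes \SS_{\geq p}(q)$, which equals $F(q)$ when $p \leq q$ and vanishes otherwise; this is precisely $\Map(p,q) \otimes F(q)$. In particular, the support of $F \otimes \SS_{\geq p}$ lies in the cosieve $P_{\geq p}$, which is finite by local finiteness of $P$. Setting $Q = P_{\geq p} \in \Pow^{\fin}(P)$, the sieve $P_Q = \bigcup_{s \geq p} P_{\leq s}$ is itself finite (this is where the hypothesis that each $P_{\leq s}$ is finite intervenes) and contains $P_{\geq p}$, so $F \otimes \SS_{\geq p}$ lies in the essential image of the right Kan extension $i_{Q*} \colon \Fun(P_Q,\Cat{Sp}) \hookrightarrow \Fun(P,\Cat{Sp})$. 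The key fact is then that $\Gamma_{\cpt}$ agrees with $\projlim_{P}$ on such finitely supported objects: by the colimit description in \cref{eb22338bf4}, $\Gamma_{\cpt} \circ i_{Q*} \simeq \projlim_{P_Q}$; and since $i_{Q*}$ is right adjoint to restriction, $\projlim_{P} \circ i_{Q*} \simeq \projlim_{P_Q}$ as well. Combining,
\begin{equation*}
  \DD(F)(p)
  \simeq \Gamma_{\cpt}(F \otimes \SS_{\geq p})
  \simeq \projlim_{P}(F \otimes \SS_{\geq p})
  \simeq \projlim_{q \in P} \Map(p,q) \otimes F(q),
\end{equation*}
as required.

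The main subtlety is the initial identification $\DD(F)(p) \simeq \Gamma_{\cpt}(F \otimes \SS_{\geq p})$, which requires carefully unpacking the construction of \cref{ss-pot} together with the Yoneda-style identification of the internal mapping object. Once that is in place, the remainder is routine bookkeeping exploiting the finite support of $F \otimes \SS_{\geq p}$ furnished by the two finiteness hypotheses.
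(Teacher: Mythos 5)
Your proof is correct, and it takes a genuinely different (and in some respects cleaner) route than the paper's. The paper first observes that the target functor $F\mapsto\projlim_{q}\Map(p,q)\otimes F(q)$ preserves colimits, reduces to compact $F$, invokes \cref{8eca80f4cb} to realize $F$ as a right Kan extension from a finite sieve $P_S$ with $p\in S$, and then quotes the finite-poset formula \cref{f6072744fd}. You instead make explicit the Yoneda-style identification $\DD(F)(p)\simeq\Gamma_{\cpt}(F\otimes\SS_{\geq p})$ and note that $F\otimes\SS_{\geq p}$ is supported on the finite cosieve $P_{\geq p}$ \emph{for every} $F$, hence lies in the image of $i_{Q*}$ for the finite sieve $P_Q=\bigcup_{s\geq p}P_{\leq s}$; the colimit description of $\Gamma_{\cpt}$ then gives the formula directly. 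What this buys you: the reduction to compact $F$ and the appeal to \cref{8eca80f4cb} become unnecessary, and the role of the two finiteness hypotheses (local finiteness for $P_{\geq p}$, $P_{\leq s}$ finite for $P_Q$) is more transparent. What the paper's route buys instead is economy of exposition, since \cref{f6072744fd} and \cref{8eca80f4cb} were already established and can be cited in one line, whereas your argument re-derives the relevant piece of the colimit formalism in-line. Both proofs rely on the same two underlying observations: that $\Gamma_{\cpt}$ agrees with $\projlim_{P_Q}$ on objects supported in a finite sieve $P_Q$, and that $\projlim_P\circ i_{Q*}\simeq\projlim_{P_Q}$ by passing to right adjoints. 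Minor notational quibble: you use $Q$ both for the finite subset $P_{\geq p}\in\Pow^{\fin}(P)$ and as the index of the sieve $P_Q$ and of $i_{Q*}\colon\Fun(P_Q,\Cat{Sp})\to\Fun(P,\Cat{Sp})$; aligning with the paper's convention (where $S$ is the finite subset and $P_S$ the sieve) would avoid ambiguity.
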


\begin{proof}
  We fix~\(p\) and vary~\(F\).
  Then
  \(F\mapsto\projlim_{q\in P}\Map(p,q)\otimes F(q)\)
  preserves colimits
  by the finiteness assumption on~\(P\).
  Hence we can assume that \(F\) is compact.
  By \cref{8eca80f4cb},
  we can find \(S\in\Pow^{\fin}(P)\)
  such that \(F\)
  is a right Kan extension of
  its restriction to
  \(\bigcup_{s\in S}P_{\leq s}\).
  By replacing \(S\) with \(S\cup\{p\}\),
  we can assume \(p\in S\).
  Then the desired result
  follows from \cref{f6072744fd}
  since \(\bigcup_{s\in S}P_{\leq s}\)
  is finite by assumption.
\end{proof}

\begin{theorem}\label{7873d83c85}
  Let \(P\)
  be a locally finite poset.
  If \(P_{<p}\) is finite and Verdier
  for each \(p\in P\),
  the pair \((\Fun(P,\Cat{Sp}),\Gamma_{\cpt})\)
  is a commutative Frobenius algebra
  in \(\Cat{Pr}_{\st}\).
  In particular, \(\DD\) is an equivalence.
\end{theorem}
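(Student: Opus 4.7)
The plan is to reduce to the finite-poset case via the filtered colimit decomposition. By \cref{a3ad5064a0}, it suffices to check that \(\Fun(P,\Cat{Sp})\) is dualizable in \(\Cat{Pr}_{\st}\) and that the induced map \(\DD\colon \Fun(P,\Cat{Sp}) \to \Fun(P^{\op},\Cat{Sp})\) is an equivalence. Dualizability is built into \cref{13a6000b87}: it exhibits \(\Fun(P,\Cat{Sp})\) as a filtered colimit in \(\Cat{Pr}_{\omega}\) of the compactly generated finite pieces \(\Fun(P_S,\Cat{Sp})\), where \(P_S = \bigcup_{s\in S} P_{\leq s}\) for \(S\in\Pow^{\fin}(P)\); such colimits remain compactly generated and hence dualizable.

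To prove \(\DD\) is an equivalence, I would first argue that each \(P_S\) is a finite Verdier poset. Since \(P_S\) is downward closed in \(P\), the intervals \((P_S)_{<r}\) coincide with \(P_{<r}\) for every \(r\in P_S\), and the hypothesis on each \(P_{<p}\) (combined with the characterization in \cref{main}) supplies the requisite Gorenstein\(^*\)-ness of \(P_{<r}\); \cref{main} then shows \(P_S\) itself is Verdier, so \(\DD_{P_S}\) is an equivalence.

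The maps \(\DD_{P_S}\) are compatible across \(S\): for each sieve inclusion \(i\colon P_S \hookrightarrow P_{S'}\), \cref{50e6a4f108} together with \cref{00461cedb4} yields a natural commuting square relating \(\DD_{P_S}\) and \(\DD_{P_{S'}}\) via the extension-by-zero functors along this sieve in \(P\) and the corresponding cosieve in \(P^{\op}\). Combined with the colimit description of \(\Gamma_{\cpt}\) in \cref{eb22338bf4} and the pointwise formula of \cref{924159442f} (which identifies \(\DD(i_*G) \simeq j_!\DD_{P_S}(G)\) on compact objects), these squares realize \(\DD\) as a filtered colimit of equivalences in \(\Cat{Pr}_{\st}\), whence \(\DD\) is itself an equivalence; \cref{a3ad5064a0} then delivers the commutative Frobenius algebra structure.

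The main obstacle is securing the Verdier property of each finite piece \(P_S\): the delicate case is an \(r\in P_S\) that happens to be maximal in \(P\), where the hypothesis directly supplies only that \(P_{<r}\) is Verdier and not Gorenstein\(^*\). To convert this into what \cref{main} demands, I would argue inductively on the finite poset \(P_S\), or reorganise the argument along the lines of the recollement framework of \cref{ss-fin} so as to avoid requiring strict Verdier-ness of every \(P_S\) a priori. Once this verification is secured, the remainder is formal colimit manipulation resting on the naturality results of \cref{ss-pot}.
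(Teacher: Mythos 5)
Your scaffolding coincides with the paper's proof: decompose \(\Fun(P,\Cat{Sp})\) via \cref{13a6000b87,8d61256fe2}, use \cref{00461cedb4} applied to \cref{50e6a4f108} to make the finite-piece dualities \(\DD_{P_S}\) into subobjects of \(\DD\), and conclude by taking the filtered colimit. You also correctly put your finger on the crux: you need each \((P_S)_{<r}=P_{<r}\) to be \emph{Gorenstein*} in order to invoke \cref{main}(ii), whereas the stated hypothesis only provides that \(P_{<r}\) is \emph{Verdier}.

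The obstruction you flag is, however, not one you can argue around — the statement as written is false, and no inductive reorganisation will rescue it. Take \(P=\{0<1\}\). Then \(P_{<0}=\emptyset\) and \(P_{<1}=\{0\}\); both are finite and Verdier by \cref{main}(ii), since \(\emptyset\) is Gorenstein* (it is \(S^{-1}\)). But \(P\) is not Verdier: \(\Gamma(P;\ZZ_{[0,1]})\simeq\ZZ\neq0\), violating \cref{main}(iii), and equivalently \(\{0\}\) is not Gorenstein* because a point does not have the homology of any sphere. The hypothesis should say ``\(P_{<p}\) is finite and \emph{Gorenstein*}'' (as in the sibling \cref{lf-arbitrary}); since \(P_S\) is downward-closed in \(P\), one then has \((P_S)_{<r}=P_{<r}\) for each \(r\in P_S\), so \cref{main}(ii) gives that each finite \(P_S\) is Verdier and the rest of your argument (and the paper's) goes through. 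So: right route, right identification of the gap, but wrong diagnosis of the gap as fillable — it reflects a misstated hypothesis, not a missing lemma.
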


\begin{proof}
  By \cref{a3ad5064a0},
  we only need to show that \(\DD\) is an equivalence.
  For \(S\in\Pow^{\fin}(P)\),
  let \(P_S\) denote \(\bigcup_{s\in S}P_{\leq s}
  \in\Down(P)\).
  We regard \(\DD\) as an object
  of \(\Fun(\Delta^1,\Cat{Pr}_{\st})\)
  and consider the (essential) poset of
  subobjects \(\Sub(\DD)\).
  By \cref{00461cedb4}
  applied to \cref{50e6a4f108},
  each \(S\in\Pow^{\fin}(P)\) determines
  \(\DD_{P_S}\in\Sub(\DD)\).
  Hence we obtain
  the morphism of posets
  \(\Pow^{\fin}(P)\to\Sub(\DD)\).
  Then we consider
  the composite
  \begin{equation*}
    \phantom,
    \Pow^{\fin}(P)^{\triangleright}
    \longrightarrow\Sub(\DD)
    \longrightarrow\Fun(\Delta^1,\Cat{Pr}_{\st}),
  \end{equation*}
  where we set \(\infty\mapsto\DD\)
  in the first arrow.
  This is colimiting
  by \cref{13a6000b87,8d61256fe2}.
  By assumption
  and \cref{main},
  the functor
  \(\DD_{P_S}\) is an equivalence
  for \(S\in\Pow^{\fin}(P)\).
  Therefore, \(\DD\) is also an equivalence.
\end{proof}

\begin{remark}\label{53c4ee6c29}
  We can define the lower shriek functor
  for a morphism between posets
  satisfying the condition of \cref{lf-arbitrary}
  as the functor
  corresponding to the cosheaf pushforward
  under the duality equivalences.
  See \cref{0f134964dc}
  for the locally compact Hausdorff case.
\end{remark}

\subsection{In terms of sheaves}\label{ss-alex}

We explain that
our duality for a poset can be interpreted as 
a sheaf-cosheaf duality over its Alexandroff space,
which we recall as follows:

\begin{definition}\label{e407564b20}
  The \emph{Alexandroff space} \(\Alex(P)\)
  of a poset~\(P\)
  is the topological space
  whose underlying set is that of~\(P\) and
  whose open sets are the upward-closed subsets.
\end{definition}

We recall the following fact,
which was first proven in \autocite[Example~A.11]{ttg-fun}.

\begin{theorem}[Aoki]\label{7af5d021e3}
  The assignment \(F\mapsto(p\mapsto F(P_{\geq p}))\)
  determines the inverse image functor of a geometric morphism
  \begin{equation}
    \phantom.
    \label{e-5a26004a}
    \Fun(P,\Cat{S})=\PShv(P^{\op})\longrightarrow\Shv(\Alex(P)).
  \end{equation}
  This identifies \(\Shv(\Alex(P))\)
  as the bounded reflection of \(\PShv(P^{\op})\)
  and \(\PShv(P^{\op})\) as
  the hypercompletion of \(\Shv(\Alex(P))\).
\end{theorem}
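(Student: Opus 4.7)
The statement is Aoki's earlier \autocite[Example~A.11]{ttg-fun}, so my plan is to sketch the natural approach from scratch, justifying both the existence of the geometric morphism and the two identifications.

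First, I would identify the candidate inverse image with a stalk-type functor. The assignment $F\mapsto(p\mapsto F(P_{\geq p}))$ is the composition of the forgetful functor $\Shv(\Alex(P))\hookrightarrow\PShv(\mathrm{Open}(\Alex(P)))$ with restriction along $u\colon P^{\op}\to\mathrm{Open}(\Alex(P))^{\op}$, $p\mapsto P_{\geq p}$. The Alexandroff topology has the decisive feature that $P_{\geq p}$ is the smallest open containing~$p$, so this functor coincides with the ``take all stalks'' functor. It therefore preserves finite limits tautologically and all small colimits (a standard property of stalks in sheaf topoi). By the adjoint functor theorem it admits a right adjoint, exhibiting the pair as the inverse image and direct image of a geometric morphism in the direction of~\cref{e-5a26004a}.

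For the hypercompletion identification, I would invoke the general principle that hypercomplete sheaves on an Alexandroff space are determined by their stalks: any functor $F\colon P\to\Cat{S}$ extends uniquely to a hypercomplete sheaf on $\Alex(P)$ via $U\mapsto\projlim_{p\in U}F(p)$, and conversely every hypercomplete sheaf arises in this way. This furnishes an equivalence $\Shv(\Alex(P))^{\wedge}\simeq\Fun(P,\Cat{S})$ under which the inverse image of~\cref{e-5a26004a} becomes precisely the hypercompletion functor. For the bounded reflection identification, I would interpret the assertion as the dual universal property: $\Shv(\Alex(P))$ is characterized as an $\infty$-topos whose hypercompletion recovers $\PShv(P^{\op})$, realized concretely as the $\infty$-topos presented by the Grothendieck topology of bounded (finitary) covers on $\mathrm{Open}(\Alex(P))$ as opposed to arbitrary covers.

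The main obstacle will be giving the term ``bounded reflection'' a precise meaning in this $\infty$-categorical setting and verifying the corresponding universal property: while the hypercompletion statement reduces to a routine stalk computation using the fact that $P_{\geq p}$ is initial among opens containing~$p$, establishing that $\Shv(\Alex(P))$ is universal among non-hypercomplete $\infty$-topoi whose hypercompletion agrees with $\PShv(P^{\op})$ requires either invoking the site-theoretic characterization from~\autocite{ttg-fun} or analyzing the bounded descent condition directly.
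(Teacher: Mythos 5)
The paper itself does not prove this statement: it is cited verbatim from the author's earlier paper (\autocite[Example~A.11]{ttg-fun}), so there is no ``paper's own proof'' to compare against. Evaluating your sketch on its own merits, I see one part that is essentially fine and two genuine gaps.

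The construction of the geometric morphism is essentially correct. Identifying $F\mapsto(p\mapsto F(P_{\geq p}))$ with the stalks functor, using that $P_{\geq p}$ is the minimal (equivalently, a supercompact) open neighbourhood of~$p$, and observing that each stalk is the inverse image of the geometric point $\{p\}\hookrightarrow\Alex(P)$, does give left exactness and colimit preservation; since colimits in $\Fun(P,\Cat{S})$ are pointwise, the combined functor is left exact and cocontinuous, hence a geometric inverse image. (Minor notational slip: the restriction is along $P\to\mathrm{Open}(\Alex(P))^{\op}$, not $P^{\op}\to\mathrm{Open}(\Alex(P))^{\op}$.)

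The hypercompletion claim, however, is asserted rather than proven. You write that ``any functor $F\colon P\to\Cat{S}$ extends uniquely to a hypercomplete sheaf via $U\mapsto\projlim_{p\in U}F(p)$, and conversely every hypercomplete sheaf arises this way'' --- but this \emph{is} the theorem. What actually needs to be shown is: (a) the right adjoint $f_*$ is fully faithful (this part is indeed a stalk computation, using that $p$ is initial in $P_{\geq p}$), and (b) the essential image of $f_*$ is exactly the full subcategory of hypercomplete objects. Step (b) is the substantive one: showing that a sheaf on $\Alex(P)$ is in the image iff it is hypercomplete requires, on the one hand, that $f_*G$ is always hypercomplete (because $f^*$ inverts $\infty$-connective maps, so $\Map(-,f_*G)$ is insensitive to them), and on the other hand that the unit $F\to f_*f^*F$ is an equivalence whenever $F$ is hypercomplete. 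The latter requires showing that a map of sheaves on $\Alex(P)$ which is an equivalence on all stalks is $\infty$-connective, which rests on the nontrivial fact that stalks jointly detect $n$-connectivity for each finite~$n$. None of this is in your sketch; ``hypercomplete sheaves on an Alexandroff space are determined by their stalks'' smuggles in precisely what must be justified, and it is exactly the point where $\Shv(\Alex(P))$ and $\Fun(P,\Cat{S})$ genuinely differ (cf.\ the counterexample in \autocite[Example~A.13]{ttg-fun}).

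The bounded-reflection claim is essentially unaddressed. You propose reading ``bounded'' as a universal property among $\infty$-topoi whose hypercompletion agrees with $\PShv(P^{\op})$, or as sheaves for a topology of ``bounded (finitary) covers,'' but neither reading is justified, and the latter does not fit here since the defining covers $U=\bigcup_{p\in U}P_{\geq p}$ of the Alexandroff site need not be finite. You rightly flag this as ``the main obstacle,'' but flagging it is not resolving it; as written the proposal does not establish this half of the statement at all, and ultimately falls back on citing the original reference for it.
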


Note that this geometric morphism is not an equivalence in general;
see \autocite[Example~A.13]{ttg-fun}.
However, it is an equivalence in the situation we are interested in:

\begin{proposition}\label{f5b86f5e98}
  If \(P\) is a locally finite poset,
  \cref{e-5a26004a} is an equivalence.
\end{proposition}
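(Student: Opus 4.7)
The plan is to use \cref{7af5d021e3}, whose second assertion exhibits $\Fun(P,\Cat{S}) = \PShv(P^{\op})$ as the hypercompletion of $\Shv(\Alex(P))$. Hence the geometric morphism \cref{e-5a26004a} is an equivalence exactly when $\Shv(\Alex(P))$ is itself hypercomplete, i.e., when every $\infty$-connective morphism in $\Shv(\Alex(P))$ is already an equivalence. The remainder of the argument will establish this hypercompleteness from the local finiteness hypothesis.

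I will proceed by a local-to-global principle. Both being an equivalence and being $\infty$-connective are detected on an effective epimorphic family, so hypercompleteness propagates from slices: an $\infty$-topos $\cat{X}$ is hypercomplete whenever there is an effective epimorphism $\coprod_i U_i \twoheadrightarrow 1$ such that each slice $\cat{X}_{/U_i}$ is hypercomplete. I will apply this to the family of basic opens $\{P_{\geq p}\}_{p \in P}$, which forms an open cover of $\Alex(P)$ and therefore yields the required effective epimorphism in $\Shv(\Alex(P))$.

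For each $p \in P$, the slice $\Shv(\Alex(P))_{/P_{\geq p}}$ identifies with $\Shv(\Alex(P_{\geq p}))$ since $P_{\geq p}$ is an open subspace of $\Alex(P)$. By local finiteness $P_{\geq p}$ is a finite poset, so $\Alex(P_{\geq p})$ is a finite topological space. Sheaves on a finite Alexandroff space form an $\infty$-topos of finite homotopy dimension (bounded by the length of a longest chain in $P_{\geq p}$), and $\infty$-topoi of finite homotopy dimension are hypercomplete by \autocite[Corollary~7.2.1.12]{LurieHTT}. Combining these observations gives hypercompleteness of $\Shv(\Alex(P))$, and the proposition follows.

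The main obstacle I anticipate is in locating and applying the precise forms of the locality statement for hypercompleteness and the finite-homotopy-dimension bound for finite Alexandroff topoi; both are standard facts, but the second one in particular requires a careful check that the homotopy dimension of $\Shv(\Alex(Q))$ for a finite poset $Q$ is indeed controlled by chains in $Q$, which one could prove directly by induction on the size of $Q$ using the recollement from \cref{a9ca3865c3} if no clean reference is available.
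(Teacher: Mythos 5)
Your argument is essentially the same local-to-global reduction that the paper uses: both proofs invoke \cref{7af5d021e3} to reduce the claim to hypercompleteness of $\Shv(\Alex(P))$, and both establish hypercompleteness by a cover-by-opens argument using the basic opens $P_{\geq p}$ (the paper phrases this as writing $\Shv(\Alex(P))$ as a colimit of $\Shv(\Alex(P_{\geq p_1}\cap\dotsb\cap P_{\geq p_n}))$ in the $\infty$-category of $\infty$-toposes, which is the same thing). The difference lies entirely in the base case for finite posets. The paper simply cites \autocite[Example~A.12]{ttg-fun}, which already establishes that \cref{e-5a26004a} is an equivalence when $P$ is finite, and hence that $\Shv(\Alex(Q))$ is hypercomplete for finite $Q$ (being equivalent to a presheaf $\infty$-topos). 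You instead attempt to prove hypercompleteness for finite $Q$ directly via finite homotopy dimension and \autocite[Corollary~7.2.1.12]{LurieHTT}. This is plausible but, as you anticipate, has a loose end: you need the covering dimension of the finite Alexandroff space $\Alex(Q)$ to be finite, and then you must invoke \autocite[Theorem~7.2.3.6]{LurieHTT} (which requires paracompactness, true for finite spaces) to bound the homotopy dimension of the sheaf $\infty$-topos. While the covering dimension of a finite $T_0$ space is indeed the maximal chain length, this is not entirely trivial to verify from scratch, whereas citing the finite-poset case of \cref{7af5d021e3} (as the paper does, and as you already invoke for the comparison to $\Fun(P,\Cat{S})$) sidesteps this entirely. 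So your route is correct in outline but trades a one-line citation for a nontrivial point-set verification; if a clean reference for the covering-dimension bound isn't at hand, the paper's base case is the more economical choice.
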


\begin{proof}
  According to \autocite[Example~A.12]{ttg-fun},
  this is true for finite posets.
  By \cref{7af5d021e3},
  the morphism \cref{e-5a26004a} is an equivalence
  if and only if \(\Shv(\Alex(P))\) is hypercomplete.
  Since \(\Shv(\Alex(P))\)
  can be written as a colimit of
  \(\Shv(\Alex(P_{\geq p_1}\cap\dotsb\cap P_{\geq p_n}))\)
  for \(p_1\), \dots,~\(p_n\in P\) and \(n\geq1\)
  in the \(\infty\)-category of 
  \(\infty\)-toposes,
  \(\Shv(\Alex(P))\) is hypercomplete
  when \(P\) is locally finite.
\end{proof}

\begin{remark}\label{c29c50ee4d}
  Note that by using \autocite[Corollary~2.6]{AsaiShah}
  instead of \autocite[Example~A.12]{ttg-fun} in the proof,
  we can obtain this result for a wider class of posets.
\end{remark}

\begin{remark}\label{c204a3567d}
  It is a consequence of
  \autocite[Theorem~3.4]{ClausenJansen}
  that the morphism \cref{e-5a26004a}
  is an equivalence for a poset
  satisfying the ascending chain condition.
  However, they use
  the ``geometric morphism''
  \(\Shv(X)\to\PShv(P^{\op})\)
  constructed in \autocite[page~27]{ClausenJansen}
  for a stratification \(X\to\Alex(P)\),
  which is not geometric in general;
  the trivial stratification on~\(\Alex(P)\)
  for the poset~\(P\) in \autocite[Example~A.13]{ttg-fun}
  gives a counterexample.
  Nevertheless,
  when \(P\) is locally finite,
  \cref{f5b86f5e98} shows that
  the morphism is indeed geometric.
\end{remark}

Hence \cref{lf-arbitrary},
which we have seen in \cref{ss-lf},
says the following:

\begin{theorem}\label{8d0ab00507}
  Let \(P\) be a locally finite poset
  such that \(P_{<p}\)
  is finite and Gorenstein* for each \(p\in P\).
  Then there is a canonical equivalence
  \begin{equation*}
    \phantom.
    \DD\colon
    \Shv_{\Cat{Sp}}(\Alex(P))
    \longrightarrow\cShv_{\Cat{Sp}}(\Alex(P)).
  \end{equation*}
\end{theorem}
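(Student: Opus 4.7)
The plan is to reduce this statement directly to \cref{lf-arbitrary} by translating the presheaf/opposite-presheaf language into the sheaf/cosheaf language via \cref{f5b86f5e98}. First, since \(P\) is locally finite (because \(P_{\geq p}\) is finite, as \(P_{<p}\) being Gorenstein* forces a well-defined rank by \cref{37f851549b}, but really we only need the locally finite hypothesis, which is built in since \(P_{\geq p}\) is part of the hypothesis in \cref{lf-arbitrary}), \cref{f5b86f5e98} yields an equivalence of \(\infty\)-topoi \(\Fun(P,\Cat{S})=\PShv(P^{\op})\simeq\Shv(\Alex(P))\).

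Next, I would tensor this with \(\Cat{Sp}\) in \(\Cat{Pr}\), using the conventions recalled at the end of the introduction: \(\Shv_{\Cat{Sp}}(\Alex(P))\simeq\Cat{Sp}\otimes\Shv(\Alex(P))\simeq\Cat{Sp}\otimes\Fun(P,\Cat{S})\simeq\Fun(P,\Cat{Sp})\), where the last equivalence comes from the fact that \(\Fun(P,\X)\) is corepresented by \(\Fun(P,\Cat{S})\) in \(\Cat{Pr}\). Dually, \(\cShv_{\Cat{Sp}}(\Alex(P))\simeq[\Shv(\Alex(P)),\Cat{Sp}]\simeq[\Fun(P,\Cat{S}),\Cat{Sp}]\simeq\Fun(P^{\op},\Cat{Sp})\), where in the last step one uses the universal property of the presheaf \(\infty\)-category \(\PShv(P^{\op})=\Fun(P,\Cat{S})\) to identify colimit-preserving functors out of it with arbitrary functors out of \(P^{\op}\).

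Once these two identifications are in place, the hypothesis of \cref{lf-arbitrary} is precisely the hypothesis of the theorem (both require \(P_{\geq p}\) finite and \(P_{<p}\) finite Gorenstein* for each \(p\in P\)), so I simply invoke it to obtain the desired canonical equivalence \(\DD\colon\Fun(P,\Cat{Sp})\to\Fun(P^{\op},\Cat{Sp})\), and transport it across the above identifications.

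There is essentially no obstacle here; the content is entirely in \cref{lf-arbitrary} and \cref{f5b86f5e98}. The only mildly delicate point to verify carefully is that the tensoring-with-\(\Cat{Sp}\) identification and the internal-mapping-into-\(\Cat{Sp}\) identification are compatible with the duality functor \(\DD\) constructed in \cref{eb22338bf4}; but since \(\DD\) is by construction induced from the global sections functor \(\Gamma_{\cpt}\) on \(\Fun(P,\Cat{Sp})\), which corresponds under \(\Shv_{\Cat{Sp}}(\Alex(P))\simeq\Fun(P,\Cat{Sp})\) to the global sections functor on the sheaf \(\infty\)-topos, this compatibility is formal.
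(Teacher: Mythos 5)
Your proof is correct and follows exactly the paper's intended derivation: identify \(\Shv(\Alex(P))\) with \(\Fun(P,\Cat{S})\) via \cref{f5b86f5e98}, use the conventions \(\Shv_{\Cat{Sp}}\simeq\Cat{Sp}\otimes\X\) and \(\cShv_{\Cat{Sp}}\simeq[\X,\Cat{Sp}]\) to rewrite the two sides as \(\Fun(P,\Cat{Sp})\) and \(\Fun(P^{\op},\Cat{Sp})\), and then invoke \cref{lf-arbitrary}. One small point: the parenthetical claim that the Gorenstein* condition on the \(P_{<p}\) forces \(P_{\geq p}\) to be finite is false (see \cref{4c27a68603}), but you immediately self-correct by observing that local finiteness is an explicit hypothesis of the theorem, so the argument is unaffected.
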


\section{Verdier duality for proper separated \texorpdfstring{\(\infty\)}{\textinfty}-toposes}\label{s-ch}

The sheaf-cosheaf duality
for locally compact Hausdorff spaces,
which is often called covariant Verdier duality,
was studied in \autocite[Section~5.5.5]{LurieHA}.
In this section, we first prove its generalization
using more abstract methods.
Then we prove \cref{str} using our formulation.
In future work,
we will study a relative variant.

\subsection{Proper separated \texorpdfstring{\(\infty\)}{\textinfty}-toposes}\label{ss-ps}

Following
\autocite[C2.4.16]{Elephant},
we say that a geometric morphism
is \emph{Beck--Chevalley}
if any pullback satisfies
the Beck--Chevalley condition;
i.e., the (unstable) proper base change theorem holds.
Recall that
in \autocite[Section~7.3.1]{LurieHTT}
a geometric morphism is called \emph{proper}
if its arbitrary base change is Beck--Chevalley.

\begin{definition}\label{53e4be788e}
  An \(\infty\)-topos \(\cat{X}\) is called \emph{separated}
  if its diagonal \(\cat{X}\to\cat{X}\times\cat{X}\) is proper.
\end{definition}

\begin{remark}\label{ea7702d67f}
  Consider a geometric morphism \(\cat{Y}\to\cat{X}\)
  between \(n\)-toposes.
  If the geometric morphism
  \(\Shv(\cat{Y})\to\Shv(\cat{X})\) between \(\infty\)-toposes
  is proper,
  its arbitrary base change is Beck--Chevalley
  in the \((n+1)\)-category of \(n\)-toposes,
  but not vice versa.
  This is why in \(1\)-topos theory
  we usually call a geometric morphism \emph{tidy}
  when its arbitrary base change is Beck--Chevalley
  in the \(2\)-category of \(1\)-toposes.
  The same remark applies to the notion of separatedness.
\end{remark}

However,
the following is proven 
in \autocite[Theorem~7.3.1.16]{LurieHTT}:

\begin{example}[Lurie]\label{884b0546f5}
  The sheaf \(\infty\)-topos
  of a compact Hausdorff space is proper and separated.
\end{example}

We recall the following notion,
which was introduced in \autocite[Appendix~D]{Gaitsgory15}:

\begin{definition}[Gaitsgory]\label{95976481a6}
  A presentably symmetric monoidal stable \(\infty\)-category \(\cat{C}\)
  is called \emph{rigid}
  if the unit \(u\colon\Cat{Sp}\to\cat{C}\) admits
  a colimit-preserving right adjoint
  and the multiplication \(m\colon\cat{C}\otimes\cat{C}\to\cat{C}\) admits
  a \(\cat{C}\otimes\cat{C}\)-linear\footnote{
    Here the colimit-preserving property
    is included in the definition of linearity.
  }
  right adjoint.
\end{definition}

If \(\cat{C}\) is rigid,
it is easy to see that
\(u^{\R}\circ m\) and \(m^{\R}\circ u\)
constitute a duality datum in \(\Cat{Pr}\),
where \(\X^{\R}\) indicates the right adjoint.
In particular,
\((\cat{C},u^{\R}\circ m)\) is a commutative Frobenius algebra.

\begin{theorem}\label{2bd8a0da2f}
  If \(\cat{X}\) is a proper separated \(\infty\)-topos,
  then \(\Shv_{\Cat{Sp}}(\cat{X})\) is rigid.
\end{theorem}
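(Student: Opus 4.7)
The plan is to match the two conditions in the definition of rigidity against the two properness hypotheses on~$\cat{X}$. Write $p\colon\cat{X}\to\ast$ for the terminal geometric morphism and $d\colon\cat{X}\to\cat{X}\times\cat{X}$ for the diagonal. Under the identification $\Shv_{\Cat{Sp}}(\cat{X})\otimes\Shv_{\Cat{Sp}}(\cat{X})\simeq\Shv_{\Cat{Sp}}(\cat{X}\times\cat{X})$, the unit $u$ is identified with $p^{\ast}$ and the multiplication $m$ with $d^{\ast}$, so that the conditions to verify are properties of their right adjoints $u^{\R}=p_{\ast}$ (global sections) and $m^{\R}=d_{\ast}$.

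For the first rigidity condition, I need $p_{\ast}\colon\Shv_{\Cat{Sp}}(\cat{X})\to\Cat{Sp}$ to preserve colimits. Properness of $\cat{X}$ means properness of $p$, so the Beck--Chevalley condition holds along arbitrary base changes of $p$; the standard consequence in the spectrum-valued setting is that $p_{\ast}$ preserves filtered colimits. Since $p_{\ast}$ is a right adjoint between stable $\infty$-categories, it is automatically exact, and combining this with filtered-colimit preservation yields preservation of all colimits.

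For the second rigidity condition, I need $d_{\ast}$ to be linear over $\Shv_{\Cat{Sp}}(\cat{X}\times\cat{X})$. Separatedness of $\cat{X}$ is precisely the statement that $d$ is proper, so the argument above applied to $d$ gives colimit preservation for $d_{\ast}$; what remains is the projection formula $d_{\ast}(d^{\ast}G\otimes F)\simeq G\otimes d_{\ast}F$ for $F\in\Shv_{\Cat{Sp}}(\cat{X})$ and $G\in\Shv_{\Cat{Sp}}(\cat{X}\times\cat{X})$, which is a formal consequence of proper base change applied to an appropriate pullback square involving $d$ and the fact that the external tensor product is computed via pullback along the two projections. The main obstacle I anticipate is the clean transfer from unstable topos-theoretic properness (Beck--Chevalley for sheaves of spaces) to the spectrum-valued statements above---both the colimit preservation of $p_{\ast}$ and the projection formula for $d_{\ast}$---via the compatibility of the functor $\Shv_{\Cat{Sp}}$ with base change in~$\Cat{Pr}$. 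Once this transfer is carried out, the remainder of the argument is essentially formal.
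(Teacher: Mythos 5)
Your proposal is correct and takes essentially the same approach as the paper. The paper's proof packages both rigidity conditions into a single reusable lemma (\cref{90680bda5b}: for any proper geometric morphism $f$, the functor $f^*$ on spectrum-valued sheaves admits a linear right adjoint) and applies it to $p$ and to~$d$; the proof of that lemma obtains filtered-colimit preservation from \autocite[Remark~7.3.1.5]{LurieHTT} together with stability, and the projection formula from the Beck--Chevalley condition on the cartesian square built from the graph of~$f$ over the diagonal, using \autocite[Example~4.8.1.19]{LurieHA} to identify binary products of $\infty$-toposes with tensor products in~$\Cat{Pr}$ --- exactly the ingredients you name, including the transfer step you flag as the main obstacle.
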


\begin{corollary}\label{600698211c}
  The pair
  \((\Shv_{\Cat{Sp}}(\cat{X}),\Gamma)\)
  is a commutative Frobenius algebra in \(\Cat{Pr}_{\st}\)
  for any proper separated \(\infty\)-topos~\(\cat{X}\).
\end{corollary}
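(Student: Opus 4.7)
The plan is to derive this corollary as an essentially formal consequence of \cref{2bd8a0da2f} together with the paragraph immediately preceding it, which recalls that any rigid presentably symmetric monoidal stable \(\infty\)-category canonically carries the structure of a commutative Frobenius algebra with Frobenius form given by the right adjoint of the unit.

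The first step is to invoke \cref{2bd8a0da2f} to conclude that \(\Shv_{\Cat{Sp}}(\cat{X})\) is rigid in the sense of \cref{95976481a6}. Writing \(u\colon\Cat{Sp}\to\Shv_{\Cat{Sp}}(\cat{X})\) for the unit and \(m\colon\Shv_{\Cat{Sp}}(\cat{X})\otimes\Shv_{\Cat{Sp}}(\cat{X})\to\Shv_{\Cat{Sp}}(\cat{X})\) for the multiplication, rigidity supplies a colimit-preserving right adjoint \(u^{\R}\) and a bilinear right adjoint \(m^{\R}\). As noted in the discussion following \cref{95976481a6}, the pair \((u^{\R}\circ m,\,m^{\R}\circ u)\) then furnishes a self-duality datum in \(\Cat{Pr}_{\st}\). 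Comparing the counit \(u^{\R}\circ m\) with the shape \(l\circ m\) used in \cref{ss-pot} to construct the Frobenius form from a commutative algebra \(A\) and a morphism \(l\colon A\to\unit\), I would set \(l:=u^{\R}\); the resulting pair \((\Shv_{\Cat{Sp}}(\cat{X}),u^{\R})\) is then a commutative Frobenius algebra by the very definition of that notion.

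The final step is to identify \(u^{\R}\) with \(\Gamma\). The unit \(u\) of the symmetric monoidal structure on \(\Shv_{\Cat{Sp}}(\cat{X})\) coincides with the constant sheaf functor \(\Cat{Sp}\to\Shv_{\Cat{Sp}}(\cat{X})\), i.e., the inverse image along the terminal geometric morphism \(\cat{X}\to\Cat{S}\) (tensored into spectra); its right adjoint is by definition the global sections functor \(\Gamma\), so the identification is immediate.

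I do not expect any substantive obstacle in this argument: all of the genuine content is packaged into \cref{2bd8a0da2f}, and the remainder is a formal unwinding of definitions together with the general principle that rigidity implies the Frobenius property.
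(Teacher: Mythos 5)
Your proposal is correct and matches the paper's intent exactly: the paper leaves this corollary without an explicit proof precisely because it follows formally from \cref{2bd8a0da2f} together with the remark preceding it that rigidity makes \((\cat{C},u^{\R})\) a commutative Frobenius algebra, and the identification of \(u^{\R}\) with \(\Gamma\) as the right adjoint of the spectral constant-sheaf functor (the inverse image along \(\cat{X}\to\Cat{S}\) tensored with \(\Cat{Sp}\)) is the only remaining unwinding. You have spelled out all three steps faithfully, so nothing is missing.
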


\begin{proof}[Proof of \cref{2bd8a0da2f}]
  According to \autocite[Example~4.8.1.19]{LurieHA},
  the binary product of \(\infty\)-toposes
  can be computed as their tensor product in \(\Cat{Pr}\).
  Hence the result follows from \cref{90680bda5b} below.
\end{proof}

\begin{lemma}\label{90680bda5b}
  Let \(f\colon\cat{Y}\to\cat{X}\) be a proper morphism
  of \(\infty\)-toposes,
  then \(f^*\colon
  \Shv_{\Cat{Sp}}(\cat{X})\to\Shv_{\Cat{Sp}}(\cat{Y})\)
  admits a \(\Shv_{\Cat{Sp}}(\cat{X})\)-linear right adjoint.
\end{lemma}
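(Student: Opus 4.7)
The right adjoint $f_*$ exists automatically by the adjoint functor theorem, since $f^*$ is a colimit-preserving functor between presentable $\infty$-categories. The substance of the lemma is the projection formula
\[
f_*(M)\otimes N\longrightarrow f_*(M\otimes f^*N)
\]
for $M\in\Shv_{\Cat{Sp}}(\cat{Y})$ and $N\in\Shv_{\Cat{Sp}}(\cat{X})$, which is what $\Shv_{\Cat{Sp}}(\cat{X})$-linearity of $f_*$ unpacks to. My plan is to reduce by a colimit argument to the case $N=\Sigma^\infty_+U$ for $U\in\cat{X}$, and then to invoke proper base change for the étale slice $\cat{X}/U\to\cat{X}$.

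First I would show that for proper $f$ the functor $f_*\colon\Shv_{\Cat{Sp}}(\cat{Y})\to\Shv_{\Cat{Sp}}(\cat{X})$ preserves all small colimits. As a right adjoint it preserves fibers and finite products; in the stable setting these give preservation of cofibers (via $\mathrm{cofib}\simeq\mathrm{fib}[1]$) and of finite coproducts (which agree with finite products), hence of all finite colimits. Preservation of filtered colimits for proper $f$ is the stabilization of the unstable result underlying \autocite[Section~7.3.1]{LurieHTT}. Since every small colimit in a presentable stable $\infty$-category is built from finite and filtered colimits, $f_*$ is colimit-preserving. Both sides of the projection formula are then colimit-preserving in each variable, and we reduce to $M=\Sigma^\infty_+V$ and $N=\Sigma^\infty_+U$ for $V\in\cat{Y}$ and $U\in\cat{X}$.

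For such $N$ the $\Shv_{\Cat{Sp}}(\cat{X})$-action is the copower by $U\in\cat{X}$, which is computed as $p_!p^*(\X)$ for the étale geometric morphism $p\colon\cat{X}/U\to\cat{X}$, and similarly for $q\colon\cat{Y}/f^*U\to\cat{Y}$; these fit into a Cartesian square
\[
\begin{tikzcd}
\cat{Y}/f^*U\ar[r,"q"]\ar[d,"f'"']&\cat{Y}\ar[d,"f"]\\
\cat{X}/U\ar[r,"p"']&\cat{X}
\end{tikzcd}
\]
with induced proper $f'$. Proper base change gives $p^*f_*\simeq f'_*q^*$, so the left side of the projection formula becomes $p_!f'_*q^*M$; the right side is $f_*q_!q^*M$. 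The residual equivalence $p_!f'_*\simeq f_*q_!$ on inputs of the form $q^*M$ is then verified on suspension-spectrum generators, where $q_!\Sigma^\infty_+(W\to f^*U)\simeq\Sigma^\infty_+W$ for étale $q$ reduces it to a direct computation.

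The main obstacle is the colimit-preservation of $f_*$ on spectrum-valued sheaves for proper $f$: the preservation of filtered colimits requires passing from the unstable proper base change theorem to its spectrum-level incarnation, which relies on the interplay between properness and stabilization. Once this is in hand, the reduction to generators and the invocation of proper base change on the slice are formal.
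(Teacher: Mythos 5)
The student's first step (colimit preservation of $f_*$ for proper $f$) and the overall plan to reduce the projection formula to generators match the spirit of the paper, but the endgame has a genuine gap, and in fact properness is never actually used in the student's argument for the projection formula itself.

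The student's step 5 asserts $p^*f_*\simeq f'_*q^*$ ``by proper base change.'' For the slice square with $p,q$ \'etale this equivalence is the right-adjoint mate of the unconditional \'etale base change $q_!f'^*\simeq f^*p_!$, so it holds for any geometric morphism $f$; no properness is consumed here. The properness hypothesis must instead enter at the student's last step, the assertion that $p_!f'_*\simeq f_*q_!$ on inputs of the form $q^*M$, and it is exactly this assertion that is the projection formula at the generator $\Sigma^\infty_+U$. Declaring it ``a direct computation'' after applying $q_!\Sigma^\infty_+(W\to f^*U)\simeq\Sigma^\infty_+W$ is not justified: the remaining identity $p_!f'_*\Sigma^\infty_+(W\to f^*U)\simeq f_*\Sigma^\infty_+W$ is not formal. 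Note in particular that $f_*$ does \emph{not} commute with $\Sigma^\infty_+$ even for proper $f$ (take $f\colon S^1\to\ast$; then $f_*\Sigma^\infty_+\ast=\Gamma(S^1;\SS)\not\simeq\SS=\Sigma^\infty_+ f_*\ast$), and the unstable analogue $p_!f'_*\simeq f_*q_!$ in $\Shv(\cat{X})$ is simply false: unwinding, it would require the unit $U\to f_*f^*U$ to be invertible, which fails already for a proper closed inclusion. So the claimed direct verification on suspension-spectrum generators cannot go through as written. What is actually needed at this point is the content of Lurie's proper base change theorem applied to the closed complement of the \'etale slice (equivalently, the statement that stalks of $f_*$ over the closed stratum are computed on fibers, forcing $f_*q_!q^*M$ to be supported over $U$); after that one compares with $p_!f'_*q^*M$ using the recollement and the already-established \'etale base change. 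None of this is in the proposal.

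By contrast, the paper's proof avoids the reduction to generators and the support argument entirely: it considers the cartesian square formed by the graph $\cat{Y}\to\cat{Y}\times\cat{X}$ over the diagonal $\cat{X}\to\cat{X}\times\cat{X}$, identifies binary products of $\infty$-toposes with tensor products in $\Cat{Pr}$, uses colimit-preservation of $f_*$ to identify $(f\times\id)_*$ with $f_*\otimes\id$, and then applies Beck--Chevalley for $f\times\id$ (a base change of the proper $f$) to obtain the full projection formula in one stroke. If you want to salvage your route, you must replace the ``direct computation'' by a genuine invocation of proper base change over the closed complement; alternatively, adopt the graph-square argument, which is where the properness hypothesis is consumed cleanly.
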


\begin{proof}
  According to \autocite[Remark~7.3.1.5]{LurieHTT},
  the direct image functor
  \(\cat{Y}\to\cat{X}\) preserves filtered colimits.
  Hence \(f_*\colon\Shv_{\Cat{Sp}}(\cat{Y})\to\Shv_{\Cat{Sp}}(\cat{Y})\)
  preserves colimits.
  Now we consider the diagram
  \begin{equation*}
    \begin{tikzcd}[column sep=large]
      \cat{Y}\ar[r,"\text{graph}"]\ar[d,"f"']&
      \cat{Y}\times\cat{X}\ar[r,"\pr_1"]\ar[d,"f\times{\id}"]&
      \cat{Y}\ar[d,"f"]\\
      \cat{X}\ar[r,"\text{diagonal}"]&
      \cat{X}\times\cat{X}\ar[r,"\pr_1"]&
      \cat{X}
    \end{tikzcd}
  \end{equation*}
  in the \(\infty\)-category of \(\infty\)-toposes.
  Since the right and outer squares are cartesian,
  so is the left one.
  According to \autocite[Example~4.8.1.19]{LurieHA},
  the binary product of \(\infty\)-toposes
  can be computed as their tensor product in \(\Cat{Pr}\).
  Therefore,
  since \(f\times{\id}\)
  is Beck--Chevalley,
  for any \(F\in\Shv_{\Cat{Sp}}(\cat{X})\)
  and \(G\in\Shv_{\Cat{Sp}}(\cat{Y})\)
  the canonical morphism
  \(f_*G\otimes F\to f_*(G\otimes f^*F)\) is an equivalence.
\end{proof}

\subsection{The locally compact case}\label{ss-local}

The following
result is
derived from \cref{2bd8a0da2f}
by using \cref{ee41f58df5}
applied to \cref{f7ea8c5d40}:

\begin{theorem}\label{bb6fe1df93}
  Let \(j\colon\cat{U}\hookrightarrow\cat{X}\)
  be an open subtopos
  of a proper separated \(\infty\)-topos.
  Then the pair \((\Shv_{\Cat{Sp}}(\cat{U}),\Gamma_{\cat{X}}\circ j_!)\)
  is a commutative Frobenius algebra
  in \(\Cat{Pr}_{\st}\).
\end{theorem}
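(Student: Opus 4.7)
The plan is to apply \cref{ee41f58df5} with the ambient symmetric monoidal $\infty$-category $\Cat{Pr}_{\st}$, using the Frobenius algebra $(\Shv_{\Cat{Sp}}(\cat{X}),\Gamma_{\cat{X}})$ as the input ``$(A,l)$'' and the adjoint pair $j^{*}\dashv j_{!}$ wait, no: $j_{!}\dashv j^{*}$, as the pair ``$(f,g)$'' with $f=j^{*}$ and $g=j_{!}$. The desired Frobenius structure on $\Shv_{\Cat{Sp}}(\cat{U})$ is then precisely the one produced by the conclusion of \cref{ee41f58df5}, since $\Gamma_{\cat{X}}\circ j_{!}$ is exactly the ``$l\circ g$'' appearing in that statement.

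First I would invoke \cref{2bd8a0da2f} to conclude that $\Shv_{\Cat{Sp}}(\cat{X})$ is rigid, so in particular $(\Shv_{\Cat{Sp}}(\cat{X}),\Gamma_{\cat{X}})$ is a commutative Frobenius algebra in $\Cat{Pr}_{\st}$. Next I would verify that the pair $(j^{*},j_{!})$ satisfies the hypotheses of \cref{00461cedb4} in $\Cat{Pr}_{\st}$: an open subtopos is étale, so the spectrum-valued analogue of \cref{f7ea8c5d40} (obtained by tensoring the space-valued assertion with $\Cat{Sp}$, using that étale pullback is symmetric monoidal and that the projection formula $j_{!}(M\otimes j^{*}N)\simeq j_{!}M\otimes N$ expresses the $\Shv_{\Cat{Sp}}(\cat{X})$-linearity of $j_{!}$) tells us that $j^{*}$ is a morphism of commutative algebras and that $j_{!}$ is $\Shv_{\Cat{Sp}}(\cat{X})$-linear. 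Finally, since $j_{!}$ is fully faithful for an open immersion, the composite $j^{*}\circ j_{!}$ is equivalent to the identity on $\Shv_{\Cat{Sp}}(\cat{U})$, which is the remaining hypothesis of \cref{ee41f58df5}.

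Assembling these inputs into \cref{ee41f58df5} immediately yields that $(\Shv_{\Cat{Sp}}(\cat{U}),\Gamma_{\cat{X}}\circ j_{!})$ is a commutative Frobenius algebra in $\Cat{Pr}_{\st}$, as claimed. The only point requiring any care is the passage from the space-valued statement of \cref{f7ea8c5d40} to its spectrum-valued counterpart, but this is routine because $\Shv_{\Cat{Sp}}(\X)\simeq\Cat{Sp}\otimes\Shv(\X)$ in $\Cat{Pr}$ and the $(j_{!},j^{*})$ adjunction with its symmetric-monoidal and linearity properties is stable under this base change.
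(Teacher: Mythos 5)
Your proof is correct and follows precisely the paper's own approach: the paper derives the theorem by applying \cref{ee41f58df5} to \cref{f7ea8c5d40} with \cref{2bd8a0da2f} as input, which is exactly the decomposition you use (with $f=j^*$, $g=j_!$, and $(A,l)=(\Shv_{\Cat{Sp}}(\cat{X}),\Gamma_{\cat{X}})$). You also correctly flag and handle the one point the paper leaves implicit, namely the passage from the space-valued assertion of \cref{f7ea8c5d40} in $\Cat{Pr}$ to its spectrum-valued counterpart in $\Cat{Pr}_{\st}$ via base change along $\Cat{Sp}\otimes(\X)$.
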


Here the composite \(\Gamma_{\cat{X}}\circ j_!\)
depends on~\(j\),
not only on~\(\cat{U}\),
but there is a canonical choice
for locally compact spaces:

\begin{definition}\label{02c27cb552}
  Let \(X\) be a locally compact Hausdorff space.
  We define
  the \emph{global section with compact support}
  \(\Gamma_{\cpt}\) as the composite \(p_*\circ j_!\)
  where \(j\colon X\hookrightarrow X_{\infty}\)
  is the inclusion to its one-point compactification
  and \(p\colon X_{\infty}\to{*}\) is the projection.
  Then \cref{bb6fe1df93} says that
  \((\Shv_{\Cat{Sp}}(X),\Gamma_{\cpt})\)
  is a commutative Frobenius algebra.
  We let \(\DD\colon\Shv(X)\to\cShv(X)\)
  denote the associated equivalence
  (cf. \cref{ss-pot}).
\end{definition}

\begin{remark}\label{12b6445923}
  One can prove Verdier duality for
  locally compact Hausdorff spaces
  by a similar method
  to the one we have used in \cref{ss-lf}
  for locally finite posets:
  Namely, the sheaf and cosheaf \(\infty\)-categories
  of a locally compact Hausdorff space
  can be written as colimits in \(\Cat{Pr}\)
  as those of compact subspaces.
  We leave the details to the interested reader.
\end{remark}

We give its objectwise description
to justify calling our functor ``Verdier duality'':

\begin{proposition}\label{bdd139fe5d}
  For a locally compact Hausdorff space~\(X\)
  and a spectrum-valued sheaf \(F\in\Shv_{\Cat{Sp}}(X)\),
  the cosheaf \(\DD(F)\)
  is pointwise given by
  \(U\mapsto
  \injlim_{K\subset U}\fib(F(X)\to F(X\setminus K))\),
  where \(K\) runs over compact subsets.
\end{proposition}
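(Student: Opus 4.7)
My plan is to unwind the definition of $\DD$ from \cref{ss-pot} and match it with the claimed colimit. Write $j\colon U\hookrightarrow X$ for the open inclusion and $\SS_U$ for the constant sheaf of spectra with value~$\SS$ on~$U$. From the Frobenius structure $(\Shv_{\Cat{Sp}}(X),\Gamma_{\cpt})$ we read off that $\DD(F)\in[\Shv_{\Cat{Sp}}(X),\Cat{Sp}]$ is the colimit-preserving functor $G\mapsto\Gamma_{\cpt}(F\otimes G)$. Via the chain of equivalences $\cShv_{\Cat{Sp}}(X)\simeq[\Shv(X),\Cat{Sp}]\simeq[\Shv_{\Cat{Sp}}(X),\Cat{Sp}]$, the value of the associated cosheaf at~$U$ is extracted by evaluating this functor at the image of the representable subterminal sheaf $y(U)\in\Shv(X)$ under the unit $\Shv(X)\to\Shv_{\Cat{Sp}}(X)$, which is precisely $j_!\SS_U$.

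Next I would apply the projection formula for the étale geometric morphism $j$ to obtain $F\otimes j_!\SS_U\simeq j_!j^*F$, so that $\DD(F)(U)\simeq\Gamma_{\cpt}(X;j_!j^*F)$. Using $\Gamma_{\cpt}=p_*\circ k_!$ for $k\colon X\hookrightarrow X_\infty$ and the compatibility $k_!\circ j_!\simeq(k\circ j)_!$, this reduces to $\Gamma(X_\infty;(k\circ j)_!F|_U)$. Identifying this with $\Gamma_{\cpt}(U;F|_U)$ requires comparing the two compactifications $X_\infty$ and $U_\infty$ of~$U$; this can be done via the canonical pointed quotient $X_\infty\to U_\infty$ that collapses $X_\infty\setminus U$ to the basepoint, using that the extension by zero along $U\hookrightarrow X_\infty$ is the pullback along this quotient of the extension by zero along $U\hookrightarrow U_\infty$, so that taking global sections on either side yields the same spectrum.

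Finally, I would express $\Gamma_{\cpt}(U;F|_U)$ as the colimit over compact subsets $K\subset U$ of the sections of $F|_U$ supported on~$K$; this is the standard presentation of compactly supported cohomology obtained by filtering $U_\infty$ by the closed neighborhoods of~$\infty$, whose complements are exactly the compact subsets of~$U$. For compact $K\subset U$, which is closed in~$X$, the sections supported on~$K$ are identified with $\fib(F(X)\to F(X\setminus K))$, giving the claimed formula. The main obstacle is the comparison between the two compactifications in the second paragraph; once this is cleanly handled, the remaining steps are formal consequences of the projection formula and the colimit presentation of compactly supported cohomology.
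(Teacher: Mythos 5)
Your proof is correct but follows a genuinely different route from the paper's. The paper first handles the case where $X$ is compact: there $\Gamma_{\cpt}=\Gamma$, one writes $(j_!\SS_U)\otimes F\simeq\fib(F\to i_*i^*F)$ via the recollement at the closed complement $i\colon X\setminus U\hookrightarrow X$, computes the stalk $(i^*F)(X\setminus U)$ as the filtered colimit $\injlim_{V\supset X\setminus U}F(V)$ over open neighborhoods, and then replaces complements of opens by compacts. The general case is then obtained by applying \cref{00461cedb4} together with \cref{f7ea8c5d40} to get $j_+\circ\DD_X\simeq\DD_{X_\infty}\circ j_!$ for the inclusion $j\colon X\hookrightarrow X_\infty$, followed by the observation that the stalk of $j_*F$ at~$\infty$ is unaffected by removing a compact $K\subset X$. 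You instead pass uniformly to the intrinsic identification $\DD(F)(U)\simeq\Gamma_{\cpt}(U;F|_U)$ by comparing compactifications through the quotient $q\colon X_\infty\to U_\infty$ and the equality $q^*\circ l_!\simeq(k\circ j)_!$ (which indeed holds: $q\circ k\circ j=l$, so both are left adjoint to restriction to~$U$), and then unwind $\Gamma_{\cpt}(U;-)$ as the colimit over compacts, finishing with excision. Your route is more uniform—no compact/noncompact case split—and isolates the conceptually pleasing statement that $\DD(F)(U)$ is precisely the compactly supported cohomology of~$U$; its cost is that you must supply the compactification comparison by hand (including the nontrivial continuity check for~$q$ and the adjunction argument), whereas the paper extracts the needed compatibility directly from the general functoriality machinery of \cref{ss-pot} and so never needs to exhibit $U_\infty$ at all.
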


Note that
Lurie's equivalence also has this pointwise formula;
see \autocite[Proposition~5.5.5.10]{LurieHA}.

\begin{proof}
  First suppose that \(X\) is compact.
  Let \(j\) denote the inclusion
  \(U\hookrightarrow X\).
  By definition,
  \(\DD(F)(U)\) is the global section
  of \((j_!\SS_U)\otimes F\).
  Let \(i\) denote the inclusion \(X\setminus U\hookrightarrow X\).
  Then by recollement,
  \(\DD(F)(U)\)
  is equivalent to the global section of \(\fib(F\to i_*i^*F)\).
  Hence it is written as
  \(\injlim_{V\supset X\setminus U}
  \fib(F(X)\to F(V))\),
  where \(V\) runs over open subsets.
  As \(X\) is compact, this
  coincides with the desired description.

  We proceed to the general case.
  Let \(j\colon X\hookrightarrow X_{\infty}\)
  denote the inclusion to
  the one-point compactification
  and \(i\) the inclusion of the point at infinity.
  \Cref{00461cedb4} applied to \cref{f7ea8c5d40}
  says \(j_+\circ\DD_X\simeq\DD_{X_{\infty}}\circ j_!\).
  Hence \(\DD_X(F)(U)\) can be computed as
  \begin{equation*}
    \phantom,
    (j_+\circ\DD_X)(F)(U)
    \simeq(\DD_{X_{\infty}}\circ j_!)(F)(U)
    \simeq\injlim_{K\subset U}
    \fib\bigl((j_!F)(X_{\infty})\to(j_!F)(X_{\infty}\setminus K)\bigr),
  \end{equation*}
  where we use the compact case.
  By recollement,
  the desired result follows from
  the vanishing of
  \(\fib((i_*i^*j_*F)(X_{\infty})
  \to(i_*i^*j_*F)(X_{\infty}\setminus K))\)
  for each \(K\),
  which follows from \(K\subset X\).
\end{proof}

\begin{remark}\label{0f134964dc}
  Let \(f\colon Y\to X\) be a continuous map
  between locally compact Hausdorff spaces.
  As in
  \autocite[Remark~9.4.6]{GaitsgoryLurie},
  we can define the lower shriek functor~\(f_!\)
  as the composite \((\DD_X)^{-1}\circ f_+\circ\DD_Y\).
  One could
  check its standard properties by
  applying \cref{00461cedb4} to \cref{68011c91d6,f7ea8c5d40}.
  To describe further
  functorial properties
  of this construction, one could use the technology presented
  in \autocite[Chapter~7]{GaitsgoryRozenblyum171}.
  However, beware that it is built on unproven results
  in \((\infty,2)\)-category theory.
\end{remark}

\subsection{Application: Verdier duality and stratification}\label{ss-str}

We prove the following generalization of \cref{str}:

\begin{theorem}\label{a292d59cdb}
  Let \(P\) be a finite poset
  and \(\cat{X}\to\Shv(\Alex(P))\)
  a geometric morphism.
  Suppose that \(P\) is Verdier,
  that \(\cat{X}\) is proper and separated,
  and that
  the spectrum-valued inverse image
  \(f^*\colon\Shv_{\Cat{Sp}}(\Alex(P))\to\Shv_{\Cat{Sp}}(X)\)
  is fully faithful.
  Then we have \(\DD_P\simeq f_+\circ\DD_X\circ f^*\).
\end{theorem}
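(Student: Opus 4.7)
The plan is to exploit the naturality of the Frobenius--duality functor provided by \cref{e894284df5}, applied to the morphism of commutative algebras \(f^*\). Set \(A=\Shv_{\Cat{Sp}}(\Alex(P))\simeq\Fun(P,\Cat{Sp})\) and \(B=\Shv_{\Cat{Sp}}(\cat{X})\). The Verdier hypothesis on \(P\) together with \cref{600698211c} (applicable because \(\cat{X}\) is proper and separated) makes both \((A,\Gamma_P)\) and \((B,\Gamma_X)\) into commutative Frobenius algebras in \(\Cat{Pr}_{\st}\), with associated duality equivalences \(\DD_P\) and \(\DD_X\) as constructed in \cref{ss-pot}.

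First I would verify the hypotheses of \cref{e894284df5}. The functor \(f^*\) is a morphism of commutative algebras, being the inverse image of a geometric morphism and hence symmetric monoidal. The assumed fully-faithfulness of \(f^*\) yields \(f_*\circ f^*\simeq\id_A\), whence
\begin{equation*}
\Gamma_X\circ f^*\simeq\Gamma_P\circ f_*\circ f^*\simeq\Gamma_P.
\end{equation*}
Thus \cref{e894284df5} applies with \(f=f^*\) and \(m=\Gamma_X\). Specializing the resulting internal-Hom square to \(C=A\) and \(D=\unit\) and chasing \(\id_A\in[A,A]\) through it, the top horizontal sends \(\id_A\) to \(\DD_P\in[A,A^{\vee}]\) (under \([A\otimes A,\unit]\simeq[A,A^{\vee}]\)), while going down-across-up sends \(\id_A\) to \(f^*\), then to \(\DD_X\circ f^*\in[A,B^{\vee}]\), then to \((f^*)^{\vee}\circ\DD_X\circ f^*\in[A,A^{\vee}]\). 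Commutativity of the square therefore gives \(\DD_P\simeq(f^*)^{\vee}\circ\DD_X\circ f^*\).

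The final step is to identify \((f^*)^{\vee}\colon B^{\vee}\to A^{\vee}\) with the cosheaf pushforward \(f_+\), under the canonical equivalence \(\cShv_{\Cat{Sp}}(\cat{Y})=[\cat{Y},\Cat{Sp}]\simeq[\Shv_{\Cat{Sp}}(\cat{Y}),\Cat{Sp}]=\Shv_{\Cat{Sp}}(\cat{Y})^{\vee}\) that sends a cosheaf \(\psi\) to its unique colimit-preserving extension \(\tilde\psi\). A direct computation gives \((f^*)^{\vee}\tilde\psi=\tilde\psi\circ f^*\), whose restriction back to the generating \(\infty\)-topos is \(V\mapsto\psi(f^*V)=f_+\psi(V)\). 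I expect the main obstacle to lie in this last bookkeeping with the \(\cShv\)-versus-dual identification, rather than in any conceptual subtlety, since \cref{e894284df5} is precisely tailored to the naturality we need and the proper/separated hypothesis enters only to furnish the Frobenius structure on \(B\).
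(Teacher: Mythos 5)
Your proof is correct and follows essentially the same route as the paper: use fully faithfulness of \(f^*\) to deduce \(\Gamma_X\circ f^*\simeq\Gamma_{\Alex(P)}\), then invoke the naturality lemma~\cref{e894284df5}. The paper leaves the diagram-chasing entirely implicit ("the desired result follows from \cref{e894284df5}"), whereas you spell it out; your choice of \(C=A\), \(D=\unit\) and chasing \(\id_A\) is a trivially different but equally valid specialization of the lemma compared with the more direct \(C=D=\unit\), which yields the square \(A\to[A,\unit]\), \(B\to[B,\unit]\) and hence the claimed identity without any chasing.
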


\begin{remark}\label{7014e1bd6e}
  The assumption is satisfied
  when the \emph{space-valued} inverse image
  \(\Shv(\Alex(P))\to\cat{X}\)
  is fully faithful:
  This can be seen
  by considering
  objects of \(\Shv_{\Cat{Sp}}(\X)\)
  as
  left exact functors \((\Cat{Sp}^{\omega})^{\op}\to\X\).
\end{remark}

\begin{proof}[Proof of \cref{a292d59cdb}]
  We have
  \(\Gamma_{\Alex(P)}
  \simeq\Gamma_{\Alex(P)}\circ f_*\circ f^*
  \simeq\Gamma_{X}\circ f^*\).
  Hence the desired result follows from \cref{e894284df5}.
\end{proof}

\bibliographystyle{plain}
\bibliography{references.bib}

\end{document}